\newtheorem{thm}{Theorem}[section]
\newtheorem{lema}[thm]{Lemma}
\newtheorem{cor}[thm]{Corollary}
\newtheorem{prop}[thm]{Proposition}
\theoremstyle{definition}
\newtheorem{defi}[thm]{Definition} 
\newtheorem{rmk}[thm]{Remark}
\newcommand{\D}{\mathbb{D}}
\newcommand{\T}{\mathbb{T}}
\newcommand{\R}{\mathbb{R}}
\newcommand{\Z}{\mathbb{Z}}
\newcommand{\C}{\mathbb{C}}
\newcommand{\EC}{\overline{\mathbb{C}}}
\newcommand{\MM}{\mathcal{M}}
\newcommand{\A}{\mathcal{A}}
\newcommand{\MH}{\mathcal{H}}
\newcommand{\MO}{\mathcal{O}}
\newcommand{\Crit}{\textup{Crit}}
\newcommand{\Teich}{\textup{Teich}}
\newcommand{\Mod}{\textup{Mod}}
\makeatletter\@addtoreset{equation}{section}\makeatother 
\titleformat{\section}{\centering\normalsize}{\textsc{\thesection.}}{1em}{\textsc}
\titleformat{\subsection}{\normalsize}{\thesubsection.}{1em}{\textbf}
\begin{document}

\author{FEI YANG}
\address{Fei Yang, Departments of Mathematics, Nanjing University, Nanjing, 210093, P. R. China}
\email{yangfei\rule[-2pt]{0.2cm}{0.5pt}math@163.com}

\author{JINSONG ZENG}
\address{Jinsong Zeng, School of Mathematical Sciences, Fudan University, Shanghai, 200433, P. R. China}
\email{10110180006@fudan.edu.cn}

\title{ON THE DYNAMICS OF A FAMILY OF GENERATED RENORMALIZATION TRANSFORMATIONS}

\begin{abstract}
We study the family of renormalization transformations of the generalized $d$--dimensional diamond hierarchical Potts model in statistical mechanic and prove that their Julia sets and non-escaping loci are always connected, where $d\geq 2$. In particular, we prove that their Julia sets can never be a Sierpi\'{n}ski carpet if the parameter is real. We show that the Julia set is a quasicircle if and only if the parameter lies in the unbounded capture domain of these models. Moreover, the asymptotic formula of the Hausdorff dimension of the Julia set is calculated as the parameter tends to infinity.
\end{abstract}

\subjclass[2010]{Primary 37F45; Secondary 37F25, 37F35}

\keywords{Julia sets; renormalization transformations; Hausdorff dimension}

\date{\today}



\maketitle


\section{Introduction}

The statistical mechanical models on hierarchical lattices have attracted many interests recently since they exhibit a deep connection between their limiting sets of the zeros of the partition functions and the Julia sets of rational maps in complex dynamics \cite{BL,BLR1,BLR2,DDI,Qi,QL,QYG}. A celebrated Lee-Yang theorem \cite{LY,YL} in statistical mechanics asserts that the zeros of the partition function for some magnetic materials lie on the unit circle in the complex plane, which is corresponding to a purely imaginary magnetic field. This means that the complex singularities of the free energy lie on this line, where the free energy is the logarithm of the partition function.

The partition function $\textup{\textsf{Z}}=\textup{\textsf{Z}}(z,\textup{t})$ can be written as a Laurent polynomial in two variables $z$ and $\textup{t}$, where $z$ is a `field-like' variable and $\textup{t}$ is `temperature-like'. Note that the complex zeros of $\textup{\textsf{Z}}(z,\textup{t})$ in $z$ are called the \textit{Lee-Yang zeros} for a fixed $\textup{t}\in[0,1]$. Naturally, one can study the zeros of $\textup{\textsf{Z}}(z,\textup{t})$ in the $\textup{t}$-variable. These zeros are called \textit{Fisher zeros} since they were first studied by Fisher for regular two-dimensional lattice \cite{Fi,BK}. However, compared with the Lee-Yang zeros, Fisher zeros do not lie on the unit circle any more. For example, for the regular two-dimensional lattice, the Fisher zeros lie on the union of two circles $|\textup{t}\pm 1|=\sqrt{2}$. For more comprehensive introduction on Lee-Yang zeros and Fisher zeros, see \cite{BLR2} and the references therein.

In 1983, Derrida, de Seze and Itzykson showed that the Fisher circles of the Ising model on the regular two-dimensional lattice $\mathbb{Z}^2$ become a fractal Julia set upon replacing $\mathbb{Z}^2$ by a hierarchical lattice \cite{DDI}. They proved that the corresponding singularities of the free energy lie on the Julia set of the rational map
\begin{equation}
z\mapsto \left(\frac{z^2+\lambda-1}{2z+\lambda-2}\right)^2.
\end{equation}
This means that the distribution of the singularities of the free energy can have a pretty wild geometry. Henceforth, a lot of works related on the Julia sets of this renormalization transformation appeared (see \cite{AY,BL,Ga,HL,Qi,QL,QYG,WQYQG} and references therein). For the ideas formulated in renormalization transformation in statistical mechanics, see \cite{Wi}.

Recently, Qiao considered the generalized diamond hierarchical Potts model and proved that the family of rational maps
\begin{equation}\label{Umn}
U_{mn\lambda}(z) =\left(\frac{(z+\lambda-1)^m+(\lambda-1)(z-1)^m}{(z+\lambda-1)^m-(z-1)^m}\right)^n
\end{equation}
are actually the renormalization transformation of the \emph{generalized diamond hierarchical Potts model} \cite[Theorem 1.1]{Qi}, where $m,n\geq 2$ are both integers and $\lambda\in\C^*:=\C\setminus\{0\}$ is a complex parameter.
The standard diamond lattice ($m = n = 2$) and the \emph{diamond-like lattice} ($m = 2$ and
$n \in \mathbb{N}$) are the special cases of \eqref{Umn}.

In this paper, we will consider the case for $d:=m = n\geq 2$. For simplicity, we use $U_{d\lambda}$ to denote $U_{dd\lambda}$ in \eqref{Umn}. We not only study the topological properties of the Julia sets of $U_{d\lambda}$, but also consider the connectivity of the non-escaping locus of the parameter space of this renormalization transformation.

If $\lambda=0$, then $U_{d\lambda}$ degenerates to a parabolic polynomial $U_{d0}(z)=(\frac{z+d-1}{d})^d$ whose Julia set is a Jordan curve. For the connectivity of the Julia sets of $U_{d\lambda}$, we have the following theorem.

\begin{thm}\label{J-connected}
The Julia set of $U_{d\lambda}$ is always connected for every $d\geq 2$ and $\lambda\in\C^*$.
\end{thm}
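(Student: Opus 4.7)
My plan is to show that every Fatou component of $U_{d\lambda}$ is simply connected; this is equivalent to the connectedness of $J(U_{d\lambda})$. The key structural input is a uniform critical-point computation. Writing $U_{d\lambda} = P \circ \phi$ with $P(w) = w^d$ and $\phi$ the degree-$d$ rational function appearing in the definition of $U_{d\lambda}$, a direct derivative computation shows that the numerator of $\phi'$ factors as a nonzero multiple of $(z-1)^{d-1}(z+\lambda-1)^{d-1}$, so $\phi$ has exactly two critical points $z=1$ and $z=1-\lambda$, both of local degree $d$. Combining with $\phi^{-1}(0)$ and $\phi^{-1}(\infty)$, the critical set of $U_{d\lambda}$ consists of $\infty$, $1$, $1-\lambda$, the $d$ zeros of $\phi$, and the $d-1$ finite poles of $\phi$, and crucially every one of these has local degree exactly $d$. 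The critical values are $\{\infty,\,1,\,(1-\lambda)^d,\,0\}$, and both $\infty$ and $1$ are super-attracting fixed points of local degree $d$.

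Next I would apply Riemann--Hurwitz to periodic Fatou components. Denote by $U^*_\infty$ the immediate basin of $\infty$, and let $j$ be the number of finite poles of $\phi$ contained in $U^*_\infty$ and $a$ the number of other critical points in $U^*_\infty$. Then $U_{d\lambda}$ restricted to $U^*_\infty$ has degree $(1+j)d$ (counting preimages of $\infty$ in $U^*_\infty$, each of local degree $d$) and critical multiplicity $(1+j+a)(d-1)$, so Riemann--Hurwitz gives
\[
\bigl((1+j)d - 1\bigr)\,\chi(U^*_\infty) \;=\; (1+j+a)(d-1).
\]
The right-hand side is strictly positive and $\chi(U^*_\infty)\le 1$ for a planar domain, so the only integer solution is $\chi(U^*_\infty) = 1$; hence $U^*_\infty$ is simply connected (with $(j,a)\in\{(0,0),(d-1,1)\}$). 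The same argument applied to the immediate basin of $1$, and more generally to the first-return map on any periodic cycle of Fatou components, forces $\chi = 1$ on each periodic component and thereby also rules out Herman rings.

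For preperiodic Fatou components, I would proceed by pull-back from the now-known simply connected periodic components. If $V$ is a Fatou component with $U_{d\lambda}(V) = W$ already simply connected and $V$ contains $m$ critical points, Riemann--Hurwitz reads $\chi(V) = k - m(d-1)$, where $k$ is the degree of $V\to W$. Using the uniform local degree $d$ at every critical point together with the fact that $k$ equals the sum of local degrees over the preimages in $V$ of any point of $W$, a bookkeeping argument tracking preimages of critical values in $W$ against the critical points in $V$ yields the relation $k = 1 + m(d-1)$, whence $\chi(V) = 1$. Iterating this pullback exhausts every Fatou component, completing the proof. The main obstacle is precisely this preperiodic pullback: the enabling observation throughout is the uniformity that every critical point of $U_{d\lambda}$ has local degree exactly $d$, which constrains the possible Riemann--Hurwitz equations at each step to admit only $\chi = 1$ as an integer solution.
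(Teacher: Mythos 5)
Your Riemann--Hurwitz framework is sound for periodic components that contain critical points, but there are two genuine gaps. The first is the claim that your argument ``thereby also rules out Herman rings.'' It does not: the first-return map on a cycle of Herman rings has degree $1$ and its components contain no critical points, so Riemann--Hurwitz reads $(1-1)\chi(V)=0$ and gives no information whatsoever; your inequality ``RHS strictly positive forces $\chi=1$'' only applies when the component contains a critical point. Since a Herman ring would make the Julia set disconnected, this cannot be skipped. The paper has to invoke a separate, nontrivial result (Lemma~\ref{no-Herman}, proved elsewhere by quasiconformal surgery) to exclude Herman rings; no counting argument replaces it, because $U_{d\lambda}$ does have a free critical orbit that could a priori feed both boundary components of a ring.

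The second gap is the preperiodic pull-back. You assert that ``a bookkeeping argument \ldots yields $k=1+m(d-1)$,'' but this identity is exactly what needs proving and it is false without an additional structural input. If a Fatou component $W$ contains \emph{two} distinct critical values of $U_{d\lambda}$ (the critical values are $1,\infty,0,(1-\lambda)^d$, and e.g.\ $0$ and $(1-\lambda)^d$, or $1$ and $0$, can lie in the same component for suitable $\lambda$), then a preimage component $V$ containing one critical point over each value satisfies only $k\ge d$, giving $\chi(V)=k-2(d-1)\ge 2-d$, which does not exclude an annulus when $d=2$ (or worse for larger $d$). What makes the induction work is the fact that each Fatou component contains at most one critical value of the map being iterated; the paper obtains this by (i) replacing $U_{d\lambda}$ by its ``square root'' $T_{d\lambda}$ (same Julia set, but only two critical points $1$ and $1-\lambda$ and two critical values $\infty$ and $0$), and (ii) applying Milnor's dichotomy: if both critical values lay in one Fatou component the Julia set would be totally disconnected, contradicting the superattracting $2$-cycle $1\leftrightarrow\infty$. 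With that in hand, the pull-back step is the clean covering-space Lemma~\ref{lema-covering} rather than a degree count. You should also note that applying Riemann--Hurwitz directly to an immediate basin presupposes finite connectivity; the paper sidesteps this by exhausting the basin with pullbacks $V_k$ of a small disk or petal. Finally, your ``uniform local degree $d$'' claim degenerates at $\lambda=1$, where $1-\lambda=0$ is a fixed critical point and the $\omega_k$ collapse.
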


Note that Qiao and Li proved that the Julia set of $U_{d\lambda}$ is connected for $d= 2$ and $\lambda\in \R$ \cite{QL}. We would like to remark that if $m\neq n$, then there exists parameter $\lambda\in\C^*$ such the Julia set of $U_{mn\lambda}$ defined in \eqref{Umn} is disconnected (see \cite[Figure 3.1]{Qi} for example).

Let $\overline{\mathbb{C}}=\mathbb{C}\cup\{0\}$ be the Riemann sphere. According to \cite{Wh}, a connected and locally connected compact set $S$ in $\EC$ is called a \emph{Sierpi\'{n}ski carpet} if it has empty interior and can be written as $S=\overline{\mathbb{C}}\setminus \bigcup_{i\in\mathbb{N}}D_i$, where $\{D_i\}_{i\in\mathbb{N}}$ are Jordan regions satisfying $\partial D_i\cap\partial D_j=\emptyset$ for $i\neq j$ and the spherical diameter $\text{diam}(\partial D_i)\rightarrow 0$ as $i\rightarrow \infty$.

The first example of the Sierpi\'{n}ski carpet as the Julia set of a rational map was given in \cite[Appendix F]{Mi1}. Afterwards, many families of the rational maps serve the examples such that their Julia sets are Sierpi\'{n}ski carpets for suitable parameters. See \cite{DLU} for the family of McMullen maps and \cite{XQY} for generated McMullen maps. However, for the  renormalization transformation $U_{d\lambda}$, we have the following theorem.

\begin{thm}\label{thm-no-sier}
For $d\geq 2$ and $\lambda\in\R$, the Julia set of $U_{d\lambda}$ is not a Sierpi\'{n}ski carpet.
\end{thm}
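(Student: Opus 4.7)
The strategy is to exhibit two distinct Fatou components of $U_{d\lambda}$ whose boundaries share a common point, in direct contradiction with the condition $\partial D_i\cap\partial D_j=\emptyset$ in the definition of Sierpi\'{n}ski carpet recalled above. The natural candidates are the immediate basins $A_1$ and $A_\infty$ of the two superattracting fixed points. Expanding \eqref{Umn} at infinity gives $U_{d\lambda}(z)\sim z^d/d^d$, so $\infty$ is superattracting of local degree $d$; writing $U_{d\lambda}=F^d$ with $F(z)=\frac{(z+\lambda-1)^d+(\lambda-1)(z-1)^d}{(z+\lambda-1)^d-(z-1)^d}$, the identity $F(z)-1=\lambda(z-1)^d/D(z)$ (where $D$ is the denominator of $F$) yields $U_{d\lambda}(z)-1\sim\frac{d}{\lambda^{d-1}}(z-1)^d$ near $z=1$, so $1$ is also superattracting of local degree $d$.

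Since $\lambda\in\R$, complex conjugation commutes with $U_{d\lambda}$, so $J(U_{d\lambda})$, $A_1$, and $A_\infty$ are all invariant under $z\mapsto\bar z$. The plan is then to carry out a one-dimensional analysis on $\R$. A direct computation gives
\[
F'(z)=\frac{d\lambda^2\,(z+\lambda-1)^{d-1}(z-1)^{d-1}}{D(z)^2},
\]
showing that the only critical points of $F$ are $z=1$ and $z=1-\lambda$, each of multiplicity $d-1$. Taking $\lambda>0$ first: on $(1,+\infty)$ the function $F$ has no pole, is strictly increasing, and satisfies $F(x)<x$ (one checks from $N(x)-xD(x)=(1-x)(x+\lambda-1)^d+(\lambda-1+x)(x-1)^d$ that $F$ has no real fixed point in $(1,+\infty)$ other than the boundary value $1$). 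Consequently $U_{d\lambda}(x)-x$ is negative just above $1$ and tends to $+\infty$ as $x\to\infty$, so real fixed points of $U_{d\lambda}$ in $(1,+\infty)$ exist.

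Let $p^{**}$ and $p^{*}$ be respectively the smallest and the largest of these real fixed points. On $(1,p^{**})$ the iterates of $U_{d\lambda}$ are monotone decreasing and bounded below by $1$, hence converge to $1$, so $(1,p^{**})\subset A_1$ and $p^{**}\in\partial A_1$. Symmetrically, on $(p^{*},+\infty)$ the iterates are monotone increasing and escape to $\infty$, so $(p^{*},+\infty)\subset A_\infty$ and $p^{*}\in\partial A_\infty$. If one shows $p^{*}=p^{**}$, then this common point lies on $\partial A_1\cap\partial A_\infty$, contradicting the Sierpi\'{n}ski carpet condition. The case $\lambda<0$ would be treated by the same scheme on an appropriate real interval bridging $A_1$ and $A_\infty$, with care taken around the real pole of $U_{d\lambda}$ at $x=1-\lambda/2$ that appears only when $d$ is even.

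The main obstacle is establishing the equality $p^{*}=p^{**}$, that is, that $U_{d\lambda}(x)-x=F(x)^d-x$ has a unique zero on $(1,+\infty)$. \emph{A priori} there could be several real fixed points, and then the intervals between consecutive ones might be absorbed by basins of intermediate attracting periodic cycles rather than by $A_1$ or $A_\infty$, breaking the monotone-orbit argument. Overcoming this demands either a direct monotonicity or convexity argument for $F(x)^d-x$ exploiting the explicit structure of $F$, or a global critical-orbit analysis for $\lambda\in\R$ showing that all critical points of $U_{d\lambda}$ (namely $z=1$, $z=1-\lambda$, the zeros and poles of $F$, and $\infty$) eventually enter $A_1\cup A_\infty$ under iteration, thereby ruling out additional attracting cycles. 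Either route requires a case-by-case analysis across the sign of $\lambda$ and the parity of $d$, and will form the technical core of the proof.
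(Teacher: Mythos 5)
Your overall strategy --- exhibit two Fatou components whose closures meet, contradicting the disjointness of the boundaries $\partial D_i$ in the carpet definition, and locate the meeting point on the real axis using the symmetry coming from $\lambda\in\R$ --- is exactly the paper's (Lemma \ref{lema-non-empty}). But your execution hinges on the equality $p^{*}=p^{**}$, i.e.\ on $U_{d\lambda}$ having a \emph{unique} fixed point in $(1,+\infty)$, and you correctly flag that you cannot prove it. This is a genuine gap, and the claim is not one you should expect to hold in general: since $U_{d\lambda}$ is increasing on $[1,+\infty)$ with $U_{d\lambda}(x)<x$ just above $1$ and $U_{d\lambda}(x)>x$ near $+\infty$, any real parameter for which $U_{d\lambda}$ has an attracting fixed point in $(1,+\infty)$ forces at least three fixed points there, and nothing in your argument excludes such parameters. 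Neither of your proposed repairs works in that regime: $F(x)^d-x$ is then genuinely non-monotone between consecutive fixed points, and the free critical orbit does \emph{not} enter $A_1\cup A_\infty$.

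The fix is to drop the insistence that the two touching components be $A_1$ and $A_\infty$. Let $x_n$ be the \emph{largest} fixed point in $[1,+\infty)$; then $(x_n,+\infty)\subset\mathcal{A}_{d\lambda}(\infty)$, so $x_n\in\partial\mathcal{A}_{d\lambda}(\infty)$, and necessarily $U_{d\lambda}'(x_n)\geq 1$. If $U_{d\lambda}'(x_n)=1$, then $x_n$ is parabolic and its immediate basin contains an interval $(x_n-\epsilon,x_n)$; if $U_{d\lambda}'(x_n)>1$, then $U_{d\lambda}(x)<x$ on $(x_{n-1},x_n)$, monotonicity pushes every orbit in $[x_{n-1},x_n)$ down to $x_{n-1}$, and $x_n$ lies on the boundary of the immediate basin of $x_{n-1}$ (which is $A_1$ only when $n=1$). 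Either way $x_n$ is in the closure of two distinct Fatou components, with no uniqueness statement needed. The case $\lambda<0$ is handled on $[0,1]$ rather than near the pole you worry about: if there is no fixed point in $(0,1)$ then $0\in\mathcal{A}_{d\lambda}(1)$, the Julia set is a quasicircle by Lemma \ref{equiv-condi}, and the two basins share it as their common boundary; otherwise one repeats the largest-fixed-point argument on $(0,1)$. ($\lambda=0$ is the degenerate parabolic polynomial, whose Julia set is a Jordan curve.)
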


The proof of Theorem \ref{thm-no-sier} is based on proving the intersection of the boundaries of two of the Fatou components of $U_{d\lambda}$ are always non-empty (see Lemma \ref{lema-non-empty} and Theorem \ref{thm-no-sier-resta}).

The \emph{Mandelbrot set} of quadratic polynomials $f_c(z)=z^2+c$ is defined by
\begin{equation*}
M=\{c\in\C: f_c^{\circ n}(0)\not\rightarrow\infty \text{~as~} n\rightarrow\infty\}.
\end{equation*}
Douady and Hubbard showed that $M$ is connected \cite{DH}. For higher degree polynomials with only one critical point, there are associated \emph{Multibrot sets}.
For rational maps, one way to study the parameter space is to consider the \emph{connectedness locus}, which consists of all parameters such the corresponding Julia set is connected. However, the connectedness locus makes no sense in our case since every Julia set is connected.

For $\lambda\neq 0$, then 1 and $\infty$ are two superattracting fixed points of $U_{d\lambda}$. The \emph{non-escaping locus} $\MM_d$ associated to this family is defined by
\begin{equation}\label{Mandel}
\MM_d=\{\lambda\in\C^*:U_{d\lambda}^{\circ n}(0)\not\rightarrow 1 \text{~and~} U_{d\lambda}^{\circ n}(0)\not\rightarrow \infty \text{~as~}n\rightarrow\infty\}\cup\{0\}.
\end{equation}
Obviously, ``non-escaping" here means the collection of those parameters such that the orbit of $0$ cannot be attracted by $1$ and $\infty$. Note that $0$ is a critical value of $U_{d\lambda}$.

\begin{figure}[!htpb]
  \setlength{\unitlength}{1mm}
  \centering
  \includegraphics[width=70mm]{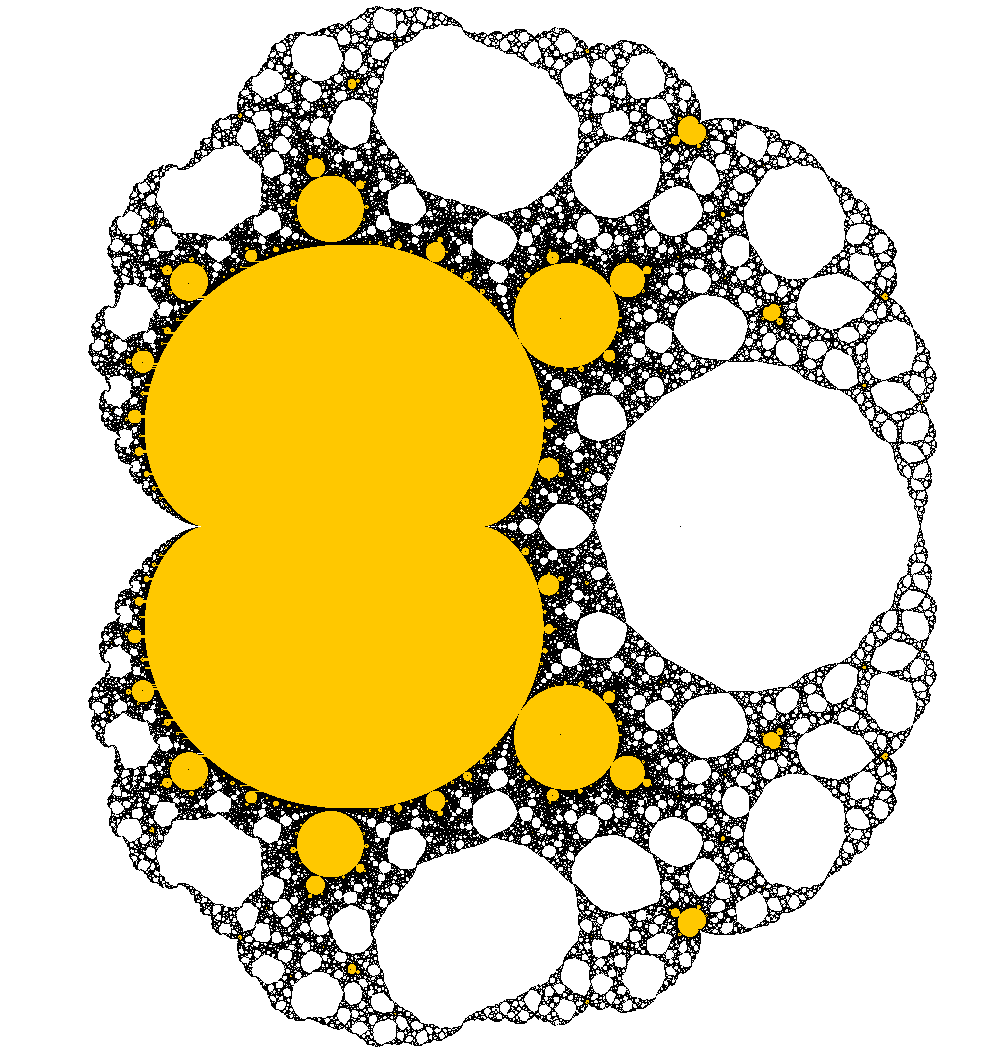}
  \includegraphics[width=70mm]{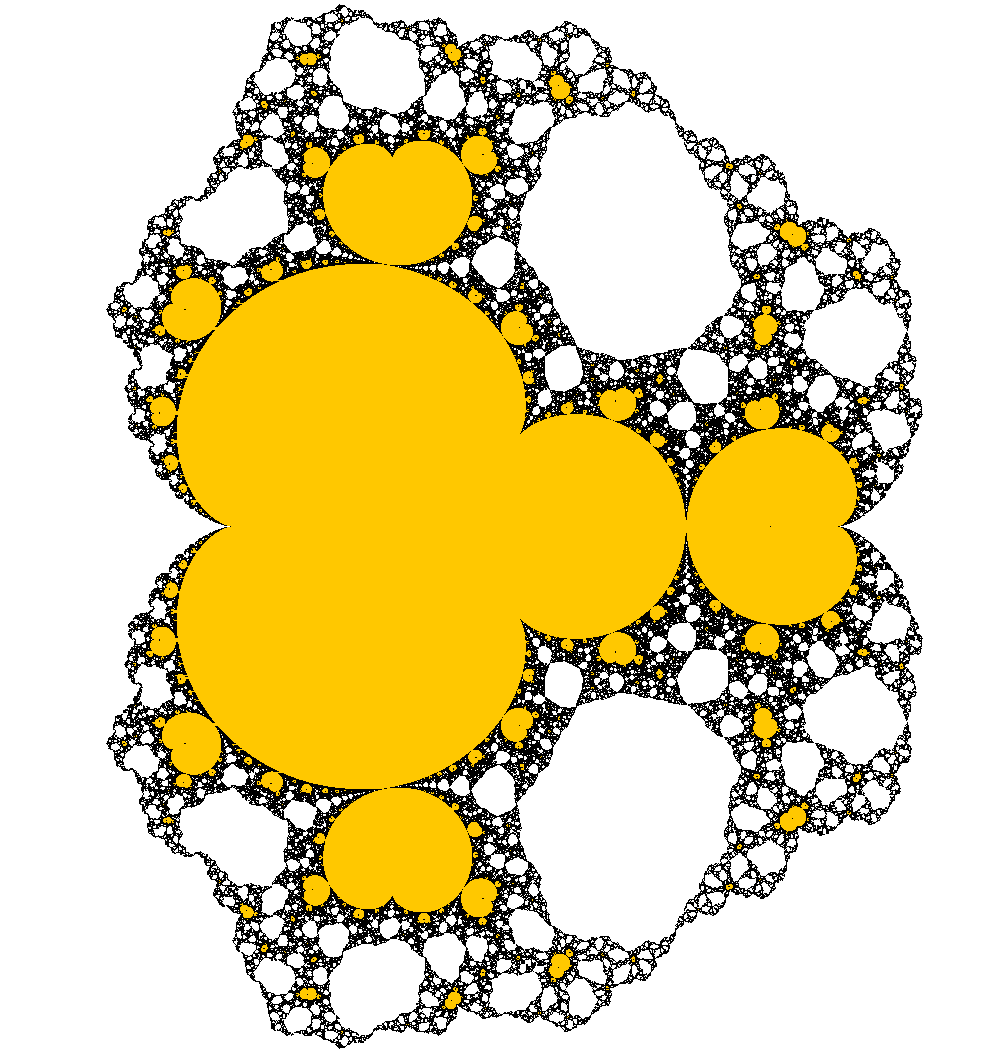}
  \caption{The non-escaping loci $\MM_2$ and $\MM_3$.}
  \label{Fig_parameter}
\end{figure}

The non-escaping locus $\MM_d$ can be identified as the complex plane cutting out infinitely many simply connected domains, which will be called `capture domains' later (see Figure \ref{Fig_parameter} and Proposition \ref{prop-decp}). There exist many small copies of the Mandelbrot set $M$ in $\MM_d$ which correspond to the renormalizable parameters.

For the connectivity of the non-escaping locus $\MM_d$, Wang et al. proved that $\MM_2$ is connected \cite[Theorem 1.1]{WQYQG}. We now generate this result to all $\MM_d$, where $d\geq 2$.

\begin{thm}\label{M-connected}
The non-escaping locus $\MM_d$ is connected for $d\geq 2$.
\end{thm}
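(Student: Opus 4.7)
The plan is to establish Theorem~\ref{M-connected} by the classical Douady--Hubbard strategy: to show that $\MM_d$ is compact and that every connected component of $\EC\setminus\MM_d$ is simply connected. Once this is done, a standard planar-topology argument (a compact set $K\subset\EC$ is connected iff each component of $\EC\setminus K$ is simply connected) forces $\MM_d$ itself to be connected. The scheme was carried out by Wang et al.~\cite{WQYQG} in the case $d=2$; the task is to extend it to arbitrary $d\ge 2$.

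First I would check compactness of $\MM_d$. Closedness in $\C$ follows from continuous dependence of the iterates $U_{d\lambda}^{\circ n}(0)$ on $\lambda\in\C^*$ together with the fact that convergence to $1$ or to $\infty$ is an open condition. For boundedness, a direct evaluation gives
\begin{equation*}
U_{d\lambda}(0)=\left(\frac{(\lambda-1)^d+(\lambda-1)(-1)^d}{(\lambda-1)^d-(-1)^d}\right)^d\longrightarrow 1\qquad\text{as }|\lambda|\to\infty,
\end{equation*}
so that for $|\lambda|$ large $U_{d\lambda}(0)$ already lies in the immediate basin of the superattracting fixed point $1$, and $\lambda\notin\MM_d$. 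In particular the unbounded component of $\C\setminus\MM_d$ is a $1$-capture component.

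Next I would fix a component $\Omega$ of $\C\setminus\MM_d$. By continuity the critical orbit $\{U_{d\lambda}^{\circ n}(0)\}$ is captured by the same attracting fixed point $p\in\{1,\infty\}$ for every $\lambda\in\Omega$. Let $\phi_{\lambda,p}$ be the B\"ottcher coordinate at $p$, which conjugates $U_{d\lambda}$ to $w\mapsto w^d$ on a neighbourhood $V_{\lambda,p}$ of $p$ and depends holomorphically on $\lambda$. Let $N_\lambda$ be the least non-negative integer with $U_{d\lambda}^{\circ N_\lambda}(0)\in V_{\lambda,p}$, and define the parameter B\"ottcher map
\begin{equation*}
\Phi_\Omega(\lambda)=\phi_{\lambda,p}\bigl(U_{d\lambda}^{\circ N_\lambda}(0)\bigr)^{1/d^{N_\lambda}},
\end{equation*}
after fixing a branch of the root on a small neighbourhood in $\Omega$ and analytically continuing over $\Omega$. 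I would then verify that $\Phi_\Omega$ is holomorphic, takes values in the appropriate model region ($\EC\setminus\overline{\D}$ when $p=\infty$, respectively a punctured neighbourhood of $0$ when $p=1$, after rescaling), and is proper: as $\lambda\to\partial\Omega$ the orbit of $0$ fails to escape, forcing $\Phi_\Omega(\lambda)$ to the boundary of the target. Combined with an asymptotic analysis---using the formula above at $\lambda=\infty$ on the unbounded component, and at the centre where some iterate of $0$ lands exactly on $p$ on a bounded component---this is expected to give degree one and hence that $\Phi_\Omega$ is a conformal isomorphism onto its image. Consequently $\Omega$ (together with $\infty$, in the unbounded case) is simply connected.

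The hard part will be the single-valuedness and degree-one count of $\Phi_\Omega$ on bounded capture components. Because $U_{d\lambda}$ carries $2d(d-1)$ free critical points beyond $0$, the B\"ottcher coordinate $\phi_{\lambda,p}$ does not in general extend along the orbit of $0$; this is exactly why the construction uses an $N_\lambda$-th iterate together with a $d^{N_\lambda}$-th root, and to ensure single-valuedness one must rule out non-trivial monodromy of this root extraction along loops in $\Omega$, which reduces to controlling possible collisions between the orbit of $0$ and the other free critical orbits as $\lambda$ moves. Degree one should then follow from a quasi-conformal surgery argument in the spirit of Douady--Hubbard: given $\lambda_1,\lambda_2\in\Omega$ with $\Phi_\Omega(\lambda_1)=\Phi_\Omega(\lambda_2)$, one constructs a quasi-conformal conjugacy between $U_{d\lambda_1}$ and $U_{d\lambda_2}$ agreeing with the B\"ottcher identification near $p$, equivariantly extends it, and concludes $\lambda_1=\lambda_2$. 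Once every component of $\C\setminus\MM_d$ is simply connected, Theorem~\ref{M-connected} follows.
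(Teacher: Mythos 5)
Your top-level reduction --- $\MM_d$ is compact and every component of $\EC\setminus\MM_d$ is simply connected, hence $\MM_d$ is connected --- is the same one the paper uses (Proposition~\ref{prop-decp} together with Theorem~\ref{thm-H_n-simp}), but your route to simple connectivity is the Douady--Hubbard parameter-B\"ottcher construction of \cite{WQYQG}, which the paper explicitly sets out to avoid. That route is viable in principle (it is how the case $d=2$ was originally handled), but as written your argument has a genuine gap exactly where you flag it: the single-valuedness, properness and degree-one count for $\Phi_\Omega$ on the bounded capture components. The expression $\phi_{\lambda,p}\bigl(U_{d\lambda}^{\circ N_\lambda}(0)\bigr)$ is only defined where the B\"ottcher coordinate continues analytically out to the $N_\lambda$-th iterate of $0$ without meeting another critical orbit, the integer $N_\lambda$ need not be locally constant, and the $d^{N_\lambda}$-th root carries potential monodromy along loops in $\Omega$; controlling all of this (plus the injectivity surgery) is precisely the content of the long proof in \cite{WQYQG}, and the phrases ``is expected to give degree one'' and ``should then follow'' stand in for that main technical work rather than supplying it for general $d$. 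Until that is carried out, the proposal is a program, not a proof of Theorem~\ref{M-connected}.

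The paper closes the same step by an entirely different and much shorter mechanism. It first shows (Theorem~\ref{thm-qua-class}, via Theorem~\ref{thm-const-crit} and Corollary~\ref{cor-quasi-rid}) that each capture component with its centers removed is a single quasiconformal conjugacy class; by McMullen--Sullivan \cite{McS} such a class is isomorphic to $\Teich(T_{d\lambda})/\Mod(T_{d\lambda})$, the Teichm\"uller space reduces to $\mathbb{H}$ because $T_{d\lambda}$ is hyperbolic there (so the invariant-line-field factor on $J_{d\lambda}$ is trivial), and the modular group is a subgroup of $\Z$ generated by a Dehn twist in the complementary annuli of the grand-orbit closure. Since a capture component minus a center, and likewise the unbounded domain $\MH_0$ (Proposition~\ref{prop-H-0-unbound}, Lemma~\ref{lema-no-center}), cannot be simply connected, the quotient must be $\mathbb{H}/\Z\simeq\D^*$; this yields in one stroke the simple connectivity of each bounded component, the uniqueness of its center, and the punctured-disk structure of $\MH_0$, with no B\"ottcher bookkeeping. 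If you wish to keep your approach you must reproduce the puzzle/surgery arguments of \cite{WQYQG} for all $d\ge 2$; otherwise the Teichm\"uller-theoretic argument is the efficient way to fill the gap.
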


The proof of the connectivity of $\MM_2$ in \cite{WQYQG} is based on constructing Riemann mapping from the capture domain to the unit disk $\D$, which is tediously long. Here, we give a proof of Theorem \ref{M-connected} by using the methods of Teichm\"{u}ller theory of the rational maps which was developed in \cite{McS}. The proof is largely simplified and there are several additional results. For example, we show that the Julia set of $U_{d\lambda}$ is a quasicircle if and only if $\lambda$ lies in the unbounded capture domain $\MH_0$ (Proposition \ref{prop-H-0-unbound}) and each bounded capture domain contains exactly one \emph{center} (Theorem \ref{thm-H_n-simp}).

If $\lambda$ is large enough, then the Julia set of $U_{d\lambda}$ is a quasicircle (see Proposition \ref{prop-H-0-unbound}). Hu and Lin observed that these circles become more and more `circular' as $\lambda$ tends to $\infty$ in the case of $d=2$ \cite{HL}. In \cite{Ga}, Gao proved the Hausdorff dimension of the Julia set of $U_{2n\lambda}$ tends to 1 for every $n\geq 2$, which gave an affirmative answer of Hu and Lin proposed in 1989. In this paper, we consider the asymptotic formula of the Hausdorff dimension of the Julia set $J_{d\lambda}$ of $U_{d\lambda}$ as the parameter $\lambda$ tends to $\infty$.

\begin{thm}\label{thm-Haus}
Let $d\geq 2$. For large $\lambda$ such that $J_{d\lambda}$ is a quasicircle, the Hausdorff dimension of $J_{d\lambda}$ is given by
\begin{equation}\label{asmy-fml}
\dim_H(J_{d\lambda})=1+\frac{1}{4\log{d}}|\lambda|^{-\frac{2}{d+1}}+\mathcal{O}(\lambda^{-\frac{3}{d+1}}).
\end{equation}
\end{thm}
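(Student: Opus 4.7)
The plan is to blow up the annular region where $J_{d\lambda}$ lives so that $U_{d\lambda}$ becomes a small holomorphic perturbation of $\zeta \mapsto \zeta^{d^2}$, and then combine Bowen's formula with a second-order perturbation of the topological pressure. Using the factorization $U_{d\lambda} = \tau \circ \sigma \circ \tau \circ \sigma$ with $\tau(z) = z^d$ and $\sigma(z) = 1 + \lambda/(z-1)$ (verified by checking $\sigma \circ \tau \circ \sigma = h$, where $h$ is the inner factor of $U_{d\lambda}$), tracking the exponent $a := \log_{|\lambda|}|z-1|$ through the composition---with $\sigma$ acting by $a \mapsto 1-a$ and $\tau$ by $a \mapsto d\,a$ in the regime $a \in (0,1)$---pins down $a = d/(d+1)$ as the unique fixed exponent, so $J_{d\lambda}$ concentrates in the annulus $|z-1| \asymp |\lambda|^{d/(d+1)}$. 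Substituting $z = 1 + \lambda^{d/(d+1)}\zeta$ and writing $\epsilon := \lambda^{-1/(d+1)}$, a direct computation gives
\[
F_\lambda(\zeta) := \frac{U_{d\lambda}(1 + \lambda^{d/(d+1)}\zeta) - 1}{\lambda^{d/(d+1)}} = \zeta^{d^2} + d\epsilon\bigl(\zeta^{d^2-d} - d\,\zeta^{d^2+1}\bigr) + \mathcal{O}(\epsilon^2),
\]
uniformly on any compact annulus avoiding $0$ and $\infty$. Hence the rescaled Julia set $J_{F_\lambda}$ is a quasicircle $\epsilon$-close to $\{|\zeta|=1\}$, along which $|F_\lambda'| = d^2 + \mathcal{O}(\epsilon)$.

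By Bowen's formula (applicable since $F_\lambda$ is hyperbolic, Proposition \ref{prop-H-0-unbound}), $\dim_H J_{d\lambda}$ is the unique $s = s(\epsilon)$ with $P(-s\log|F_\lambda'|) = 0$, where $P$ is the topological pressure of $F_\lambda|_{J_{F_\lambda}}$. Transport everything to $S^1$ by the quasiconformal conjugacy $h_\epsilon : S^1 \to J_{F_\lambda}$ satisfying $F_\lambda \circ h_\epsilon = h_\epsilon \circ \tau_0$, $\tau_0(\zeta) := \zeta^{d^2}$, and expand $\phi_\epsilon := \log|F_\lambda' \circ h_\epsilon| = \log d^2 + \epsilon\phi_1 + \epsilon^2\phi_2 + \mathcal{O}(\epsilon^3)$. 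The infinitesimal conjugacy $H_1 := dh_\epsilon/d\epsilon|_{\epsilon=0}$ solves the cohomological equation
\[
H_1(\zeta^{d^2}) - d^2\zeta^{d^2-1}H_1(\zeta) = d\,\zeta^{d^2-d} - d^2\,\zeta^{d^2+1}
\]
on $S^1$, which is uniquely solvable (up to a gauge constant) by Fourier expansion. Second-order perturbation of pressure about the equilibrium state $m =$ Lebesgue on $S^1$ for $-\log d^2$ then yields
\[
s(\epsilon) = 1 + \frac{\epsilon^2}{2\log d}\Bigl[\tfrac{1}{2}V(\phi_1) - \int\phi_2\, dm\Bigr] + \mathcal{O}(\epsilon^3),
\]
where $V(\phi_1) := \lim_{N\to\infty} N^{-1}\int\bigl(\sum_{k=0}^{N-1}\phi_1 \circ \tau_0^k\bigr)^2 dm$ is the asymptotic variance of $\phi_1$; the first-order term vanishes because $\int\phi_1\,dm = 0$ by Fourier orthogonality.

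The final step is to show the bracket equals $\tfrac{1}{2}$, which yields the coefficient $\frac{1}{4\log d}$ of the theorem. Plugging the Fourier series of $H_1$---whose only nonzero coefficients are $a_{d^{2k}+1} = d^{-2k}$ and $a_{1-d^{2k+1}} = -d^{-(2k+1)}$ for $k \geq 0$---into the explicit formulas for $\phi_1$ and $\phi_2$, and using orthogonality together with the orbit structure of $n \mapsto n d^2$ on $\mathbb{Z}$ for the cross-terms in $V(\phi_1)$, the resulting infinite series telescope and collapse to $\tfrac{1}{2}$. This collapse is the main obstacle: both $V(\phi_1)$ and $\int\phi_2\,dm$ are infinite sums whose cancellations must be tracked with algebraic precision. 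A technically cleaner alternative is to differentiate the leading eigenvalue of the Ruelle transfer operator $L_s f(\zeta) := \sum_{F_\lambda(\eta) = \zeta} f(\eta)/|F_\lambda'(\eta)|^s$ at $(s, \epsilon) = (1, 0)$ via the implicit function theorem, reducing the matter to a spectral computation on $S^1$.
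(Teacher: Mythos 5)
Your overall strategy is sound and closely parallels the paper's: rescale by $z=1+\lambda^{d/(d+1)}\zeta$ so that the map becomes a small perturbation of a power map, conjugate the unit circle to the Julia set, and extract the dimension from a second-order expansion of a Bowen-type equation. Your preliminary computations check out (the factorization $U_{d\lambda}=\tau\circ\sigma\circ\tau\circ\sigma$, the exponent $d/(d+1)$, the expansion of $F_\lambda$, and the Fourier coefficients of $H_1$ all agree with what one gets from the paper's normalization). But the proof has a genuine gap exactly where the theorem's content lies: you assert that $\tfrac12 V(\phi_1)-\int\phi_2\,dm=\tfrac12$ because the infinite series ``telescope and collapse,'' without carrying out that computation. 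This is not a routine verification. It requires (i) the explicit second-order term $\phi_2$, which depends on the second-order coefficient of the conjugacy (the analogue of the paper's $u_2$ in \eqref{solution_u2}), and (ii) a careful orthogonality analysis of which exponents $q^{m_1}\pm q^{m_2}$, $q^{k+m_1}-q^{m_2}$, etc.\ vanish modulo $q^n-1$ and which survive --- precisely the content of Lemma \ref{lemma_Appendix} and equations \eqref{ingre-1}--\eqref{simp-1}, which occupy the entire appendix. Until that bracket is actually evaluated, you have not proved the coefficient $\tfrac{1}{4\log d}$; you have only shown the dimension has the form $1+c\,|\lambda|^{-2/(d+1)}+\mathcal{O}(\lambda^{-3/(d+1)})$ for some constant $c$. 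A secondary (fixable) imprecision: the perturbation parameter $\epsilon=\lambda^{-1/(d+1)}$ is complex, so the expansion must be carried out in $\epsilon$ and $\overline{\epsilon}$ jointly; your displayed $\epsilon^2/(2\log d)$ should be $|\epsilon|^2/(2\log d)$, and one must check separately that the $\epsilon^2$ and $\overline{\epsilon}^2$ contributions vanish (the paper does this via the ansatz \eqref{finish-last}).

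Two remarks on how your route compares with the paper's, which bear on closing the gap. First, the paper exploits the splitting $U_{d\lambda}=T_{d\lambda}\circ T_{d\lambda}$ and works with the degree-$d$ map $T_{d\lambda}$, whose rescaling is $\zeta^{-d}+d\alpha\zeta^{-(d-1)}+\cdots$ with a \emph{single} first-order term; your degree-$d^2$ map $F_\lambda$ has two first-order terms, which roughly doubles the number of cross-terms you must track in $V(\phi_1)$. Since $J(U_{d\lambda})=J(T_{d\lambda})$, nothing is lost by passing to $T_{d\lambda}$, and the bookkeeping becomes noticeably lighter. Second, in place of Bowen's formula and pressure perturbation the paper uses a self-contained periodic-orbit criterion (Lemma \ref{lema-fix}: $\sum_{z\in\mathrm{Fix}(f^{\circ n})}|(f^{\circ n})'(z)|^{-D}=\mathcal{O}(1)$), proved via an explicit IFS and Koebe distortion; this is equivalent to your pressure equation but replaces the asymptotic-variance formalism by finite averages over the $(q^n-1)$-th roots of unity, for which the orthogonality relations are elementary. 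Either formalism works, but you still owe the reader the computation that produces $\tfrac12$.
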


The proof of the asymptotic formula \eqref{asmy-fml} is based on the calculation of an explicit iterated function system (see Lemma \ref{lema-fix}). As a useful tool, the iterated function system has been used to study the Hausdorff dimension of Julia sets in several papers previously. The first heart-stirring formula on the Hausdorff dimension of Julia sets, which was calculated by  an iterated function system, was due to Ruelle \cite{Ru}. He proved that for polynomials $P_c(z)=z^d+c$ with degree $d\geq 2$, if $c$ is small, then the Hausdorff dimension of the Julia set $J_c$ of $P_c$ is given by
\begin{equation}\label{Haus-H-J_c}
\dim_H(J_c)=1+\frac{|c|^2}{4\log d}+\mathcal{O}(c^3).
\end{equation}
Later, the Hausdorff dimension formula of $J_c$ was recalculated in \cite{WBKS} and \cite[p.\,119]{CDM}, where the formula \eqref{Haus-H-J_c} was expanded to the third order and fourth order in $c$, respectively.

We would like to mention that Theorem \ref{thm-Haus} is a generation of \cite{Os} in which the asymptotic formula of the Hausdorff dimension of $J_{2\lambda}$ was calculated. Recently, the iterated function system has been used to calculate the Hausdorff dimension of the boundary of the immediate basin of infinity of the McMullen maps \cite{YW}. Note that the iterated function system is just probably suitable for calculating the Hausdorff dimension of the quasicircles. Rather than iterated function system, Shishikura and Tan use renormalization theory to study the Hausdorff dimensions of the Julia sets and the bifurcation loci of parameter spaces. For example, see \cite{Sh} and \cite{Ta}.

This paper is organized as follows.
In \S\ref{locate-crit}, we analyze the location of the critical points of $U_{d\lambda}$ and show that the Julia set of $U_{d\lambda}$ is always connected and prove Theorem \ref{J-connected}.
In \S\ref{sec-no-sier}, we show that if the parameter lies on the real axis, then there exist two Fatou components of $U_{d\lambda}$ such that the intersection of the boundaries of them is non-empty and the Julia set of $U_{d\lambda}$ cannot be a Sierpi\'{n}ski carpet, which means Theorem \ref{thm-no-sier} holds.
In \S\ref{sec-decom}, we show that the parameter plane of $U_{d\lambda}$ can be decomposed into the non-escaping locus $\MM_d$ union infinitely many capture domains.
In \S\ref{sec-quasi-conj}, we give a complete classification of the quasiconformal conjugacy classes of $U_{d\lambda}$.
In \S\ref{sec-simp-con}, we show that each bounded capture domain is simply connected and the unique unbounded capture domain is homeomorphic to the punctured disk and prove Theorem \ref{M-connected}.
We will prove the asymptotic formula \eqref{asmy-fml} of Theorem \ref{thm-Haus} in \S\ref{pf-asym-fml} but leave the complicated calculations to the last section as an appendix.

\vskip0.2cm
\noindent\textit{Acknowledgements.} We want to express our deep thanks to the referee for his careful reading and pertinent comments which indeed improved this paper a lot.

\section{The location of critical points and the connected Julia sets}\label{locate-crit}

Firstly, we give a splitting principle for $U_{d\lambda}$. This principle is not exist if one considers $U_{mn\lambda}$ with $m\neq n$. This is the reason why we set $m=n$ in this paper. For every $\lambda\in\C^*$, it is straightforward to verify that $U_{d\lambda}=T_{d\lambda}\circ T_{d\lambda}$, where
\begin{equation}\label{T-d-lambda}
U_{d\lambda}(z)=\left(\frac{(z+\lambda-1)^d+(\lambda-1)(z-1)^d}{(z+\lambda-1)^d-(z-1)^d}\right)^d \text{~and~} T_{d\lambda}(z)=\left(\frac{z+\lambda-1}{z-1}\right)^d.
\end{equation}

A direct calculation shows that the set of all critical points of $T_{d\lambda}$ is $\{1,1-\lambda\}$, and both with multiplicity $d-1$. Note that
\begin{equation}\label{pre-1}
U_{d\lambda}^{-1}(\infty)=T_{d\lambda}^{-1}(1)=\bigcup_{k=0}^{d-1}\{\xi_k\} \text{~and~}
U_{d\lambda}^{-1}(0)=T_{d\lambda}^{-1}(1-\lambda)=\bigcup_{k=0}^{d-1}\{\omega_k\},
\end{equation}
where
\begin{equation}\label{pre-1-lambda}
\xi_k=\frac{e^{\frac{2k\pi i}{d}}+\lambda-1}{e^{\frac{2k\pi i}{d}}-1}  \text{~and~}
\omega_k=\frac{(1-\lambda)^{\frac{1}{d}}e^{\frac{2k\pi i}{d}}+\lambda-1}{(1-\lambda)^{\frac{1}{d}}e^{\frac{2k\pi i}{d}}-1}.
\end{equation}
It follows that $\xi_k$ and $\omega_k$ are critical points of $U_{d\lambda}$ with multiplicity $d-1$, where $0\leq k\leq d-1$. In particular, $\xi_0=\infty$. Therefore, the set of all critical points of $U_{d\lambda}$ is
\begin{equation}\label{Crit-U-lambda}
\Crit (U_{d\lambda})=\{1,1-\lambda,\infty\}\cup\bigcup_{k=1}^{d-1}\{\xi_k\}\cup\bigcup_{k=0}^{d-1}\{\omega_k\}.
\end{equation}

Since $T_{d\lambda}(1)=\infty$, $T_{d\lambda}(\infty)=1$ and $1,\infty$ are both critical points of $U_{d\lambda}$, it means that there exist two fixed immediate superattracting basins $\mathcal{A}_{d\lambda}(1)$ and $\mathcal{A}_{d\lambda}(\infty)$ of $U_{d\lambda}$ with centers $1$ and $\infty$ respectively. Under the iteration of $T_{d\lambda}$, we have the following forward orbits:
\begin{equation}\label{Orbit-Crit}
\xi_k\mapsto 1\mapsto\infty\mapsto 1\mapsto\infty\mapsto \cdots \text{~~and~~}
\omega_k\mapsto 1-\lambda\mapsto 0\mapsto (1-\lambda)^d\mapsto\cdots
\end{equation}
for every $0\leq k\leq d-1$. Since the dynamical behaviors are determined by the critical forward orbits essentially, we only need to focus on the \emph{free} critical orbit of $1-\lambda$ (or equivalently, the forward orbit of 0) under the iteration of $T_{d\lambda}$ or $U_{d\lambda}$. This is the reason why we define the non-escaping locus $\MM_d$ as in \eqref{Mandel}.

\begin{lema}\label{lema-covering}
Let $U$ and $V$ be two domains on $\EC$ and assume that $V$ is simply connected. If $f:U\rightarrow V$ is a branched covering with only one critical value in $V$ (counted without multiplicity), then $U$ is also simply connected.
\end{lema}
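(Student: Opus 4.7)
The plan is to remove the unique critical value and reduce the statement to the classification of coverings of a once-punctured disk. Let $v$ denote the unique critical value of $f$ in $V$, and set $V':=V\setminus\{v\}$ and $U':=U\setminus f^{-1}(v)$. By the definition of critical value, the restriction $f|_{U'}:U'\to V'$ is an unbranched covering map. Since $V$ is simply connected and $v$ lies in its interior, $V'$ is homotopy equivalent to a circle, so $\pi_1(V')\cong\mathbb{Z}$.

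Next I would argue that $U'$ is connected. Since $U$ is a domain and $f^{-1}(v)$ is a finite discrete subset of $U$, removing it does not disconnect $U$; thus $U'$ is connected, and $f|_{U'}$ is a connected finite-degree unbranched covering of $V'$.

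The key step is to invoke the classification of connected finite-degree coverings of $V'$: these correspond to subgroups of $\pi_1(V')\cong\mathbb{Z}$, and up to homeomorphism over $V'$ a degree-$d$ cover is given by $z\mapsto z^d$ from a once-punctured disk onto $V'$. Applying this to $f|_{U'}$ shows that $U'$ is itself homeomorphic to a once-punctured disk with a single end above the puncture of $V'$; in particular $f^{-1}(v)$ consists of a single point. Filling this point back in yields that $U=U'\cup f^{-1}(v)$ is homeomorphic to a disk, hence simply connected.

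I do not expect a substantial obstacle. The main points that require care are the discreteness of $f^{-1}(v)$ (so that removing it preserves connectedness of $U$) and the identification of the number of ends of $U'$ above $v$ from the monodromy of the covering, both standard. A slicker alternative would be to apply the Riemann--Hurwitz formula $\chi(U)=d\,\chi(V)-\sum_p(e_p-1)$, which under our hypotheses simplifies to $\chi(U)=|f^{-1}(v)|$; combined with the fact that a connected planar domain (other than the sphere) has Euler characteristic at most one, this forces $|f^{-1}(v)|=1$ and $U$ simply connected.
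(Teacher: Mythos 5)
Your proof is correct and follows essentially the same route as the paper's: both delete the critical value $v$ and its fibre, observe that $f$ restricts to an unramified covering of the once-punctured simply connected domain $V\setminus\{v\}$, and conclude that $U\setminus f^{-1}(v)$ is an annulus (equivalently, a once-punctured disk) which, after refilling the single point of $f^{-1}(v)$, yields a disk. The paper carries out the middle step via the Riemann--Hurwitz formula (Euler characteristic $0$ for the unramified cover) rather than the monodromy classification of covers of a space with $\pi_1\cong\Z$, but your own Riemann--Hurwitz alternative is exactly that computation, and your version is in fact slightly more explicit than the paper about why $f^{-1}(v)$ must be a single point.
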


\begin{proof}
Let $v$ be the unique critical value lying in $V$. Consider the unramified covering $f:U\setminus f^{-1}(v)\rightarrow V\setminus\{v\}$. Since $V\setminus\{v\}$ is an annulus with Euler characteristic $0$, it follows that $U\setminus f^{-1}(v)$ is also an annulus by the Riemann-Hurwitz formula. This means that $U$ is a topological disk, which is simply connected as desired.
\end{proof}

In order to prove a rational map has connected Julia set, one often needs to exclude the existence of Herman ring. The following lemma was proved in \cite{Ya}.

\begin{lema}[{\cite[Corollary 3.2]{Ya}}]\label{no-Herman}
The renormalization transformation $U_{d\lambda}$ has no Herman ring.
\end{lema}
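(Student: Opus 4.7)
The plan is to reduce to the stronger claim that $T_{d\lambda}$ itself has no Herman ring, and then invoke the refined Fatou-Shishikura inequality. For the reduction, suppose $H$ were a Herman ring of $U_{d\lambda}$ with $U_{d\lambda}$-period $p$. Since $U_{d\lambda}=T_{d\lambda}\circ T_{d\lambda}$, the map $T_{d\lambda}$ permutes the Fatou components of $U_{d\lambda}$, and $T_{d\lambda}^{2p}(H)=H$. Thus $H$ is a periodic Fatou component of $T_{d\lambda}$; some iterate of its first return map equals the irrational rotation $U_{d\lambda}^p|_H$, so the first return map is itself conjugate to an irrational rotation. Hence $H$ is a Herman ring of $T_{d\lambda}$, and it suffices to rule out Herman rings for $T_{d\lambda}$.

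The map $T_{d\lambda}$ has degree $d$, with exactly two critical points, $1$ and $1-\lambda$, each of multiplicity $d-1$. Since $T_{d\lambda}(1)=\infty$ and $T_{d\lambda}(\infty)=1$, the pair $\{1,\infty\}$ is a superattracting $2$-cycle whose immediate basin absorbs the critical point $1$. The remaining critical point $1-\lambda$ generates a grand orbit that is either disjoint from that of $1$, or eventually merges into it. In the latter case, every critical orbit of $T_{d\lambda}$ is absorbed into the superattracting basin of $\{1,\infty\}$, so $T_{d\lambda}$ is hyperbolic and postcritically finite, which trivially precludes Herman rings. In the former case, $T_{d\lambda}$ has exactly one free grand orbit of critical points.

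By the refined Fatou-Shishikura inequality, each Herman ring cycle requires two distinct grand orbits of critical points whose forward orbits accumulate on the two boundary components of the cycle. With at most one free grand orbit available, $T_{d\lambda}$ admits no Herman ring; combining with the reduction above yields the lemma.

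The main obstacle is the correct invocation of the refined Fatou-Shishikura inequality: one needs the sharp version stating that each Herman ring cycle costs two independent grand critical orbits, rather than just two critical points counted with multiplicity, since the naive multiplicity-based bound leaves room for a Herman ring when $d\geq 3$, even after accounting for the superattracting $2$-cycle $\{1,\infty\}$. One must therefore appeal to Shishikura's more delicate counting, where the two boundary components of each Herman ring cycle are independently associated with distinct grand orbits of critical points via a quasiconformal deformation argument.
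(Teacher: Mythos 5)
Your reduction from $U_{d\lambda}$ to $T_{d\lambda}$ is sound (the two maps have the same Fatou set, and a holomorphic automorphism of a finite-modulus annulus whose square is an irrational rotation is itself an irrational rotation), and your accounting of the critical orbits of $T_{d\lambda}$ --- one grand orbit absorbed by the superattracting $2$-cycle $1\leftrightarrow\infty$, at most one free grand orbit --- is exactly the right setup. You are also right that the classical Fatou--Shishikura inequality, which counts critical points with multiplicity, fails here for $d\geq 3$: it only gives $1+2n_{HR}\leq 2d-2$.

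The gap is the step you yourself flag as ``the main obstacle'': the assertion that each cycle of Herman rings requires two \emph{distinct grand orbits} of critical points, one accumulating on each boundary component. This is not an off-the-shelf theorem that can be invoked under the name ``refined Fatou--Shishikura inequality.'' The classical fact is only that $\partial H$ is contained in the closure of the postcritical set; nothing standard prevents a single infinite critical orbit from accumulating on both complementary continua of the annulus, and Epstein-type refinements (nonrepelling cycles versus infinite critical tails) do not hand you the Herman-ring statement in the grand-orbit form you need. In fact, the statement you are invoking is essentially the content of the result the paper cites for this lemma, namely \cite[Corollary 3.2]{Ya}: there the conclusion is obtained by Shishikura's quasiconformal surgery, cutting the ring cycle along invariant curves to manufacture Siegel cycles and then tracking which critical grand orbits must lie on which side, with separate arguments for period $1$ and period at least $2$. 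Your proposal defers precisely this surgery and bookkeeping, so as written it reduces the lemma to an unproved (and not precisely referenced) claim rather than establishing it. To close the gap you would either need to carry out that surgery argument or cite the specific theorem of \cite{Ya} that encapsulates it --- at which point your proof coincides with the paper's, which likewise treats the lemma as imported from \cite{Ya} rather than proved internally.
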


The proof of Lemma \ref{no-Herman} relies on the quasiconformal surgery and the arguments are divided into two cases: Herman ring with period $1$ and period at least two. However, the prove idea is different from \cite[Appendix A]{Mi2}.

\begin{thm}\label{J-connected-stated-again}
The Julia set of $T_{d\lambda}$ is always connected for every $d\geq 2$ and $\lambda\in\C^*$.
\end{thm}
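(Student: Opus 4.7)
The key observation is that $U_{d\lambda}=T_{d\lambda}\circ T_{d\lambda}$, so $J(T_{d\lambda})=J(U_{d\lambda})$ and the theorem is equivalent to the assertion that every Fatou component of $T_{d\lambda}$ is simply connected. My plan is to prove this equivalent statement by a backwards induction using Lemma \ref{lema-covering}. Sullivan's no-wandering-domains theorem guarantees that every Fatou component is eventually periodic, so the induction reduces to two ingredients: (i) every periodic Fatou component is simply connected, and (ii) each Fatou component of $T_{d\lambda}$ contains at most one of the two critical values $\{0,\infty\}$, which ensures that the hypothesis of Lemma \ref{lema-covering} is met at every inductive step.

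The hard part will be (ii). Since $\infty\in\mathcal{A}_{d\lambda}(\infty)$ always, the only obstruction is $0\in\mathcal{A}_{d\lambda}(\infty)$, and I will rule this out by a Riemann--Hurwitz argument. Assuming $0\in\mathcal{A}_{d\lambda}(\infty)$, the full ramifications $T_{d\lambda}^{-1}(\infty)=\{1\}$ and $T_{d\lambda}^{-1}(0)=\{1-\lambda\}$ (each of order $d$) force $T_{d\lambda}^{-1}(\mathcal{A}_{d\lambda}(\infty))=\mathcal{A}_{d\lambda}(1)$, because the fiber over $\infty$ already exhausts the global degree $d$; in particular both critical points $1$ and $1-\lambda$ lie in $\mathcal{A}_{d\lambda}(1)$. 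Applying Riemann--Hurwitz to $T_{d\lambda}\colon\mathcal{A}_{d\lambda}(1)\to\mathcal{A}_{d\lambda}(\infty)$ (degree $d$, total ramification $2(d-1)$) and to the unramified restriction $T_{d\lambda}\colon\mathcal{A}_{d\lambda}(\infty)\to\mathcal{A}_{d\lambda}(1)$ (some degree $d_\infty\geq 1$) and eliminating $\chi(\mathcal{A}_{d\lambda}(\infty))$ will yield
\[
\chi(\mathcal{A}_{d\lambda}(1))=\frac{2(d-1)}{dd_\infty-1},
\]
whose right-hand side fails to be a valid Euler characteristic (an integer $\leq 1$) of an open connected planar surface for any integer $d_\infty\geq 1$ and any $d\geq 2$: the only integer candidate $\chi=1$ forces $d_\infty=2-1/d$, which is non-integral. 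This contradiction will prove $0\notin\mathcal{A}_{d\lambda}(\infty)$, establishing (ii).

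For (i), I will run a similar Riemann--Hurwitz bookkeeping on $\mathcal{A}_{d\lambda}(1)$ and $\mathcal{A}_{d\lambda}(\infty)$ with a case split on whether $1-\lambda\in\mathcal{A}_{d\lambda}(\infty)$, and conclude $\chi(\mathcal{A}_{d\lambda}(1))=\chi(\mathcal{A}_{d\lambda}(\infty))=1$ in every case (using the key input from (ii) that at most one critical point of $T_{d\lambda}$ can lie in either of these two components). Any other periodic Fatou cycle must trap or accumulate the free critical point $1-\lambda$; Lemma \ref{no-Herman} excludes Herman rings, and for attracting, parabolic, or Siegel cycles the single critical contribution of $1-\lambda$ (multiplicity $d-1$) together with Riemann--Hurwitz once more forces $\chi=1$ on each cycle component.

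Finally, with (i) and (ii) in hand, the backwards induction is immediate: for any Fatou component $V'$ with $V:=T_{d\lambda}(V')$ already known simply connected, the restriction $T_{d\lambda}\colon V'\to V$ is a proper branched covering with at most one critical value in $V$, so Lemma \ref{lema-covering} delivers simple connectivity of $V'$. The main obstacle is indeed the Riemann--Hurwitz contradiction in (ii); it exploits the full ramification of $T_{d\lambda}$ over both of its critical values, a feature special to the splitting form $T_{d\lambda}(z)=\bigl((z+\lambda-1)/(z-1)\bigr)^{d}$ and one that disappears when $m\neq n$ in $U_{mn\lambda}$, consistent with the fact that disconnected Julia sets do occur in that more general family.
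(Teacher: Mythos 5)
Your overall architecture is the same as the paper's (reduce to simple connectivity of each Fatou component, exclude Herman rings via Lemma \ref{no-Herman}, establish that each component contains at most one critical value, and propagate simple connectivity backwards through the no-wandering-domains tree via Lemma \ref{lema-covering}), but both of your key steps (i) and (ii) rest on a Riemann--Hurwitz bookkeeping that is vacuous in exactly the case that needs to be excluded. An immediate basin of an attracting cycle is a priori either finitely connected or \emph{infinitely} connected, and in the latter case $\chi(\mathcal{A}_{d\lambda}(1))=\chi(\mathcal{A}_{d\lambda}(\infty))=-\infty$, so the two Riemann--Hurwitz identities read $-\infty=-\infty$ and the elimination step that produces
$\chi(\mathcal{A}_{d\lambda}(1))=2(d-1)/(dd_\infty-1)$
is not a legitimate arithmetic manipulation; no contradiction is derived. (Note also that the two basins are simultaneously finitely or infinitely connected, since each is a proper image/preimage of the other, so you cannot break the symmetry this way.) The classical dichotomy ``simply connected or infinitely connected'' for immediate basins is proved by precisely this kind of computation, which shows that Euler characteristics alone cannot rule out the infinitely connected alternative. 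The same defect affects your treatment of the other attracting/parabolic cycles in (i).

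The paper closes both holes with inputs you do not supply. For (ii) it invokes \cite[Lemma 8.1]{Mi1}: if all critical values lay in a single Fatou component the Julia set would be \emph{totally disconnected}, which is impossible because the superattracting $2$-cycle $1\leftrightarrow\infty$ forces at least two Fatou components whose common boundary is nontrivial; this is a genuinely different (shrinking-pullback) argument, not a consequence of Riemann--Hurwitz. For (i) it exhausts a periodic component $U$ by the nested preimages $V_k$ of a small simply connected neighborhood (or petal) $V$ of the periodic point, and applies Lemma \ref{lema-covering} inductively to each $V_k$; since the base $V$ is trivially simply connected, this yields finite (indeed trivial) connectivity of $U$ as a conclusion rather than assuming it. To repair your proof you would need to import both of these devices, at which point you have essentially reconstructed the paper's argument.
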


\begin{proof}
The proof idea is more or less similar to the case of quadratic rational maps in \cite[Lemma 8.2]{Mi1}.
Note that the Julia set is connected if and only if each Fatou component is simply connected. By Sullivan's classification of the periodic Fatou components, every periodic Fatou component of $T_{d\lambda}$ is either a Siegel disk, a Herman ring, or an immediate basin for some attracting or parabolic point. By Lemma \ref{no-Herman}, it is known $T_{d\lambda}$ has no Herman ring.

By \cite[Lemma 8.1]{Mi1}, we know that if all the critical values of a rational map are contained in a single component of the Fatou set, then the Julia set is totally connected. However, the Julia set $J_{d\lambda}$ cannot be totally disconnected since $T_{d\lambda}$ has a superattracting periodic orbit of period 2. Therefore, the critical points $1$ and $1-\lambda$ lie in different Fatou components and each Fatou component of $T_{d\lambda}$ contains at most one critical value ($\infty$ or $0$ by \eqref{Orbit-Crit}).

Now we prove each Fatou component of $T_{d\lambda}$ is simply connected. Firstly, we assume that every periodic Fatou component of $T_{d\lambda}$ is simply connected. Note that the periodic orbit $1\leftrightarrow \infty$ is superattracting. There leaves only one critical point $1-\lambda$ needing to consider. According to Lemma \ref{lema-covering}, the preimage of a simply connected region under a branched covering with only one critical value is again simply connected. This means every Fatou component of $T_{d\lambda}$ is simply connected by induction.

Then suppose that there exists a periodic Fatou component $U$ of $T_{d\lambda}$ which is not simply connected and the period is $p\geq 1$. This means that $U$ is an attracting basin or a parabolic basin since $T_{d\lambda}$ has no Herman ring. Let $z_0$ be the attracting periodic point in $U$ or parabolic periodic point on $\partial U$. We use $V$ to denote a simply connected neighborhood or a simply connected petal of $z_0$ such that $T_{d\lambda}^{\circ p}(V)\subset V$ according to $U$ is attracting or parabolic. Let $V_k$ be the component of $T_{d\lambda}^{-kp}(V)$ containing $V$. Then $U=\bigcup_{k\geq 0}V_k$ and $V_{k+1}\mapsto T_{d\lambda}(V_{k+1})\mapsto\cdots\mapsto T_{d\lambda}^{\circ p-1}(V_{k+1})\mapsto V_k$ is a successive branched covering under $T_{d\lambda}$ with at most one critical value in each codomain since each Fatou component of $T_{d\lambda}$ contains at most one critical value. Suppose $V_{k_0}$ is simply connected (at least $k_0=0$ is satisfied). By Lemma \ref{lema-covering}, we know that $T_{d\lambda}^{\circ p-1}(V_{k_0+1}),\cdots, T_{d\lambda}(V_{k_0+1}),V_{k_0+1}$ are all simply connected since $V_{k_0}$ is also. Inductively, it follows that each $V_k$ is simply connected and hence $U$ is also simply connected. This contradicts the assumption that $U$ is not simply connected.

Therefore, in any case, the Julia set of $T_{d\lambda}$ is always connected. This ends the proofs of Theorems \ref{J-connected-stated-again} and \ref{J-connected}.
\end{proof}

\section{The Julia set cannot be a Sierpi\'{n}sk carpet}\label{sec-no-sier}

In this section, we will prove that if the parameter $\lambda$ lies on the real axis, then the Julia set of $U_{d\lambda}$ can never be a Sierpi\'{n}sk carpet by showing there always exist two Fatou components of $U_{d\lambda}$ whose boundaries are intersecting to each other.

\begin{lema}\label{lema-non-empty}
For every $d\geq 2$ and $\lambda\in \R$, there exist two Fatou components $V_1,V_2$ of $U_{d\lambda}$ such that $\overline{V}_1\cap\overline{V}_2\neq\emptyset$.
\end{lema}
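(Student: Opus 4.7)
The plan is to exploit the real symmetry of $U_{d\lambda}$ to produce a common boundary point of two Fatou components on the real axis. Since $\lambda\in\R$, the map $U_{d\lambda}$ preserves $\widehat{\R}$, and a direct graph analysis of $T_{d\lambda}|_{\widehat{\R}}$ (distinguishing the parity of $d$ and the sign of $\lambda$, and using the splitting $U_{d\lambda}=T_{d\lambda}\circ T_{d\lambda}$) shows that there is a closed $U_{d\lambda}$-invariant sub-arc $A\subseteq\widehat{\R}$ containing both superattracting fixed points $1$ and $\infty$: in most cases $A=[1,\infty]$, while for $d$ even with $\lambda<0$ one must take $A=[0,\infty]$. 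This arc $A$ will be the arena of the argument.

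Since $\A_{d\lambda}(1)$ and $\A_{d\lambda}(\infty)$ are simply connected by Theorem~\ref{J-connected-stated-again} and invariant under complex conjugation (as $\lambda\in\R$), a symmetric Riemann map argument shows that each of $\A_{d\lambda}(1)\cap\widehat{\R}$ and $\A_{d\lambda}(\infty)\cap\widehat{\R}$ is a single connected open arc on the circle $\widehat{\R}$. Intersecting with $A$ yields subintervals of the forms $[1,a)$ and $(b,\infty]$ for some $a\leq b$ in the interior of $A$. If $a=b$, then $a\in\overline{\A_{d\lambda}(1)}\cap\overline{\A_{d\lambda}(\infty)}$, and the lemma follows immediately by taking $V_1=\A_{d\lambda}(1)$ and $V_2=\A_{d\lambda}(\infty)$.

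If instead $a<b$, both endpoints lie in $J_{d\lambda}$, and the continuity of $T_{d\lambda}$ combined with the swap $T_{d\lambda}(\A_{d\lambda}(1))=\A_{d\lambda}(\infty)$ forces $T_{d\lambda}(a)=b$ and $T_{d\lambda}(b)=a$, so that $\{a,b\}$ is a non-trivial period-two cycle of $T_{d\lambda}$ in the Julia set. I would then examine what sits immediately to the right of $a$ in $A$: either some Fatou component $V$ extends from $a$ into $(a,b)$, in which case $\overline{V}\cap\overline{\A_{d\lambda}(1)}\ni a$ supplies the required pair $(V_1,V_2)=(\A_{d\lambda}(1),V)$; or else $(a,a+\varepsilon)\subseteq J_{d\lambda}$ for some $\varepsilon>0$. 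The latter possibility is the main obstacle, and I would rule it out by considering the $d-1$ preimages of $a$ under $T_{d\lambda}$ other than $b$: each of these lies on the boundary of a preimage Fatou component of $\A_{d\lambda}(1)$ distinct from $\A_{d\lambda}(\infty)$, and the expanding dynamics of $T_{d\lambda}$ near the cycle $\{a,b\}$ forces such components to accumulate on $a$ from inside the interval $(a,b)$, furnishing the required Fatou component $V$.
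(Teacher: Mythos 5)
Your overall strategy---restrict attention to $\widehat{\R}$ and hunt for a real point lying on the boundary of two Fatou components---is the same as the paper's, and several of your intermediate observations are sound: the two basins are simply connected and symmetric under conjugation, so each meets $\widehat{\R}$ in a single open arc, and (for $\lambda>0$, say) the endpoints $a\leq b$ of those arcs inside $[1,\infty]$ are fixed points of $U_{d\lambda}$. But the decisive step is not proved. First, your dichotomy at $a$ is not exhaustive: besides ``some component $V$ accumulates at $a$ from inside $(a,b)$'' and ``$(a,a+\varepsilon)\subseteq J_{d\lambda}$'' there is a third possibility, namely that $(a,a+\varepsilon)$ meets infinitely many distinct Fatou components, each in a piece bounded away from $a$, with no single component having $a$ in its closure. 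Second, the proposed repair does not close either bad case: the preimages of $a$ under $T_{d\lambda}$ need not lie in $(a,b)$, need not lie on boundaries of components other than $\A_{d\lambda}(\infty)$, and---most importantly---even if you produced a sequence of distinct Fatou components accumulating at $a$, that is strictly weaker than what the lemma requires. You must exhibit \emph{one} component $V\neq\A_{d\lambda}(1)$ with $a\in\overline{V}$, or some other specific pair of components with intersecting closures; accumulation of a sequence of components at $a$ gives neither.

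The missing ingredient is elementary real one-dimensional dynamics, which is exactly how the paper closes the argument. From the derivative formula \eqref{U-d-lambda} one checks that $U_{d\lambda}$ is increasing on the forward-invariant interval ($[1,+\infty)$ for $\lambda>0$, $[0,1]$ for $\lambda<0$). An increasing interval map with finitely many fixed points leaves no room for the Julia set to contain a subinterval: between two consecutive fixed points either $U_{d\lambda}(x)>x$ throughout or $U_{d\lambda}(x)<x$ throughout, so every orbit in that gap converges monotonically to one of the two endpoint fixed points, and the entire open gap lies in a single immediate attracting or parabolic basin. Applied to the largest fixed point $x_n$ in $(1,+\infty)$, this gives $(x_n,+\infty)\subset\A_{d\lambda}(\infty)$ while the interval immediately to the left of $x_n$ lies either in the parabolic basin of $x_n$ itself (if $U_{d\lambda}'(x_n)=1$) or in the immediate basin of $x_{n-1}$ (if $x_n$ is repelling); in either case $x_n$ lies in the closures of two distinct Fatou components. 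This single observation replaces your entire final paragraph and simultaneously disposes of the $(a,a+\varepsilon)\subseteq J_{d\lambda}$ scenario. Two smaller points: the case $\lambda=0$ needs separate treatment, since there $1$ is parabolic rather than superattracting and your setup does not apply (the paper handles it by noting $J_{d0}$ is a Jordan curve bounding the two basins); and the forward invariance of your arcs $A$ in each parity/sign case should be verified from the monotonicity computation rather than asserted.
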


\begin{proof}
If $\lambda=0$, then $U_{d\lambda}$ degenerates to a parabolic polynomial $U_{d0}(z)=(\frac{z+d-1}{d})^d$ whose Julia set $J_{d0}$ is a Jordan curve. Let $V_1=\mathcal{A}_{d\lambda}(1)$ and $V_2=\mathcal{A}_{d\lambda}(\infty)$ be the immediate superattracting basins of $1$ and $\infty$ respectively. We have $\overline{V}_1\cap\overline{V}_2=J_{d0}\neq\emptyset$.

In the following, we assume that $\lambda\in\R\setminus\{0\}$. The dynamics of $U_{d\lambda}$ will be restricted on the real axis and the arguments will be divided into several cases. Let $x\in \R$, by a direct calculation, we have
\begin{equation}\label{U-d-lambda}
U_{d\lambda}'(x)=\frac{d^2\lambda^2(x-1)^{d-1}(x+\lambda-1)^{d-1}((x+\lambda-1)^d+(\lambda-1)(x-1)^d)^{d-1}}
{((x+\lambda-1)^d-(x-1)^d)^{d+1}}.
\end{equation}

(1) Let $\lambda>0$. If $x\geq 1$, we have $x-1\geq 0$, $x+\lambda-1>0$, $(x+\lambda-1)^d+(\lambda-1)(x-1)^d>0$ and $(x+\lambda-1)^d-(x-1)^d>0$. This means that $U_{d\lambda}'(x)\geq 0$ and $U_{d\lambda}$ is increasing on $[1,+\infty)$. Moreover, $U_{d\lambda}'(x)= 0$ if and only if $x=1$. We claim that there exists at least one fixed point of $U_{d\lambda}$ lying in $(1,+\infty)$. Otherwise, we then have $1<U_{d\lambda}(x)<x$ for every $x>1$ since $U_{d\lambda}(1)=1$ and $U_{d\lambda}'(1)=0$. This means that the interval $(1,+\infty)$ is contained in the attracting basin of $1$, which is a contradiction since $\infty$ is a superattracting fixed point of $U_{d\lambda}$.

Let $1=x_0<x_1<\cdots<x_n<+\infty$ be the collection of all the fixed points of $U_{d\lambda}$ lying in $[1,+\infty)$, where $n\geq 1$. It is easy to see $U_{d\lambda}(x)>x$ if $x>x_n$. In particular, we have $(x_n,+\infty)\subset\mathcal{A}_{d\lambda}(\infty)$. Note that $U_{d\lambda}'(x_n)\geq 1$. If $U_{d\lambda}'(x_n)=1$, then $x_n$ is a parabolic fixed point of $U_{d\lambda}$ and $\mathcal{A}_{d\lambda}(x_n)$ contains a small interval on the left of $x_n$, where $\mathcal{A}_{d\lambda}(x_n)$ is the immediate parabolic basin of $x_n$. Let $V_1=\mathcal{A}_{d\lambda}(x_n)$ and $V_2=\mathcal{A}_{d\lambda}(\infty)$. We have $x_n\in\overline{V}_1\cap\overline{V}_2$. If $U_{d\lambda}'(x_n)>1$, then $x_n$ is a repelling fixed point of $U_{d\lambda}$ and $x_{n-1}$ is an (or parabolic) attracting fixed point of $U_{d\lambda}$. Moreover, $[x_{n-1},x_n)\subset\mathcal{A}_{d\lambda}(x_{n-1})$, where $\mathcal{A}_{d\lambda}(x_{n-1})$ is the immediate attracting (or parabolic) basin of $x_{n-1}$. Let $V_1=\mathcal{A}_{d\lambda}(x_{n-1})$ and $V_2=\mathcal{A}_{d\lambda}(\infty)$. We have $x_n\in\overline{V}_1\cap\overline{V}_2$.

(2) Let $\lambda<0$. If $0\leq x\leq 1$, then $x-1\leq 0$ and $x+\lambda-1<0$. If $d\geq 2$ is even, then $(x+\lambda-1)^d+(\lambda-1)(x-1)^d>0$, $(x+\lambda-1)^d-(x-1)^d>0$ and $U_{d\lambda}'(x)\geq 0$. If $d\geq 2$ is odd, then $U_{d\lambda}'(x)\geq 0$. This means that $U_{d\lambda}$ is increasing on $[0,1]$ for every $d\geq 2$. Moreover, $U_{d\lambda}'(x)= 0$ if and only if $x=1$. By a straightforward calculation, we have $0<U_{d\lambda}(0)<1$. Now we divide the arguments into two cases.

If there exists no fixed point of $U_{d\lambda}$ in $(0,1)$, then we have $0<x<U_{d\lambda}(x)<1$ for every $0<x<1$. This means that $0$ lies in the immediate attracting basin of $1$. By Lemma \ref{equiv-condi}(5), we know that $J_{d\lambda}$ is a quasicircle. In particular, $\overline{\mathcal{A}_{d\lambda}(1)}\cap\overline{\mathcal{A}_{d\lambda}(\infty)}=J_{d\lambda}\neq\emptyset$. If there exists at least one fixed point of $U_{d\lambda}$ in $(0,1)$, we denote all of them by $0<x_1<\cdots<x_n<1$, where $n\geq 1$. By a completely similar argument as the case $\lambda>0$, one can show that the fixed point $x_n$ is contained in the boundaries of two different Fatou components. Therefore, the proof is complete.
\end{proof}

\begin{thm}\label{thm-no-sier-resta}
For every $d\geq 2$ and $\lambda\in \R$, the Julia set $J_{d\lambda}$ is not a Sierpi\'{n}sk carpet.
\end{thm}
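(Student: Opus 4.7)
The plan is to argue by contradiction, using Lemma \ref{lema-non-empty} as the essential input. Suppose that $J_{d\lambda}$ is a Sierpi\'{n}ski carpet. Then by definition
\[
J_{d\lambda}=\overline{\mathbb{C}}\setminus\bigcup_{i\in\mathbb{N}}D_i,
\]
where the $D_i$ are Jordan regions with pairwise disjoint boundaries $\partial D_i\cap\partial D_j=\emptyset$ for $i\neq j$, with spherical diameters tending to $0$. The first key step is to identify the $D_i$ with the Fatou components of $U_{d\lambda}$. Since each $D_i$ is open and connected, it is contained in a Fatou component; conversely, since $\bigcup_i D_i$ is exactly the complement of $J_{d\lambda}$, i.e.\ the Fatou set, each Fatou component is a union of some of the $D_i$'s. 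The disjointness of the $\partial D_i$'s forces distinct $D_i$'s to be separated by the Julia set, so each Fatou component is in fact a single $D_i$, and the $\{D_i\}$ are precisely the Fatou components.

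With this identification in hand, the second step is the immediate application of Lemma \ref{lema-non-empty}: it provides two Fatou components $V_1$ and $V_2$ with $\overline{V}_1\cap\overline{V}_2\neq\emptyset$. Say $V_1=D_i$ and $V_2=D_j$ with $i\neq j$. Since $D_i$ and $D_j$ are disjoint open sets, any point of $\overline{D}_i\cap\overline{D}_j$ must lie in $\partial D_i\cap\partial D_j$. Hence $\partial D_i\cap\partial D_j\neq\emptyset$, which contradicts the defining property of a Sierpi\'{n}ski carpet. This contradiction shows $J_{d\lambda}$ is not a Sierpi\'{n}ski carpet.

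The argument is essentially a single-step deduction from Lemma \ref{lema-non-empty}, and the main obstacle was proving that lemma (which was the real content of the section, established by a careful case analysis of the real dynamics of $U_{d\lambda}$ restricted to $\mathbb{R}$, finding a fixed point on the real axis that is accessible from both the basin of $\infty$ and from another attracting or parabolic Fatou component). The only subtlety in the present deduction is the identification of the complementary Jordan regions of a carpet with the Fatou components; once this is in place, the conclusion is immediate.
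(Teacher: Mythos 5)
Your proposal is correct and follows exactly the paper's route: the paper's own proof is the one-line observation that a Sierpi\'{n}ski carpet Julia set would force the closures of any two Fatou components to be disjoint, contradicting Lemma \ref{lema-non-empty}. Your additional step identifying the complementary Jordan regions $D_i$ with the Fatou components (via connectedness of Fatou components and disjointness of the $D_i$) is a sound and welcome elaboration of what the paper leaves implicit.
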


\begin{proof}
Note that if $J_{d\lambda}$ is a Sierpi\'{n}ski carpet, then the closure of any two Fatou components of $U_{d\lambda}$ cannot be intersecting to each other. But this contradicts Lemma \ref{lema-non-empty}. The proofs of Theorems \ref{thm-no-sier-resta} and \ref{thm-no-sier} are finished.
\end{proof}

\begin{rmk}
By computer experiments, it is shown that $\overline{\mathcal{A}_{d\lambda}(1)}\cap\overline{\mathcal{A}_{d\lambda}(\infty)}=\{z_0\}$ for $\lambda\in\C$, where $z_0$ is a repelling fixed point of $U_{d\lambda}$. Therefore, the Julia set $J_{d\lambda}$ can never be a Sierpi\'{n}sk carpet for any $\lambda\in\C$ (see Figures \ref{Fig_no-carpet} and \ref{Fig_Julia-Siegel}).
\end{rmk}

\section{Decomposition of the parameter space}\label{sec-decom}

In this section, we divide the parameter space of $T_{d\lambda}$ into the non-escaping locus $\MM_d$ union countably many capture domains. Recall that $\mathcal{A}_{d\lambda}(1)$ and $\mathcal{A}_{d\lambda}(\infty)$ are the immediate superattracting basins of $1$ and $\infty$ respectively.

\begin{lema}\label{equiv-condi}
For each $\lambda\in\C^*$, the following conditions are equivalent:

$(1)$ The Julia set $J_{d\lambda}$ of $T_{d\lambda}$ is a quasicircle;
$(2)$ $\xi_k\in\mathcal{A}_{d\lambda}(\infty)$ for all $0\leq k\leq d-1$;
$(3)$ $\omega_k\in\mathcal{A}_{d\lambda}(1)$ for all $0\leq k\leq d-1$;
$(4)$ $1-\lambda\in\mathcal{A}_{d\lambda}(\infty)$;
$(5)$ $0\in\mathcal{A}_{d\lambda}(1)$.

In particular, $\omega_k\in\mathcal{A}_{d\lambda}(1)$ if and only if $\omega_l\in\mathcal{A}_{d\lambda}(1)$, where $0\leq k,l\leq d-1$.
\end{lema}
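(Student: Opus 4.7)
The plan is to take condition (4) as a hub and prove each of (1), (2), (3), (5) equivalent to it. The structural facts I will rely on are: $T_{d\lambda}$ exchanges the two basins in the sense $T_{d\lambda}(\mathcal{A}_{d\lambda}(\infty))\subset\mathcal{A}_{d\lambda}(1)$ and $T_{d\lambda}(\mathcal{A}_{d\lambda}(1))\subset\mathcal{A}_{d\lambda}(\infty)$; both restrictions are proper branched coverings between simply connected disks, simple connectivity coming from Theorem \ref{J-connected}; and $1$ and $1-\lambda$ are the only critical points of $T_{d\lambda}$, each of local degree $d$.

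The equivalence (4) $\Leftrightarrow$ (5) is quickest. Since $0=T_{d\lambda}(1-\lambda)$ and $T_{d\lambda}$ sends $\mathcal{A}_{d\lambda}(\infty)$ into $\mathcal{A}_{d\lambda}(1)$, (4) $\Rightarrow$ (5) is automatic. For the converse, let $V$ be the Fatou component of $U_{d\lambda}$ containing $1-\lambda$. Because $1-\lambda$ is a $T_{d\lambda}$-critical point of local degree $d$, the proper map $T_{d\lambda}\colon V\to T_{d\lambda}(V)$ has degree at least $d$, hence exactly $d$ (the global degree of $T_{d\lambda}$), so $V$ equals the full $T_{d\lambda}$-preimage of $T_{d\lambda}(V)$. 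Condition (5) forces $T_{d\lambda}(V)=\mathcal{A}_{d\lambda}(1)$, and since $\mathcal{A}_{d\lambda}(\infty)$ also maps into $\mathcal{A}_{d\lambda}(1)$, we get $\mathcal{A}_{d\lambda}(\infty)\subset V$ and finally $V=\mathcal{A}_{d\lambda}(\infty)$.

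The equivalences (4) $\Leftrightarrow$ (2) and (4) $\Leftrightarrow$ (3) will follow from Riemann-Hurwitz applied to the two restrictions above. A central observation is that for $d\geq 2$ the two $T_{d\lambda}$-critical points cannot lie in the same basin: otherwise Riemann-Hurwitz would require that restriction to have degree $\geq 1+2(d-1)=2d-1$, exceeding the global degree $d$. Since $1\in\mathcal{A}_{d\lambda}(1)$, this forces $1-\lambda\notin\mathcal{A}_{d\lambda}(1)$ unconditionally. Under (4), $\mathcal{A}_{d\lambda}(\infty)$ contains exactly one $T_{d\lambda}$-critical point of branch order $d-1$, so $1=d'-(d-1)$ gives full degree $d'=d$ for the restriction, and symmetrically for $\mathcal{A}_{d\lambda}(1)$; consequently all $d$ preimages $\xi_k$ of $1$ fall inside $\mathcal{A}_{d\lambda}(\infty)$ and all $d$ preimages $\omega_k$ of $1-\lambda$ fall inside $\mathcal{A}_{d\lambda}(1)$, giving (2) and (3). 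If (4) fails, neither $T_{d\lambda}$-critical point lies in $\mathcal{A}_{d\lambda}(\infty)$, so $T_{d\lambda}\colon\mathcal{A}_{d\lambda}(\infty)\to\mathcal{A}_{d\lambda}(1)$ is an unramified degree-one covering and only $\xi_0=\infty$ lies in $\mathcal{A}_{d\lambda}(\infty)$; likewise no $\omega_k$ can lie in $\mathcal{A}_{d\lambda}(1)$, for then $1-\lambda=T_{d\lambda}(\omega_k)$ would lie in $\mathcal{A}_{d\lambda}(\infty)$. This also proves the ``in particular'' clause, since membership of any single $\omega_k$ in $\mathcal{A}_{d\lambda}(1)$ is equivalent to (4), a $k$-independent condition.

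For (4) $\Leftrightarrow$ (1), the forward direction combines the equivalences just established: every critical point of $U_{d\lambda}$ listed in \eqref{Crit-U-lambda} then lies in $\mathcal{A}_{d\lambda}(1)\cup\mathcal{A}_{d\lambda}(\infty)$, so $U_{d\lambda}$ is hyperbolic, and Sullivan's classification of Fatou components together with Lemma \ref{no-Herman} rules out any other periodic component, hence any Fatou component at all. The Fatou set therefore consists of exactly two simply connected pieces (Theorem \ref{J-connected}), and $J_{d\lambda}$ is their common boundary; the standard fact that a hyperbolic rational map whose Julia set is a Jordan curve has a quasicircle Julia set then gives (1). Conversely, if (1) holds, the two complementary disks of $J_{d\lambda}$ must be $\mathcal{A}_{d\lambda}(1)$ and $\mathcal{A}_{d\lambda}(\infty)$, so $1-\lambda$ lies in one of them; the Riemann-Hurwitz obstruction above excludes $\mathcal{A}_{d\lambda}(1)$, yielding (4). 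I expect the only delicate point to be invoking, rather than reproving, the hyperbolic-dynamics fact that a Jordan-curve Julia set of a hyperbolic rational map is automatically a quasicircle.
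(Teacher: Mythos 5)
Your argument is correct in substance and uses the same essential tool as the paper -- Riemann--Hurwitz degree counts on the two immediate basins, granting simple connectivity from Theorem \ref{J-connected} -- merely rewired with (4) as a hub instead of the paper's cycle $(1)\Rightarrow(2)(3)(4)(5)$, $(3)\Rightarrow(4)\Rightarrow(5)\Rightarrow(1)$, $(2)\Rightarrow(4)$. One step, however, is justified by a false general principle: in the direction $(4)\Rightarrow(1)$ you write that ruling out other \emph{periodic} Fatou components rules out ``any Fatou component at all.'' That inference fails in general (for $z^2-1$ the only periodic components are the two immediate basins of the $2$-cycle, yet the Fatou set has infinitely many components); what is actually needed is that $\mathcal{A}_{d\lambda}(1)\cup\mathcal{A}_{d\lambda}(\infty)$ is \emph{completely invariant}, so that no strictly preperiodic preimage components can exist. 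Fortunately you have already proved exactly this: your Riemann--Hurwitz count shows both restrictions $T_{d\lambda}\colon\mathcal{A}_{d\lambda}(\infty)\to\mathcal{A}_{d\lambda}(1)$ and $T_{d\lambda}\colon\mathcal{A}_{d\lambda}(1)\to\mathcal{A}_{d\lambda}(\infty)$ have full degree $d$, hence $T_{d\lambda}^{-1}(\mathcal{A}_{d\lambda}(1))=\mathcal{A}_{d\lambda}(\infty)$ and $T_{d\lambda}^{-1}(\mathcal{A}_{d\lambda}(\infty))=\mathcal{A}_{d\lambda}(1)$, and since hyperbolicity forces every Fatou component to land eventually in an immediate basin, the Fatou set is exactly these two disks. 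This is precisely how the paper closes the argument (it counts the $d+1$ critical points of $U_{d\lambda}$ of total multiplicity $d^2-1$ inside each basin to get complete invariance under $U_{d\lambda}$); reroute your sentence through that observation and the proof is complete.
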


\begin{proof}
We first prove $(1)\Rightarrow (2)(3)(4)(5)$. If $J_{d\lambda}$ is a quasicircle, the Fatou set of $T_{d\lambda}$ consists of two simply connected Fatou components $A_{d\lambda}(1)$ and $A_{d\lambda}(\infty)$ whose common boundary is $J_{d\lambda}$. Since $T_{d\lambda}$ permutes $1$ and $\infty$, by \eqref{Orbit-Crit}, it follows that (2) holds and $\{\omega_1,\cdots,\omega_d\}$ lies in a single Fatou component. Applying the Riemann-Hurwitz formula to $U_{d\lambda}:\A_{d\lambda}(\infty)\rightarrow\A_{d\lambda}(\infty)$, it follows that $\{\omega_1,\cdots,\omega_d,0\}\subset\A_{d\lambda}(1)$ and $1-\lambda\in \A_{d\lambda}(\infty)$. Therefore, (3)(4)(5) hold.

By \eqref{Orbit-Crit}, we have $(3)\Rightarrow (4)\Rightarrow (5)$. Now we prove $(5)\Rightarrow (1)$. Suppose that $0\in\mathcal{A}_{d\lambda}(1)$. By \eqref{pre-1}, we have $U_{d\lambda}^{-1}(0)=\bigcup_{k=0}^{d-1}\{\omega_k\}$. Since $U_{d\lambda}(\A_{d\lambda}(1))=\A_{d\lambda}(1)$, there exists some $k_0$ such that $\omega_{k_0}\in\A_{d\lambda}(1)$ and hence $1-\lambda\in\A_{d\lambda}(\infty)$. Note that $T_{d\lambda}:\A_{d\lambda}(1)\rightarrow \A_{d\lambda}(\infty)$ is $d$ to $1$. We claim that $\omega_k\in\A_{d\lambda}(1)$ for every $0\leq k\leq d-1$. In fact, if not, then $1-\lambda$ has at least $d+1$ preimages under $T_{d\lambda}$ (counted with multiplicity, $d$ in $\A_{d\lambda}(1)$ and at least one elsewhere), which is impossible. The same argument also shows that $\omega_k\in\mathcal{A}_{d\lambda}(1)$ if and only if $\omega_l\in\mathcal{A}_{d\lambda}(1)$, where $0\leq k,l\leq d-1$. Then, $\A_{d\lambda}(1)$ contains critical points $\{\omega_1,\cdots,\omega_d,1\}$ of $U_{d\lambda}$. This means that $\A_{d\lambda}(1)$ is completely invariant under $U_{d\lambda}$.

Since $1-\lambda\in\A_{d\lambda}(\infty)$, it means that $T_{d\lambda}:\A_{d\lambda}(\infty)\rightarrow \A_{d\lambda}(1)$ is $d$ to $1$. Therefore, $\xi_k\in\A_{d\lambda}(\infty)$ for every $1\leq k\leq d-1$ since $\xi_0=\infty\in\A_{d\lambda}(\infty)$ and $T_{d\lambda}(\xi_k)=1$. Moreover, $\A_{d\lambda}(\infty)$ contains critical points $\{\xi_1,\cdots,\xi_d,1-\lambda\}$ of $U_{d\lambda}$. This means that $\A_{d\lambda}(\infty)$ is also completely invariant under $U_{d\lambda}$. Therefore, $J_{d\lambda}$ is a quasicircle since $T_{d\lambda}$ is hyperbolic and $T_{d\lambda}$ has exactly two Fatou components. This ends the proof of $(5)\Rightarrow (1)$.

To finish, we prove $(2)\Rightarrow (4)$. If $\xi_k\in\mathcal{A}_{d\lambda}(\infty)$ for all $0\leq k\leq d-1$, then $T_{d\lambda}:\mathcal{A}_{d\lambda}(\infty)\rightarrow\mathcal{A}_{d\lambda}(1)$ is $d$ to $1$. This means that $1-\lambda\in\A_{d\lambda}$ by Riemann-Hurwitz formula. The proof is complete.
\end{proof}

\begin{lema}\label{lema-0-1-lambda}
For every $\lambda\in\C^*$, we have $0\not\in\A_{d\lambda}(\infty)$ and $1-\lambda\not\in\A_{d\lambda}(1)$.
\end{lema}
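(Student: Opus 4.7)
My plan is to exploit the two-component dynamics of $T_{d\lambda}$ between the Fatou components $V:=\A_{d\lambda}(1)$ and $W:=\A_{d\lambda}(\infty)$ and to use a Riemann--Hurwitz count to rule out the presence of the free critical point of $T_{d\lambda}$ inside $V$. By Theorem~\ref{J-connected-stated-again} the Julia set is connected, so both $V$ and $W$ are simply connected. Since $T_{d\lambda}(1)=\infty$ and Fatou components are sent onto Fatou components, the restriction $T_{d\lambda}\colon V\to W$ is a proper holomorphic map of some degree $\delta\le d=\deg T_{d\lambda}$.

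I would first prove the second assertion $1-\lambda\notin V$. Recall $\Crit(T_{d\lambda})=\{1,1-\lambda\}$, each of local degree $d$, and $1\in V$. Applying the Riemann--Hurwitz formula to $T_{d\lambda}\colon V\to W$ gives $1=\delta-(d-1)-\varepsilon(d-1)$, where $\varepsilon=1$ if $1-\lambda\in V$ and $\varepsilon=0$ otherwise. If $\varepsilon=1$ then $\delta=2d-1$, contradicting $\delta\le d$ for $d\ge 2$. Hence $\varepsilon=0$, which simultaneously gives $\delta=d$; in particular, $V$ is the full $T_{d\lambda}$-preimage of $W$.

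For the first assertion $0\notin W$, suppose to the contrary that $0\in W$. Then every $T_{d\lambda}$-preimage of $0$ would lie in $T_{d\lambda}^{-1}(W)=V$. But a direct inspection of the formula for $T_{d\lambda}$ shows that $T_{d\lambda}(z)=0$ forces $(z+\lambda-1)^d=0$, so $T_{d\lambda}^{-1}(0)=\{1-\lambda\}$. Thus $1-\lambda\in V$, contradicting what was just established. Therefore $0\notin W$, and the lemma follows.

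The main obstacle, if any, is noticing that the cleanest viewpoint is the period-two cycle $1\leftrightarrow\infty$ of $T_{d\lambda}$ rather than the fixed-point picture for $U_{d\lambda}=T_{d\lambda}\circ T_{d\lambda}$: this reduces the Riemann--Hurwitz bookkeeping to a single simple disk-to-disk map with at most two candidate critical points, whereas working directly with $U_{d\lambda}$ would entangle the many critical points $\xi_k,\omega_k$ listed in \eqref{Crit-U-lambda} and obscure the elementary degree obstruction.
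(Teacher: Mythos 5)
Your proof is correct and takes essentially the same route as the paper's: both rest on the Riemann--Hurwitz count showing that $\A_{d\lambda}(1)$ cannot contain both critical points $1$ and $1-\lambda$ (which would force the restriction $T_{d\lambda}\colon\A_{d\lambda}(1)\to\A_{d\lambda}(\infty)$ to have degree $2d-1>d$), combined with the orbit relation $T_{d\lambda}(1-\lambda)=0$. The only difference is the order of deduction---you establish $1-\lambda\notin\A_{d\lambda}(1)$ first and then deduce $0\notin\A_{d\lambda}(\infty)$, whereas the paper argues in the reverse direction---which is immaterial.
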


\begin{proof}
If $0\in\A_{d\lambda}(\infty)$, then $1-\lambda\in\A_{d\lambda}(1)$ by \eqref{Orbit-Crit}. Note that $1$ lies also in $\A_{d\lambda}(1)$. This means that $T_{d\lambda}$ has $2d-1$ preimages in $\A_{d\lambda}(1)$ for each point in $\A_{d\lambda}(\infty)$ by Riemann-Hurwitz formula, which is a contradiction. Moreover, $0\not\in\A_{d\lambda}(\infty)$ means $1-\lambda\not\in\A_{d\lambda}(1)$ by \eqref{Orbit-Crit}.
\end{proof}

Since $1$ and $\infty$ are always periodic with period 2 under $T_{d\lambda}$, the \emph{non-escaping locus} $\MM_d$ associated to $T_{d\lambda}$ can be defined as
\begin{equation}\label{Mandel-new}
\MM_d=\{\lambda\in\C^*:T_{d\lambda}^{\circ 2n}(0)\not\rightarrow 1 \text{~and~} T_{d\lambda}^{\circ 2n+1}(0)\not\rightarrow 1 \text{~as~}n\rightarrow\infty\}\cup\{0\}.
\end{equation}

\begin{defi}\label{def-H-n}
Define $\mathcal{H}_0:=\{\lambda\in\C^*:0\in\A_{d\lambda}(1)\}$. For every $n\geq 1$, define
\begin{equation}
\mathcal{H}_n:=\{\lambda\in\C^*:T_{d\lambda}^{\circ n}(0)\in\A_{d\lambda}(1)\text{~and~}T_{d\lambda}^{\circ n-1}(0)\not\in\A_{d\lambda}(\infty)\}.
\end{equation}
Each component of $\mathcal{H}_n$ is called a \emph{capture domain} of \emph{depth} $n$, where $n\geq 0$.
\end{defi}

\begin{prop}\label{prop-decp}
The parameter space of $T_{d\lambda}$ has the following decomposition:
\begin{equation}
\C=\MM_d\sqcup (\bigsqcup_{n\geq 0}\mathcal{H}_n).
\end{equation}
\end{prop}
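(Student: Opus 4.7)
The plan is to establish the decomposition by verifying three things in turn: the sets $\mathcal{H}_n$ are pairwise disjoint, they are disjoint from $\MM_d$, and together they exhaust $\C\setminus\MM_d$. The argument hinges on the following key observation, which I would prove first: for every $\lambda\in\C^*$, $T_{d\lambda}^{-1}(\mathcal{A}_{d\lambda}(\infty))=\mathcal{A}_{d\lambda}(1)$. Indeed, $T_{d\lambda}(1)=\infty$ gives $\mathcal{A}_{d\lambda}(1)\subset T_{d\lambda}^{-1}(\mathcal{A}_{d\lambda}(\infty))$; by Lemma \ref{lema-0-1-lambda}, $1-\lambda\notin\mathcal{A}_{d\lambda}(1)$, so the only critical point of $T_{d\lambda}$ in $\mathcal{A}_{d\lambda}(1)$ is $1$, with multiplicity $d-1$. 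Since both Fatou components are simply connected by Theorem \ref{J-connected-stated-again}, applying Riemann--Hurwitz to $T_{d\lambda}\colon\mathcal{A}_{d\lambda}(1)\to\mathcal{A}_{d\lambda}(\infty)$ yields degree $d$, which equals the global degree of $T_{d\lambda}$. Hence $\mathcal{A}_{d\lambda}(1)$ exhausts the full preimage.

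Pairwise disjointness of the $\mathcal{H}_n$ follows once this is in hand: if $\lambda\in\mathcal{H}_m\cap\mathcal{H}_n$ with $n>m$, then $T_{d\lambda}^m(0)\in\mathcal{A}_{d\lambda}(1)$ forces $T_{d\lambda}^k(0)$ to alternate between $\mathcal{A}_{d\lambda}(1)$ and $\mathcal{A}_{d\lambda}(\infty)$ for $k\geq m$. The requirement $T_{d\lambda}^n(0)\in\mathcal{A}_{d\lambda}(1)$ then forces $n-m$ to be even, which places $T_{d\lambda}^{n-1}(0)$ in $\mathcal{A}_{d\lambda}(\infty)$ and contradicts the definition of $\mathcal{H}_n$. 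Disjointness from $\MM_d$ is immediate from \eqref{Mandel-new}: if $\lambda\in\mathcal{H}_n$ then $T_{d\lambda}^{n+2k}(0)\to 1$, so $\lambda\notin\MM_d$.

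For exhaustiveness, fix $\lambda\in\C^*\setminus\MM_d$. Then either $T_{d\lambda}^{2n}(0)\to 1$ or $T_{d\lambda}^{2n+1}(0)\to 1$, so the forward orbit of $0$ eventually enters $\mathcal{A}_{d\lambda}(1)$. Let $n\geq 0$ be the smallest index with $T_{d\lambda}^n(0)\in\mathcal{A}_{d\lambda}(1)$; if $n=0$ then $\lambda\in\mathcal{H}_0$, so assume $n\geq 1$. It remains to verify $T_{d\lambda}^{n-1}(0)\notin\mathcal{A}_{d\lambda}(\infty)$. If $n=1$ this is Lemma \ref{lema-0-1-lambda}. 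If $n\geq 2$, the key observation gives $T_{d\lambda}^{n-1}(0)\in\mathcal{A}_{d\lambda}(\infty)\Rightarrow T_{d\lambda}^{n-2}(0)\in T_{d\lambda}^{-1}(\mathcal{A}_{d\lambda}(\infty))=\mathcal{A}_{d\lambda}(1)$, contradicting the minimality of $n$. Hence $\lambda\in\mathcal{H}_n$, completing the decomposition.

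The main obstacle I anticipate is not computational but conceptual: the extra condition $T_{d\lambda}^{n-1}(0)\notin\mathcal{A}_{d\lambda}(\infty)$ in the definition of $\mathcal{H}_n$ initially raises the worry that some non-escaping-like parameter might slip through every $\mathcal{H}_n$ by having the orbit of $0$ enter $\mathcal{A}_{d\lambda}(\infty)$ ``too early.'' The key observation rules this out by showing that $\mathcal{A}_{d\lambda}(1)$ is the unique Fatou component mapping to $\mathcal{A}_{d\lambda}(\infty)$ under $T_{d\lambda}$, so the orbit of $0$ can only access the superattracting cycle $\{1,\infty\}$ through $\mathcal{A}_{d\lambda}(1)$ first.
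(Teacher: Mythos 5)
Your proof is correct and follows essentially the same route as the paper's: pairwise disjointness via the forced alternation of the orbit of $0$ between $\mathcal{A}_{d\lambda}(1)$ and $\mathcal{A}_{d\lambda}(\infty)$ once it enters $\mathcal{A}_{d\lambda}(1)$, and exhaustiveness via the minimal entry time into $\mathcal{A}_{d\lambda}(1)$. Your explicit verification that $T_{d\lambda}^{-1}(\mathcal{A}_{d\lambda}(\infty))=\mathcal{A}_{d\lambda}(1)$ (using Lemma \ref{lema-0-1-lambda} and Riemann--Hurwitz) supplies the justification that the paper leaves implicit in the step ``if $T_{d\lambda}^{\circ k-1}(0)\in\mathcal{A}_{d\lambda}(\infty)$ then $T_{d\lambda}^{\circ k-2}(0)\in\mathcal{A}_{d\lambda}(1)$.''
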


\begin{proof}
By definitions of the non-escaping locus and $\MH_n$, we have $\MM_d\cap (\bigcup_{n\geq 0}\mathcal{H}_n)=\emptyset$. We need to show that two capture domains with different depths are disjoint and each $\lambda\in\C\setminus\MM$ belongs to $\mathcal{H}_n$ for some $n\geq 0$. First, suppose that $\lambda\in \MH_m\cap\MH_n$ for $m\neq n$. Without loss of generality, assume that $m>n\geq 0$. By Definition \ref{def-H-n}, we have $T_{d\lambda}^{\circ n}(0)\in\A_{d\lambda}(1)$ and $T_{d\lambda}^{\circ m-1}(0)\not\in\A_{d\lambda}(\infty)$. This means that $T_{d\lambda}^{\circ m-1}(0)\in\A_{d\lambda}(1)$ and hence $T_{d\lambda}^{\circ m}(0)\in\A_{d\lambda}(\infty)$, which contradicts $T_{d\lambda}^{\circ m}(0)\in\A_{d\lambda}(1)$. Therefore $\MH_m\cap\MH_n=\emptyset$ for $m\neq n$.

By \eqref{Mandel-new}, if $\lambda\not\in\MM_d$, there exists a minimal $k\geq 0$ such that $T_{d\lambda}^{\circ k}(0)\in\A_{d\lambda}(1)$. If $k=0$, then $\lambda\in\MH_0$. If $k=1$, then $T_{d\lambda}(0)\in\A_{d\lambda}(1)$. Lemma \ref{lema-0-1-lambda} asserts that $0\not\in\A_{d\lambda}(\infty)$. Therefore, $\lambda\in\MH_1$ in this case. If $k\geq 2$, we claim that $T_{d\lambda}^{\circ k-1}(0)\not\in\A_{d\lambda}(\infty)$. In fact, if not, we have $T_{d\lambda}^{\circ k-2}(0)\in\A_{d\lambda}(1)$. This contradicts the choice of the integer $k$. So we have $\lambda\in\MH_k$ in this case. The proof is complete.
\end{proof}

See Figure \ref{Fig_parameter} for the non-escaping loci $\MM_2$ and $\MM_3$. There some capture domains are also clearly visible (blank regions).

\section{Quasiconformal conjugacy classes}\label{sec-quasi-conj}

Let $\mathcal{R}_d$ be the collection of all $T_{d\lambda}$, where $\lambda\in\C^*$. In this section, we give a complete characterization of the quasiconformal conjugacy classes in $\mathcal{R}_d$.

\begin{defi}
Let $\Lambda$ be a complex manifold. A \emph{holomorphic family} of rational maps \emph{parameterized} by $\Lambda$ is a holomorphic map $f_\lambda:\Lambda\times\EC\rightarrow\EC$ such that $f_\lambda(z)$ is a rational map for fixed $\lambda\in\Lambda$ and depends holomorphically on $\lambda\in\Lambda$ for fixed $z\in\EC$.
\end{defi}

The parameter $\lambda\in\Lambda$ is called a $J$-\emph{stable} parameter of a holomorphic family of rational maps $f_\lambda$ if the total number of attracting cycles of $f_\lambda$ is constant in a neighborhood of $\lambda$.

\begin{thm}\label{thm-J-stable}
The boundary $\partial\MM_d$ is the set of parameters such that $T_{d\lambda}$ are not $J$-stable in $\mathcal{R}_d$.
\end{thm}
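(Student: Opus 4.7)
The plan is to invoke the Mañé--Sad--Sullivan theorem, which identifies the $J$-stable parameters of a holomorphic family as those at which every marked critical point is \emph{passive}, i.e.\ the corresponding critical-orbit family $\{\lambda\mapsto f_\lambda^{\circ n}(c(\lambda))\}_{n\geq 0}$ is normal on a neighborhood of $\lambda_0$. I will first reduce passivity of all critical points of $T_{d\lambda}$ to passivity of the single free critical point $0$, and then identify its activity locus with $\partial\MM_d$. For the reduction, \eqref{Crit-U-lambda} and \eqref{Orbit-Crit} show that $1,\infty,\xi_1,\dots,\xi_{d-1}$ land on the persistent superattracting $2$-cycle $\{1,\infty\}$ after at most one iterate (so are unconditionally passive), while $\omega_0,\dots,\omega_{d-1}$ and $1-\lambda$ all hit the orbit of $0$ after at most two iterates (so are passive at $\lambda_0$ iff $0$ is). Hence $T_{d\lambda}$ is $J$-stable at $\lambda_0$ iff $\phi_n(\lambda):=T_{d\lambda}^{\circ n}(0)$ forms a normal family near $\lambda_0$.

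If $\lambda_0\in\MH_k$ for some $k\geq 0$, openness of $\MH_k$ together with Böttcher coordinates at the persistent superattracting points $1$ and $\infty$ yields $\phi_{k+2m}\to 1$ and $\phi_{k+2m+1}\to\infty$ locally uniformly on a neighborhood of $\lambda_0$, giving normality. If $\lambda_0\in\operatorname{int}\MM_d$, choose a connected neighborhood $W\subset\operatorname{int}\MM_d$ of $\lambda_0$. For each $\lambda\in\MM_d$ the orbit of $0$ under $T_{d\lambda}$ cannot meet $\A_{d\lambda}(1)\cup\A_{d\lambda}(\infty)$, because forward invariance of these basins under $U_{d\lambda}=T_{d\lambda}\circ T_{d\lambda}$ would otherwise force $\phi_n(\lambda)$ to converge to $1$ or $\infty$, contradicting the definition of $\MM_d$. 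Using the persistent Böttcher disks around $1$ and $\infty$, I can pick three distinct points that remain inside $\A_{d\lambda}(1)\cup\A_{d\lambda}(\infty)$ uniformly for $\lambda\in W$; they are omitted by every $\phi_n$ on $W$, so Montel's theorem gives normality.

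For the converse, since $\C^*\setminus\MM_d=\bigsqcup_{n\geq 0}\MH_n$ is open by Proposition \ref{prop-decp}, $\MM_d$ is closed and $\partial\MM_d\subset\MM_d$. Fix $\lambda_0\in\partial\MM_d$, choose $\lambda_1\in\MH_k$ arbitrarily close to $\lambda_0$ and a small open $U\subset\MH_k$ with $\lambda_1\in U$. The Böttcher coordinate at $1$ gives $\phi_{k+2m}\to 1$ locally uniformly on $U$. Suppose for contradiction that $\{\phi_n\}$ is normal on some connected neighborhood $W$ of $\lambda_0$ meeting $U$. Extract a subsequence of $\{\phi_{k+2m}\}_m$ converging locally uniformly on $W$ to some holomorphic $g$; since $g\equiv 1$ on $W\cap U$, the identity principle yields $g\equiv 1$ on $W$, so $\phi_{k+2m_l}(\lambda_0)\to 1$ along the subsequence. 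But then some iterate of $0$ under $T_{d\lambda_0}$ enters the open basin $\A_{d\lambda_0}(1)$, and forward invariance of this basin under $U_{d\lambda_0}$ forces $T_{d\lambda_0}^{\circ 2n}(0)\to 1$, contradicting $\lambda_0\in\MM_d$. Hence $\{\phi_n\}$ is not normal near $\lambda_0$ and $\lambda_0$ is not $J$-stable.

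The conceptually delicate step is the reduction to a single free critical orbit: one must simultaneously control the $d$-fold symmetric family of critical points $\omega_0,\dots,\omega_{d-1}$ (all of which feed into the orbit of $0$ with the same tail) and observe that $\xi_1,\dots,\xi_{d-1}$ contribute nothing because they are absorbed by the persistent superattracting cycle $\{1,\infty\}$. Once this reduction is in place, the remainder is a clean Montel/identity-principle argument combined with the forward invariance of the superattracting basins.
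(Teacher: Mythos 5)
Your proposal is correct and follows essentially the same route as the paper: the Ma\~{n}\'{e}--Sad--Sullivan/McMullen characterization of $J$-stability via normality of the critical orbit families, reduction to the single free critical orbit, Montel's theorem on the capture domains and on the interior of $\MM_d$, and a non-normality argument at $\partial\MM_d$ (where your identity-principle step makes explicit what the paper leaves implicit). The only superfluous part is the reduction over the critical points $\xi_k,\omega_k$ of $U_{d\lambda}$: since the family in question is $T_{d\lambda}$, whose only critical points are $1$ and $1-\lambda$, the reduction is immediate from the fact that the orbit of $1$ is the finite cycle $1\leftrightarrow\infty$.
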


\begin{proof}
By \cite[Theorem 4.2]{Mc}, $T_{d\lambda_0}$ is $J$-stable if and only if both critical sequences $\{T_{d\lambda}^{\circ k}(1-\lambda)\}_{k\geq 0}$ and $\{T_{d\lambda}^{\circ k}(1)\}_{k\geq 0}$ are normal for $\lambda$ in a neighborhood of $\lambda_0$. Since $\{T_{d\lambda}^{\circ k}(1)\}_{n\geq 0}$ lies in a finite orbit $1\leftrightarrow \infty$, we only need to consider the orbit of $1-\lambda$. If $\lambda_0\in\MH_n$ for some $n\geq 0$, the orbit of $1-\lambda_0$ will be attracted by the cycle $1\leftrightarrow \infty$. For $\lambda$ close to $\lambda_0$, the orbit of $1-\lambda$ still converges to the cycle $1\leftrightarrow \infty$. By Montel's theorem, $\{T_{d\lambda}^{\circ k}(1-\lambda)\}_{k\geq 0}$ is normal at $\lambda_0$. Similarly, $\{T_{d\lambda}^{\circ k}(1-\lambda)\}_{k\geq 0}$ is normal at each point in the interior of $\MM_d$ since $\{T_{d\lambda}^{\circ k}(1-\lambda)\}_{k\geq 0}$ is disjoint with the attracting basin of $1\leftrightarrow\infty$. This means that $T_{d\lambda}$ is $J$-stable in $\C\setminus\partial\MM_d$.

On the other hand, if $\lambda_0\in\partial\MM_d$, then $\{T_{d\lambda_0}^{\circ k}(1-\lambda)\}_{k\geq 0}$ omits the attracting basin of $1\leftrightarrow\infty$. However, there are arbitrary small perturbation of $\lambda_0$ such that $\{T_{d\lambda}^{\circ k}(1-\lambda)\}_{k\geq 0}$ converges to the cycle $1\leftrightarrow\infty$. This means that $T_{d\lambda}$ is not $J$-stable on $\partial\MM_d$.
\end{proof}

\begin{cor}\label{cor-hyper}
Let $W$ be a component in the interior of $\MM_d$. If there exists $\lambda_0\in W$ such that $1-\lambda_0$ converges to an attracting cycle, then every $\lambda\in W$ also has this property.
\end{cor}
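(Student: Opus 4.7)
The plan is to combine the $J$-stability of the family on $W$ (Theorem \ref{thm-J-stable}) with Fatou's theorem on critical points and the combinatorial structure of $\Crit(U_{d\lambda})$ analyzed in \S\ref{locate-crit}, using the fact that there is essentially only one \emph{free} critical orbit.

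First I would note that $W$ is contained in the interior of $\MM_d$, so by Theorem \ref{thm-J-stable} the family $\{T_{d\lambda}\}_{\lambda\in W}$ is $J$-stable; in particular the total number of attracting cycles is constant along the connected open set $W$. The hypothesis supplies an attracting cycle $C(\lambda_0)$ capturing the orbit of $1-\lambda_0$; this cycle is necessarily distinct from the superattracting $2$-cycle $1\leftrightarrow\infty$, for otherwise $0=T_{d\lambda_0}(1-\lambda_0)$ would be attracted to $\{1,\infty\}$, placing $\lambda_0$ in some capture domain $\MH_n$ and contradicting $\lambda_0\in\MM_d$. A standard holomorphic continuation argument extends $C(\lambda_0)$ to a cycle $C(\lambda)$ of $T_{d\lambda}$ for all $\lambda\in W$, with holomorphic multiplier $\rho(\lambda)$; if $|\rho|$ ever attained $1$ inside $W$ then an attracting cycle would be destroyed, violating the constancy of the count, so $|\rho(\lambda)|<1$ throughout $W$ and $C(\lambda)$ remains attracting.

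Next I would apply Fatou's theorem: the immediate basin of every attracting cycle of a rational map must contain at least one critical point. From \eqref{Crit-U-lambda} the critical set of $U_{d\lambda}$ is $\{1,1-\lambda,\infty\}\cup\{\xi_k\}\cup\{\omega_k\}$, and the orbit relations in \eqref{Orbit-Crit} show that $1,\infty,\xi_1,\ldots,\xi_{d-1}$ all sit in the basin of the superattracting cycle $\{1,\infty\}$, while $\omega_k\mapsto 1-\lambda\mapsto 0$ under $T_{d\lambda}$. Hence every remaining critical point belongs to the grand orbit of $0$, and the additional attracting cycle $C(\lambda)$ is forced to capture at least one of $\{1-\lambda,\omega_0,\ldots,\omega_{d-1}\}$. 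By forward invariance of the basin, this drags the entire $T_{d\lambda}$-orbit of $0$, and in particular the orbit of $1-\lambda$, into the basin of $C(\lambda)$, so $1-\lambda$ converges to $C(\lambda)$ for every $\lambda\in W$.

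The main obstacle, as I see it, is the step guaranteeing that the holomorphically continued cycle $C(\lambda)$ remains attracting (rather than becoming neutral or disappearing) throughout $W$: one has to combine the holomorphy of the multiplier with $J$-stability to rule out $|\rho(\lambda)|=1$. Once this is in hand, the combinatorial accounting of critical orbits provided by \eqref{Crit-U-lambda} and \eqref{Orbit-Crit}, together with the observation that $1-\lambda$ and the $\omega_k$ all feed the same grand orbit, makes the remainder of the argument immediate.
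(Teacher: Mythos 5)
Your proposal is correct and takes essentially the same route as the paper: $J$-stability on $W$ (Theorem~\ref{thm-J-stable}) makes the number of attracting cycles locally, hence globally, constant on $W$, that constant is $2$ because of the cycle attracting $1-\lambda_0$, and Fatou's theorem combined with the critical orbit relations \eqref{Orbit-Crit} and \eqref{Crit-U-lambda} forces the second attracting cycle at every $\lambda\in W$ to capture the free critical orbit of $1-\lambda$ (the paper states only the counting step and leaves this last accounting implicit). The one difference is that you also continue the specific cycle $C(\lambda_0)$ holomorphically and worry about its multiplier reaching modulus $1$; this step is not actually needed, since the constant count already supplies \emph{some} second attracting cycle at every $\lambda\in W$ to which the Fatou argument applies directly, so the obstacle you flag dissolves.
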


\begin{proof}
By Theorem \ref{thm-J-stable}, every $T_{d\lambda}\in W$ is $J$-stable. This means that there exists a small neighborhood of $\lambda$ such the number of attracting cycles is constant. Since $1-\lambda_0$ converges to an attracting cycle, this means that the constant is 2. The corollary follows.
\end{proof}

In the case of Corollary \ref{cor-hyper}, $W$ is called a \emph{hyperbolic} component. Otherwise, $W$ is called a \emph{queer} component. It was generally believed that queer components do not exist. But if they do, then every $T_{d\lambda}$ admits an invariant line field on its Julia set and the Julia set has positive Lebesgue area. See Figures \ref{Fig_no-carpet} and \ref{Fig_Julia-Siegel} for various Julia sets of $J_{d\lambda}$.

\begin{figure}[!htpb]
  \setlength{\unitlength}{1mm}
  \centering
  \includegraphics[width=70mm]{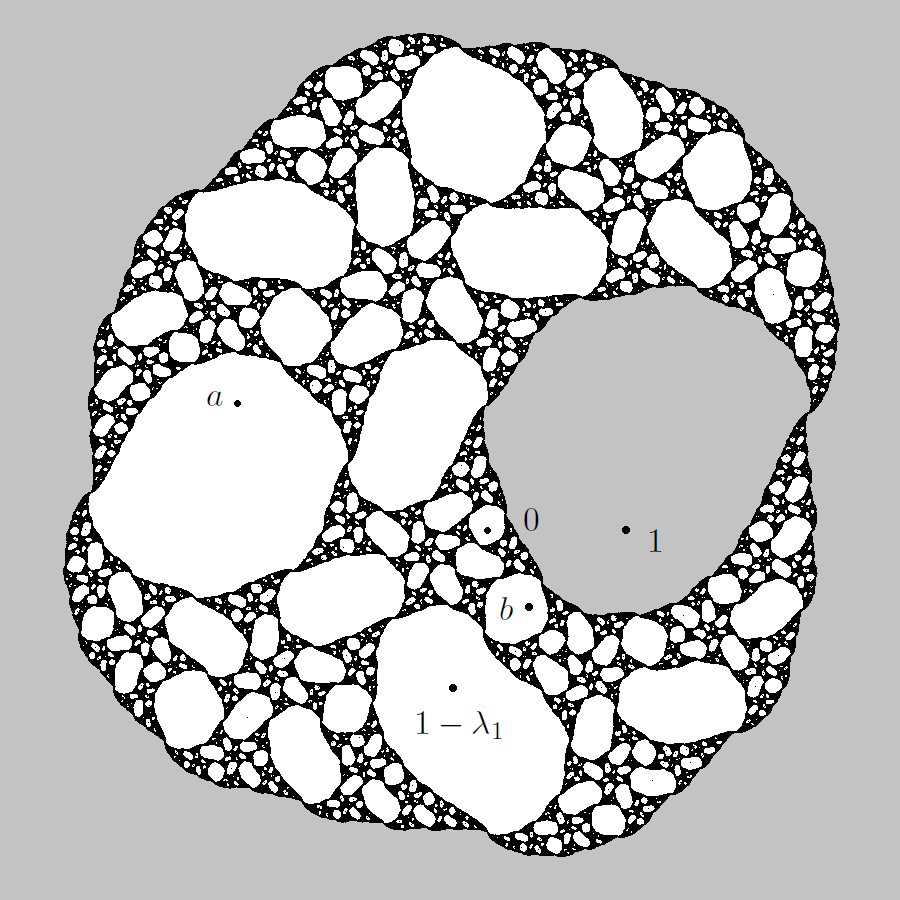}
  \includegraphics[width=70mm]{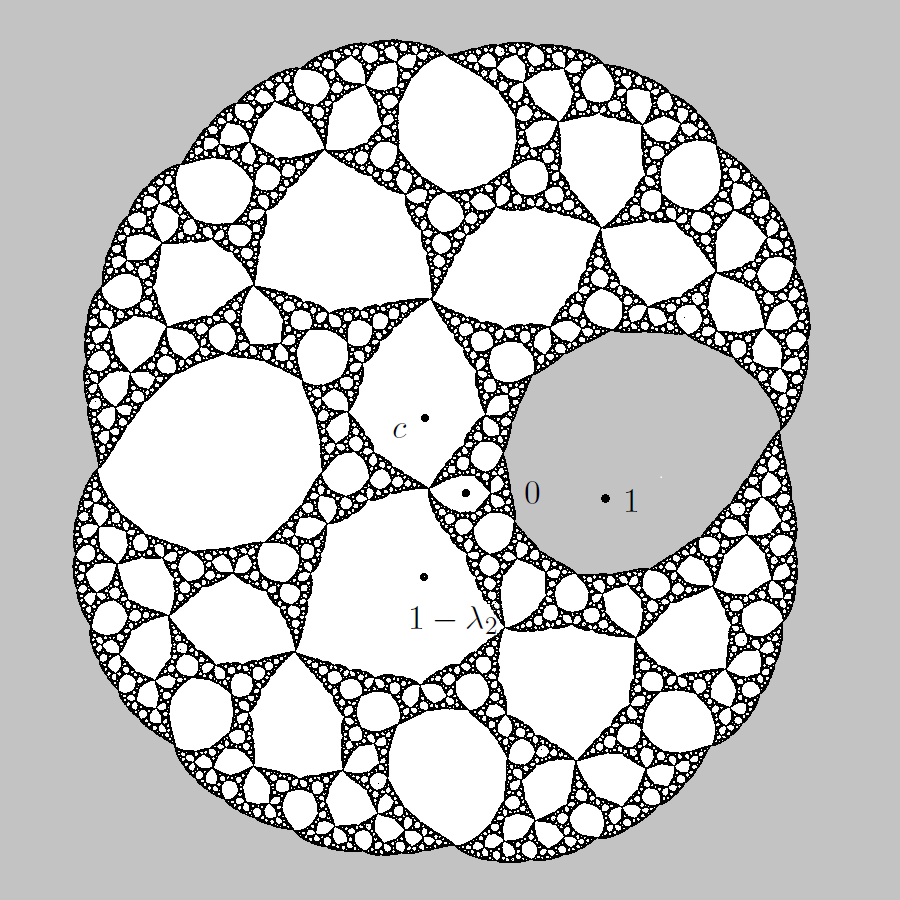}
  \caption{Julia sets of $T_{2\lambda}$ with $\lambda_1\approx 1.319448+1.633170i$ and $\lambda_2\approx 1.5+0.866025i$. The critical orbit $1\leftrightarrow\infty$ captures the critical orbit $1-\lambda_1\mapsto 0\mapsto a\mapsto b\mapsto 1$ and disjoint with the critical orbit $1-\lambda_2\mapsto 0\mapsto c\mapsto 1-\lambda_2$.}
  \label{Fig_no-carpet}
\end{figure}

\begin{figure}[!htpb]
  \setlength{\unitlength}{1mm}
  \centering
  \includegraphics[width=70mm]{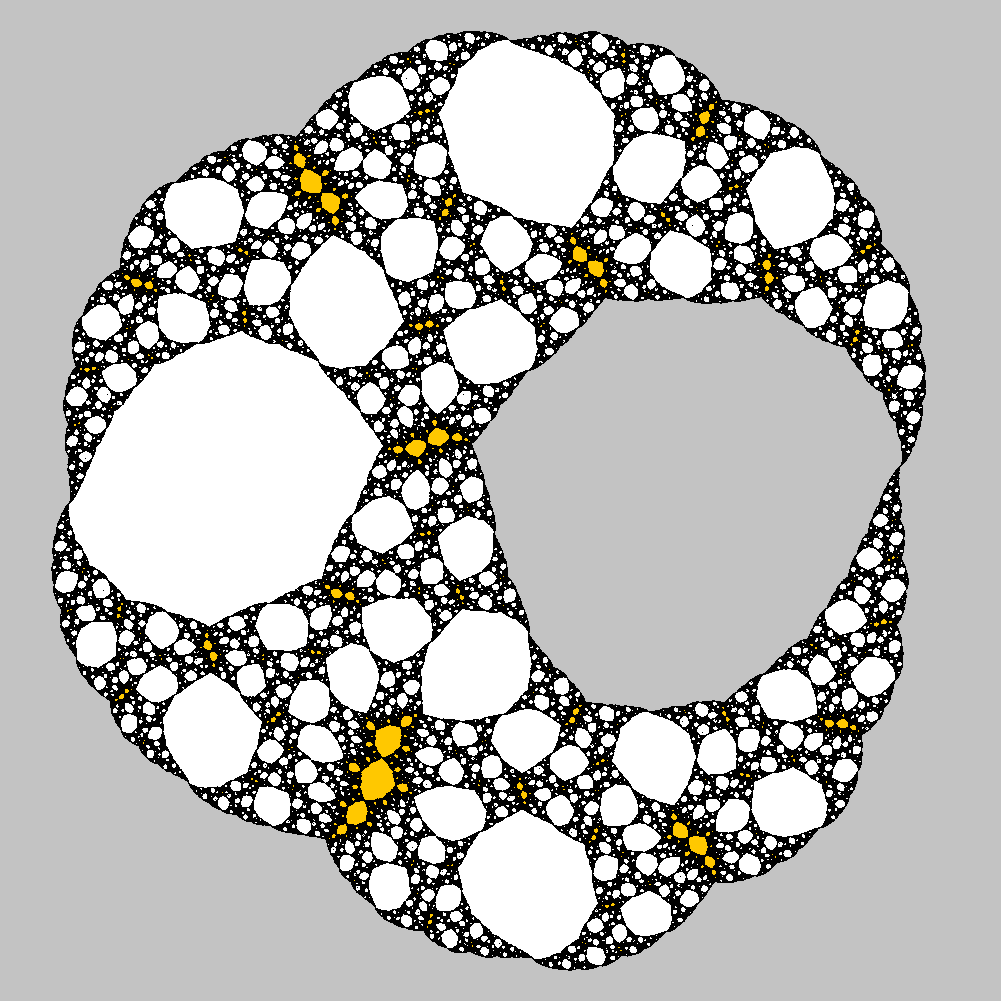}
  \includegraphics[width=70mm]{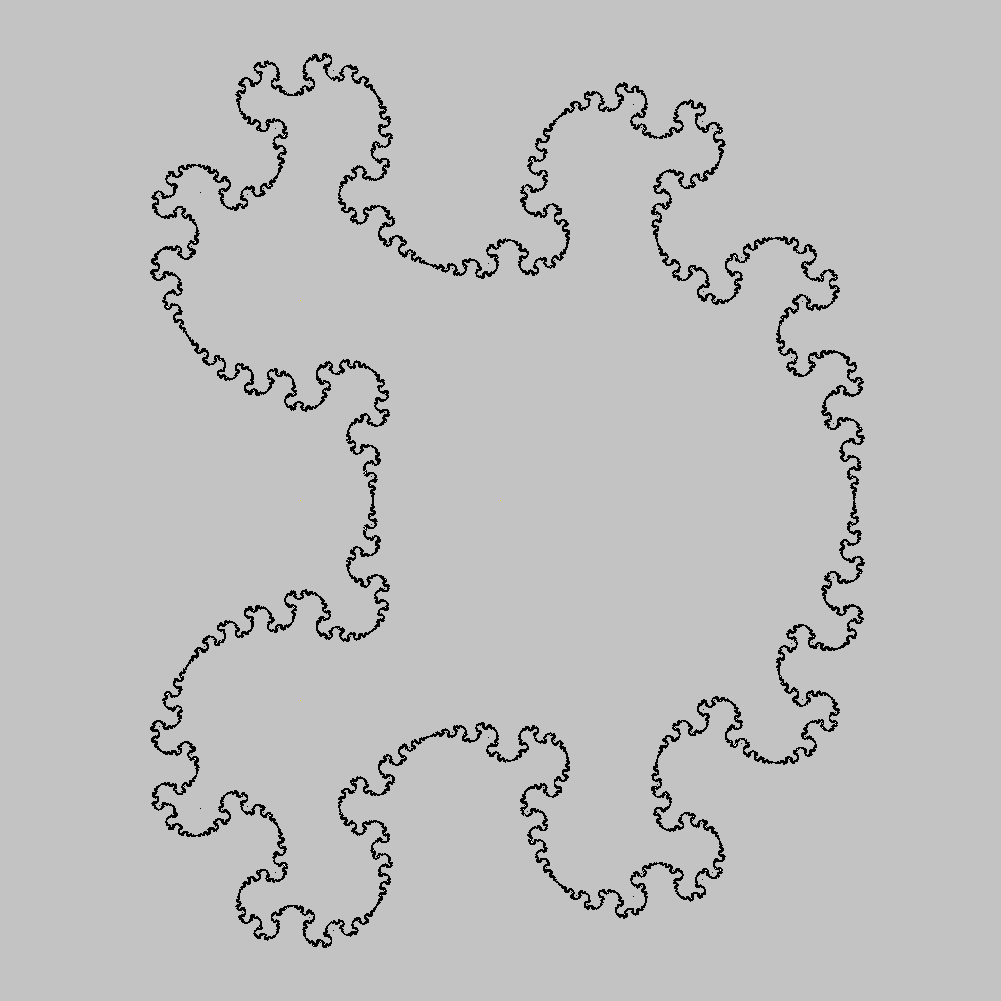}
  \caption{Julia sets of $T_{2\lambda}$ with $\lambda_3\approx 2.046736+1.589069i$ and $\lambda_4=4.0$. $T_{2\lambda_3}$ has a Siegel disk with periodic $4$ and $J_{2\lambda_4}$ is a quasicircle.}
  \label{Fig_Julia-Siegel}
\end{figure}

Now we state a theorem of parameterization of quasiconformal conjugacy classes.

\begin{thm}\label{thm-quasi}
Let $T_{d\lambda_0},T_{d\lambda_1}\in\mathcal{R}_d$ be two different maps and let $\varphi:\EC\rightarrow\EC$ be a $K$ -quasiconformal homeomorphism which conjugates $T_{d\lambda_0}$ to $T_{d\lambda_1}$ such that $\varphi(\lambda_0)=\lambda_1$. Then there exists a holomorphic map $t\mapsto \lambda_t$ from an open disk $\D(0,r)$ $(r>1)$ into $\C^*$ which maps $0$ to $\lambda_0$ and $1$ to $\lambda_1$, such that for every $t\in\D(0,r)$, $T_{d\lambda_0}$ is conjugate to $T_{d\lambda_t}$ by a $K_t$ -quasiconformal mapping $\varphi_t:\EC\rightarrow\EC$. Moreover, $K_t\rightarrow 1$ as $t\rightarrow 0$.
\end{thm}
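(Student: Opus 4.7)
The plan is to deform $\varphi$ along its invariant Beltrami differential via the Ahlfors--Bers procedure. Set $\mu:=\overline{\partial}\varphi/\partial\varphi$, so that $\|\mu\|_\infty=k:=(K-1)/(K+1)<1$ and $r:=1/k>1$. Since $\varphi\circ T_{d\lambda_0}=T_{d\lambda_1}\circ\varphi$ with $T_{d\lambda_1}$ holomorphic, the chain rule for Beltrami coefficients yields $T_{d\lambda_0}^{*}\mu=\mu$ almost everywhere, i.e., $\mu$ is $T_{d\lambda_0}$-invariant. Moreover, $\varphi$ must permute both the $2$-cycle $\{1,\infty\}$ and the critical value set $\{0,\infty\}$ of $T_{d\lambda_0}$; a short check then shows that $\varphi$ necessarily fixes each of $0,1,\infty$ and sends the free critical point $1-\lambda_0$ to $1-\lambda_1$.

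For each $t\in\D(0,r)$ the scaled coefficient $\mu_t:=t\mu$ still satisfies $\|\mu_t\|_\infty<1$ and $T_{d\lambda_0}^{*}\mu_t=\mu_t$. By the measurable Riemann mapping theorem with holomorphic dependence on parameters, there exists a unique quasiconformal $\varphi_t:\EC\to\EC$ solving $\overline{\partial}\varphi_t=\mu_t\,\partial\varphi_t$ and normalized by $\varphi_t(0)=0$, $\varphi_t(1)=1$, $\varphi_t(\infty)=\infty$. The map $t\mapsto\varphi_t(z)$ is holomorphic in $t$ for each fixed $z$; in particular $\varphi_0=\mathrm{id}$, and $\varphi_1=\varphi$ by uniqueness (since $\varphi$ satisfies the very same normalization and Beltrami equation at $t=1$). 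The dilatation is $K_t=(1+|t|k)/(1-|t|k)$, which tends to $1$ as $t\to 0$. Setting $f_t:=\varphi_t\circ T_{d\lambda_0}\circ\varphi_t^{-1}$, the $T_{d\lambda_0}$-invariance of $\mu_t$ makes the Beltrami coefficient of $f_t$ vanish almost everywhere, so $f_t$ is a rational map of degree $d$.

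The principal obstacle is to verify that $f_t$ lies in the family $\mathcal{R}_d$. By construction and the normalization, $f_t$ satisfies $f_t(1)=\infty$ with local degree $d$ (since $1=\varphi_t(1)$ is a critical point of multiplicity $d-1$), $f_t(\infty)=1$ with local degree $1$, and $f_t(q_t)=\varphi_t(0)=0$ with local degree $d$, where $q_t:=\varphi_t(1-\lambda_0)$. The pole of order $d$ at $1$ forces the denominator of $f_t$ to be a scalar multiple of $(z-1)^d$; the value $1$ at infinity with local degree one forces the numerator to be monic of degree $d$; and a zero of order $d$ at $q_t$ then forces the numerator to be exactly $(z-q_t)^d$. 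Therefore
\[
f_t(z)=\left(\frac{z-q_t}{z-1}\right)^{\!d}=T_{d\lambda_t}(z),\qquad \lambda_t:=1-q_t=1-\varphi_t(1-\lambda_0),
\]
and $\lambda_t\in\C^*$ because $q_t\ne 1=\varphi_t(1)$. Holomorphic dependence of $\lambda_t$ on $t$ is inherited from that of $\varphi_t(1-\lambda_0)$; the endpoint conditions $\varphi_0=\mathrm{id}$ and $\varphi_1=\varphi$ with $\varphi(1-\lambda_0)=1-\lambda_1$ yield $\lambda_{t=0}=\lambda_0$ and $\lambda_{t=1}=\lambda_1$, completing the construction of the desired holomorphic disk $t\mapsto\lambda_t$ on $\D(0,r)$.
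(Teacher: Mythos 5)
Your argument is correct and is precisely the standard Ahlfors--Bers deformation that the paper does not write out but delegates to \cite[Theorem 5.1]{Za}: pull out the invariant Beltrami coefficient $\mu$ of $\varphi$, scale it to $t\mu$ on $\D(0,1/\|\mu\|_\infty)$, straighten with the measurable Riemann mapping theorem with parameters, and identify the resulting rational maps inside the family by their critical/pole data. The only step you assert more confidently than it deserves is the ``short check'' that $\varphi$ fixes $0,1,\infty$ (needed to get $\varphi_1=\varphi$ and hence $\lambda_{t=1}=\lambda_1$): it follows because $\varphi$ must carry the degree-$d$ critical point $1$, which lies on the superattracting $2$-cycle $\{1,\infty\}$, to a degree-$d$ critical point of $T_{d\lambda_1}$ lying on a superattracting $2$-cycle, and that forces $\varphi(1)=1$ except in the degenerate situation where the second critical orbit $1-\lambda_1\mapsto 0$ is itself a superattracting $2$-cycle, a case you should exclude or treat separately.
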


The idea of the proof of Theorem \ref{thm-quasi} is standard in holomorphic dynamics. One can refer \cite[Theorem 5.1]{Za} for a proof in the similar situation. As an immediate corollary, we have

\begin{cor}\label{cor-quasi-rid}
Quasiconformal conjugacy classes in $\mathcal{R}_d$ are either single points or open and connected. In particular, the conjugacy classes on $\partial\MM_d$ are single points.
\end{cor}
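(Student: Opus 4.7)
The plan is to deduce the corollary directly from Theorem \ref{thm-quasi} together with the $J$-stability criterion in Theorem \ref{thm-J-stable}. First I would establish openness. Suppose a quasiconformal conjugacy class in $\mathcal{R}_d$ contains two distinct parameters $\lambda_0\neq\lambda_1$. Applying Theorem \ref{thm-quasi} produces a holomorphic map $\Phi:\D(0,r)\to\C^*$ with $\Phi(0)=\lambda_0$, $\Phi(1)=\lambda_1$, such that every $T_{d\lambda_t}$ is quasiconformally conjugate to $T_{d\lambda_0}$. Since $\lambda_0\neq\lambda_1$, the map $\Phi$ is non-constant, hence open by the open mapping theorem, and its image lies entirely in the conjugacy class of $\lambda_0$. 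This produces an open neighborhood of $\lambda_0$ contained in the class; running the same argument at an arbitrary base point shows that the class itself is open.

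For connectedness, given any $\lambda_0,\lambda_1$ in a common class, the restriction of the map $\Phi$ furnished by Theorem \ref{thm-quasi} to the real segment $[0,1]\subset\D(0,r)$ supplies a continuous path in $\C^*$ joining $\lambda_0$ to $\lambda_1$ whose image lies entirely in the class. Hence every conjugacy class is path-connected, and in particular connected.

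For the boundary statement, suppose toward contradiction that some $\lambda_0\in\partial\MM_d$ lies in a non-singleton class. By the openness just proved, there is an open neighborhood $U$ of $\lambda_0$ in which every parameter is quasiconformally conjugate to $T_{d\lambda_0}$. Because a quasiconformal conjugacy is in particular a homeomorphism, it carries attracting cycles of $T_{d\lambda_0}$ to attracting cycles of $T_{d\lambda}$ of the same period, so the total number of attracting cycles of $T_{d\lambda}$ is constant on $U$. By definition this makes $\lambda_0$ a $J$-stable parameter, contradicting Theorem \ref{thm-J-stable}. Therefore every $\lambda_0\in\partial\MM_d$ forms a singleton class.

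The only genuinely delicate ingredient here is Theorem \ref{thm-quasi} itself, whose proof in the analogous setting of \cite{Za} rests on Slodkowski's holomorphic motion theorem and standard Teichm\"uller-theoretic input; once the holomorphic family $\Phi$ is in hand, openness, connectedness, and the boundary dichotomy follow essentially by the open mapping theorem and the topological invariance of attracting cycles under quasiconformal conjugation.
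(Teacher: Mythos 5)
Your argument is correct and is exactly the deduction the paper intends when it calls Corollary \ref{cor-quasi-rid} an ``immediate corollary'' of Theorem \ref{thm-quasi}: the non-constant holomorphic disk $t\mapsto\lambda_t$ gives openness via the open mapping theorem and connectedness via the path $\Phi([0,1])$, and the singleton claim on $\partial\MM_d$ follows because an open conjugacy class would force the number of attracting cycles (a topological conjugacy invariant) to be locally constant, i.e.\ $J$-stability, contradicting Theorem \ref{thm-J-stable}. No gaps; the paper itself supplies no further detail.
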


A holomorphic family of rational maps $f_\lambda:\Lambda\times\EC\rightarrow\EC$ is \emph{quasiconformally constant} if $f_{\lambda_1}$ and $f_{\lambda_2}$ are quasiconformally conjugate for any $\lambda_1$ and $\lambda_2$ in the same component of $\Lambda$.
We call the family $f_\lambda$ has \emph{constant critical orbit relations} if any coincidence $f_\lambda^{\circ n}(c_1)=f_\lambda^{\circ m}(c_2)$  between the forward orbits of two critical points $c_1$ and $c_2$ of $f_\lambda$ persists under perturbation of $\lambda$.
The following theorem was proved in \cite[Theorem 2.7]{McS}.

\begin{thm}[{\cite{McS}}]\label{thm-const-crit}
A holomorphic family $f_\lambda$ of rational maps with constant critical orbit relations is quasiconformally constant.
\end{thm}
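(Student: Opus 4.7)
The plan is to prove the statement locally on $\Lambda$ and then globalize by connectedness: since quasiconformal conjugacy is an equivalence relation, it suffices to show that every $\lambda_{0}\in\Lambda$ has a connected neighborhood $U$ on which $f_{\lambda}$ is quasiconformally conjugate to $f_{\lambda_{0}}$, for then the set of parameters in a given component of $\Lambda$ quasiconformally conjugate to $f_{\lambda_{0}}$ is both open and closed. So I fix $\lambda_{0}$ and work in a small neighborhood.

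The first substantive step is to deduce $J$-stability at $\lambda_{0}$ from the constancy of critical orbit relations. Because no coincidence $f_{\lambda}^{\circ n}(c_{i})=f_{\lambda}^{\circ m}(c_{j})$ can appear or disappear under perturbation, no critical point of $f_{\lambda}$ can cross $J(f_{\lambda})$ as $\lambda$ varies: any critical orbit that was captured by an attracting, superattracting or parabolic cycle stays captured, and any critical orbit disjoint from the attracting basins stays disjoint. Each critical orbit $\{f_{\lambda}^{\circ k}(c_{j}(\lambda))\}_{k\geq 0}$ is therefore locally a normal family in $\lambda$, and by the criterion used in Theorem \ref{thm-J-stable} (i.e.\ \cite[Theorem 4.2]{Mc}) the map $f_{\lambda}$ is $J$-stable on $U$. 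The $\lambda$-lemma of Mañé–Sad–Sullivan then produces a holomorphic motion $\Phi:U\times J(f_{\lambda_{0}})\to\widehat{\mathbb{C}}$ conjugating $f_{\lambda_{0}}$ to $f_{\lambda}$ on the Julia set, with each slice $\Phi(\lambda,\cdot)$ quasiconformal of dilatation tending to $1$ as $\lambda\to\lambda_{0}$.

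The second substantive step, which I expect to be the main obstacle, is to extend $\Phi$ from $J(f_{\lambda_{0}})$ to a global quasiconformal conjugacy on $\widehat{\mathbb{C}}$. By Słodkowski's theorem the motion extends to a holomorphic motion of the whole sphere, but this extension is not yet a conjugacy on the Fatou set. I would treat each cycle of Fatou components separately: on attracting, superattracting and parabolic basins one promotes the motion to a conjugacy by transporting the motion on $J$ across a fundamental domain via Kœnigs, Böttcher or Fatou coordinates, using constancy of critical orbit relations to guarantee that the fundamental domain and its holomorphic coordinates move holomorphically with $\lambda$; on Siegel disks and Herman rings the linearizing coordinates furnish the conjugacy directly, because the multiplier of the indifferent cycle is a holomorphic invariant of the motion; on any queer component one uses the Mañé–Sad–Sullivan observation that the Beltrami coefficient of the extended motion is an invariant measurable Beltrami form.

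Assembling these local conjugacies yields a Beltrami differential on $\widehat{\mathbb{C}}$ which is $f_{\lambda_{0}}$-invariant and has $L^{\infty}$-norm $<1$ tending to $0$ as $\lambda\to\lambda_{0}$. The measurable Riemann mapping theorem integrates it to a $K_{\lambda}$-quasiconformal homeomorphism $\varphi_{\lambda}:\widehat{\mathbb{C}}\to\widehat{\mathbb{C}}$ conjugating $f_{\lambda_{0}}$ to $f_{\lambda}$, with $K_{\lambda}\to 1$ as $\lambda\to\lambda_{0}$, exactly as asserted. Finally, a standard path-lifting argument propagates this local conjugacy along any arc in a component of $\Lambda$, completing the proof that $f_{\lambda}$ is quasiconformally constant on each component.
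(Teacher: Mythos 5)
You should first be aware that the paper does not prove this statement at all: it is imported verbatim as Theorem 2.7 of McMullen--Sullivan \cite{McS}, so there is no in-paper argument to compare against and your text is necessarily a reconstruction of theirs. Your architecture (local statement plus an open--closed globalization, $J$-stability, a holomorphic motion of $J$ via the $\lambda$-lemma, extension over the Fatou set by linearizing/B\"ottcher/Fatou coordinates) is indeed the standard Ma\~{n}\'{e}--Sad--Sullivan / McMullen--Sullivan route. But there is a genuine gap in your first substantive step. From ``no coincidence $f_\lambda^{\circ n}(c_i)=f_\lambda^{\circ m}(c_j)$ appears or disappears'' you assert that no critical point can cross $J(f_\lambda)$ and that captured critical orbits stay captured --- but that \emph{is} the content of $J$-stability, not an argument for it. The actual implication runs through Montel's theorem: if some critical orbit function $\lambda\mapsto f_\lambda^{\circ k}(c(\lambda))$ fails to be normal at $\lambda_0$, it must meet one of three holomorphically moving repelling periodic points at some nearby $\lambda_1$, so $c$ becomes preperiodic there; the resulting relation $f_{\lambda_1}^{\circ(k+p)}(c)=f_{\lambda_1}^{\circ k}(c)$ is either new (contradicting constancy of critical orbit relations) or already held at $\lambda_0$, in which case the orbit is eventually periodic and the family $\{f_\lambda^{\circ k}(c(\lambda))\}_k$ is trivially normal, again a contradiction. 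Without this argument your step 2 is an assertion, and everything downstream rests on it.

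Two smaller defects. By Sullivan's classification there is no ``queer Fatou component'' on the dynamical sphere --- queer components live in parameter space; the case that actually requires comment is a Julia set of positive area, and there the $\lambda$-lemma already supplies a map quasiconformal on $J$, so nothing extra is needed for \emph{existence} of the conjugacy (only for its uniqueness). And your final assembly --- ``these local conjugacies yield an invariant Beltrami differential which one integrates'' --- is close to circular: an $f_{\lambda_0}$-invariant Beltrami form whose straightening conjugates $f_{\lambda_0}$ to $f_\lambda$ specifically (rather than to some other map) is precisely the Beltrami coefficient of a conjugacy one has already built, and the gluing of the per-component maps with the motion on $J$ into a single quasiconformal homeomorphism is exactly the point that needs care. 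The cleaner formulation, and the one \cite{McS} adopts, is to build a single dynamics-equivariant holomorphic motion of the sphere (critical grand orbits first, then their closure by the $\lambda$-lemma, then the complementary annuli and disks) and read off the $K_t$-quasiconformal conjugacy, with $K_t\to 1$, as a slice of that motion.
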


\begin{prop}\label{prop-H-0-unbound}
The Julia set $J_{d\lambda}$ of $T_{d\lambda}$ is a quasicircle if and only if $\lambda\in\MH_0$. Moreover, $\MH_0$ is unbounded and connected.
\end{prop}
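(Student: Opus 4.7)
The first equivalence is immediate: by Definition \ref{def-H-n}, $\MH_0=\{\lambda\in\C^*:0\in\A_{d\lambda}(1)\}$, which is condition $(5)$ of Lemma \ref{equiv-condi}, equivalent there to condition $(1)$ that $J_{d\lambda}$ is a quasicircle.

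For unboundedness, my plan is to show that every $\lambda$ of sufficiently large modulus belongs to $\MH_0$ by a normal-form rescaling at the superattracting fixed point $1$ of $U_{d\lambda}$. A direct expansion gives $U_{d\lambda}(z)-1=(d/\lambda^{d-1})(z-1)^d+O((z-1)^{d+1}/\lambda^d)$, so in the rescaled coordinate $u=d^{1/(d-1)}(z-1)/\lambda$ the map $U_{d\lambda}$ takes the form $u\mapsto u^d+O(u^{d+1})$ with coefficients uniform in $\lambda$. From \eqref{pre-1-lambda} one computes $\omega_k-1=\lambda/((1-\lambda)^{1/d}e^{2k\pi i/d}-1)$, whence $|u(\omega_k)|\asymp|\lambda|^{-1/d}$, while $|u(0)|=d^{1/(d-1)}/|\lambda|\asymp|\lambda|^{-1}$. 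Since $|\lambda|^{-1}\ll|\lambda|^{-1/d}$ for $d\ge 2$, the point $u(0)$ lies well inside the immediate basin of $0$ for the rescaled map when $|\lambda|$ is large, so $0\in\A_{d\lambda}(1)$ and hence $\lambda\in\MH_0$.

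For connectedness, my plan is to combine Corollary \ref{cor-quasi-rid} with a direct construction showing that all $T_{d\lambda}$ with $\lambda\in\MH_0$ lie in a single quasiconformal conjugacy class. Fix $\lambda_0,\lambda_1\in\MH_0$; by Lemma \ref{equiv-condi} each $T_{d\lambda_i}$ is hyperbolic with two simply connected Fatou components, and each restriction $T_{d\lambda_i}|_{\A_{d\lambda_i}(1)}\to\A_{d\lambda_i}(\infty)$ (and its symmetric counterpart) is a degree-$d$ proper branched cover of topological disks with a single critical point of multiplicity $d-1$, hence conformally modeled on $w\mapsto w^d:\D\to\D$. I would then construct $\varphi:\EC\to\EC$ with $\varphi\circ T_{d\lambda_0}=T_{d\lambda_1}\circ\varphi$ by (i) choosing a quasisymmetric conjugacy between the degree-$d$ expanding dynamics on the two quasicircles $J_{d\lambda_0}$ and $J_{d\lambda_1}$; (ii) extending across each Fatou component via the canonical uniformizations of the basins; and (iii) promoting to an equivariant conjugacy through an Ahlfors--Bers-type pull-back that respects the splitting $U_{d\lambda}=T_{d\lambda}\circ T_{d\lambda}$. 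Since having a quasicircle Julia set is preserved under quasiconformal conjugacy, $\MH_0$ coincides with this single qc conjugacy class in $\mathcal{R}_d$, so by Corollary \ref{cor-quasi-rid} it is open and connected. The main obstacle is step (iii): the pull-back must intertwine \emph{both} degree-$d$ restrictions of $T_{d\lambda}$ simultaneously, not merely the degree-$d^2$ map $U_{d\lambda}$, which forces one to match the combinatorial positions of the $\omega_k$'s on both sides of the surgery.
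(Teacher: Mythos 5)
Your reduction of the first assertion to Lemma \ref{equiv-condi}$(1)\Leftrightarrow(5)$ plus Definition \ref{def-H-n} is exactly the paper's argument. For the other two assertions you take genuinely different routes. For unboundedness the paper conjugates $T_{d\lambda}$ (not $U_{d\lambda}$) by $\varphi_\alpha(z)=\alpha^d(z-1)$ with $\alpha=\lambda^{-1/(d+1)}$, obtaining $f_\alpha(z)=z^{-d}+C_d^1\alpha z^{-(d-1)}+\cdots$, and concludes from the stability of the hyperbolic map $z\mapsto 1/z^d$ (whose Julia set is the unit circle) that $J_{d\lambda}$ is a quasicircle for large $\lambda$; you instead work locally at the superattracting fixed point $1$ of $U_{d\lambda}$. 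Your route can be made to work, but one claim needs repair: after the substitution $u=d^{1/(d-1)}(z-1)/\lambda$ the coefficients are \emph{not} uniform in $\lambda$ --- writing $v=ud^{-1/(d-1)}$ one finds $u'=u^{d}g(v)+\mathcal{O}(\lambda u^{2d})$ with $g(v)=1+\mathcal{O}(v)$, so the coefficient of $u^{2d}$ grows like $\lambda$. The normal form is a uniformly small perturbation of $u\mapsto u^d$ only on the disk $|u|\le c|\lambda|^{-1/d}$, which is precisely the scale of the critical points $\omega_k$ that you compute; on that disk one checks directly that $|u'|\le\tfrac{1}{2}|u|$ for large $|\lambda|$, so the disk is forward invariant and lies in the immediate basin of $1$, and since $|u(0)|\asymp|\lambda|^{-1}$ sits well inside it, $0\in\A_{d\lambda}(1)$. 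So your comparison of scales is the right one, but the justification must be run on the $|\lambda|^{-1/d}$-disk rather than via a $\lambda$-uniform power series; the paper's global rescaling avoids this bookkeeping entirely. For connectedness the paper observes that all parameters with quasicircle Julia set share the same critical orbit relations and invokes Theorem \ref{thm-const-crit} together with Corollary \ref{cor-quasi-rid}, which replaces your three-step surgery (whose step (iii), as you yourself flag, is the genuinely delicate equivariant gluing along the two quasicircles) with a citation of \cite{McS}. Your surgery, if completed, would essentially reprove that rigidity statement, so it buys independence from \cite{McS} at the cost of considerable extra work; the paper's route is the economical one, and your final deduction (a single qc class that is not a point, hence open and connected, and saturated because quasicircles are preserved by qc conjugacy) matches the paper's.
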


\begin{proof}
By the definition of $\MH_0$ and Lemma \ref{equiv-condi}, it follows that if $\lambda\in\MH_0$, then $J_{d\lambda}$ is a quasicircle. Conversely, if $J_{d\lambda}$ is a quasicircle, then $1-\lambda\in\A_{d\lambda}(\infty)$. This means that $T_{d\lambda}$ and $T_{d\lambda_0}$ have the same critical orbit relations, where $\lambda_0\in \MH_0$. By Theorem \ref{thm-const-crit}, $T_{d\lambda}$ and $T_{d\lambda_0}$ are quasiconformally conjugate to each other. By Corollary \ref{cor-quasi-rid}, it follows that $\lambda\in\MH_0$ and $\MH_0$ is connected.

To finish, we only need to show that $\MH_0$ is unbounded. Let $\alpha=\lambda^{-\frac{1}{d+1}}$ and $\varphi_\alpha(z)=\alpha^d(z-1)$ be a linear transformation. By a straightforward calculation, we have
\begin{equation*}
f_\alpha(z):= \varphi_\alpha\circ T_{d\lambda}\circ\varphi_\alpha^{-1}=\sum_{i=0}^{d-1}\frac{C_d^i\,\alpha^i}{z^{d-i}}
=\frac{1}{z^d}+\frac{C_d^1\alpha}{z^{d-1}}+\cdots+\frac{C_d^1\alpha^{d-1}}{z}.
\end{equation*}
If $\alpha\neq 0$ is small enough, then the Julia set of $f_\alpha$ is a quasicircle since the Julia set of $z\mapsto 1/z^d$ is the unit circle. This means that $J_{d\lambda}$ is a quasicircle if $\lambda$ is large enough.
\end{proof}

By definition, the parameter $\lambda\in\bigcup_{n\geq 0}\MH_n$ if and only if the critical orbit $1-\lambda\mapsto 0\mapsto (1-\lambda)^d\mapsto\cdots$ tends to the attracting periodic cycle $1\mapsto\infty\mapsto 1$. A point $\lambda$ is called a \emph{center} of a hyperbolic component $W\subset\MM_d$ if the critical point $1-\lambda$ is periodic. On the other hand, $\lambda$ is called a \emph{center} of a capture domain of $\bigcup_{n\geq 1}\MH_n$ if the critical point $1-\lambda$ is eventually mapped to $1$.

\begin{lema}\label{lema-no-center}
Every hyperbolic component in $\MM_d$ and capture domain in $\MH_n$ has a center, where $n\geq 1$. Meanwhile, $\MH_0$ has no center.
\end{lema}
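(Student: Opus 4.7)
The plan is to handle the three assertions with three different tools: a multiplier map for hyperbolic components, a Böttcher-coordinate ``capture map'' for $\MH_n$ with $n\ge 1$, and the quasiconformal rigidity of Proposition~\ref{prop-H-0-unbound} for $\MH_0$.

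\emph{Hyperbolic components of $\MM_d$.} Let $W$ be such a component. By Corollary~\ref{cor-hyper}, every $\lambda\in W$ admits an attracting cycle of some common period $p\ge 1$ that absorbs the free critical point $1-\lambda$. Tracking this cycle via the implicit function theorem produces a holomorphic multiplier $\mu\colon W\to\D$. Theorem~\ref{thm-J-stable} identifies $\partial W$ with the locus where $J$-stability fails, so along any sequence $\lambda_j\to\partial W$ the attracting cycle must become neutral or collide with another cycle and $|\mu(\lambda_j)|\to 1$. Hence $\mu$ is a proper holomorphic map, a branched cover of $\D$, and every point of $\mu^{-1}(0)$ is a center of $W$.

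\emph{Capture components of $\MH_n$, $n\ge 1$.} Fix such a component $W$ and $\lambda\in W$. Then $0\notin\A_{d\lambda}(1)$, and combined with the identities $U_{d\lambda}(\omega_k)=0$, $U_{d\lambda}(\xi_k)=\infty$, $T_{d\lambda}(1-\lambda)=0$ and with Lemma~\ref{lema-0-1-lambda}, no critical point of $U_{d\lambda}$ other than $1$ itself can lie in $\A_{d\lambda}(1)$. Riemann--Hurwitz then gives that $U_{d\lambda}\colon\A_{d\lambda}(1)\to\A_{d\lambda}(1)$ is proper of degree $d$, ramified only at $1$, so the B\"ottcher coordinate at $1$ extends to a conformal isomorphism $\phi_\lambda\colon\A_{d\lambda}(1)\to\D$ conjugating $U_{d\lambda}$ to $z\mapsto z^d$, depending holomorphically on $\lambda\in W$ after the standard normalization. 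I would then define
\[
\Phi(\lambda):=\phi_\lambda\bigl(T_{d\lambda}^{\circ n}(0)\bigr),\qquad\lambda\in W,
\]
and argue $\Phi\colon W\to\D$ is proper: as $\lambda\to\partial W$, the orbit point $T_{d\lambda}^{\circ n}(0)$ is expelled from $\A_{d\lambda}(1)$ onto the Julia set, so $|\Phi(\lambda)|\to 1$. Properness forces surjectivity, and any $\lambda\in\Phi^{-1}(0)$ satisfies $T_{d\lambda}^{\circ n}(0)=1$, which means the chain $1-\lambda\mapsto 0\mapsto T_{d\lambda}(0)\mapsto\cdots\mapsto T_{d\lambda}^{\circ n}(0)=1$ lands $1-\lambda$ on $1$ in $n+1$ steps---exactly a center of $W$.

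\emph{No center in $\MH_0$.} Suppose for contradiction that $\lambda_0\in\MH_0$ is a center, so $T_{d\lambda_0}^{\circ k}(0)=1$ for a minimal $k\ge 1$. The proof of Proposition~\ref{prop-H-0-unbound} shows that every $T_{d\lambda}$ with $\lambda\in\MH_0$ carries the same critical orbit relations as $T_{d\lambda_0}$ and is quasiconformally conjugate to $T_{d\lambda_0}$ via Theorem~\ref{thm-const-crit}. Normalizing the conjugacy to fix $0$, $1$, and $\infty$ (each of which is dynamically distinguished), the algebraic relation $T_{d\lambda}^{\circ k}(0)=1$ then persists throughout $\MH_0$. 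But $\lambda\mapsto T_{d\lambda}^{\circ k}(0)-1$ is a non-constant rational function with only finitely many zeros in $\C$, contradicting the fact that $\MH_0$ is a nonempty open subset of $\C$.

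\emph{Main obstacle.} The technical crux is the properness of $\Phi$ in the capture case. The boundary $\partial W$ may meet $\MM_d$, a different capture component $\MH_m$, $\partial\MM_d$, or $\partial\MH_0$, and in each case one must verify---via the holomorphic motion of $\A_{d\lambda}(1)$ on $J$-stable regions and the Carath\'eodory-type boundary behaviour of $\phi_\lambda$---that $T_{d\lambda}^{\circ n}(0)$ genuinely accumulates on $J_{d\lambda}$ in the limit, so that $|\Phi(\lambda)|\to 1$. Once this is established, the standard branched-cover argument closes the proof.
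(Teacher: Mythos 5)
Your first two parts follow the paper's proof essentially verbatim: for a hyperbolic component the paper uses the proper holomorphic multiplier map $\lambda\mapsto m(\lambda)$, and for a component $W$ of $\MH_n$ ($n\ge 1$) it defines exactly your map $m(\lambda)=\psi_\lambda(T_{d\lambda}^{\circ n}(0))$ via the B\"ottcher coordinate and proves the properness you flag as the crux by a normal-families argument (if $\lambda_k\to\partial W$ and $m(\lambda_k)\to w\in\D$, the univalent inverses $\psi_{\lambda_k}^{-1}$ converge to a map into $\A_{d\lambda}(1)$, forcing $T_{d\lambda}^{\circ n}(0)\in\A_{d\lambda}(1)$ for the boundary parameter, a contradiction). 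So those parts are fine and on the paper's route.

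The third part has a genuine circularity. Your propagation step --- ``the relation $T_{d\lambda}^{\circ k}(0)=1$ persists throughout $\MH_0$'' --- rests on every $\lambda\in\MH_0$ being quasiconformally conjugate to $T_{d\lambda_0}$, which the proof of Proposition~\ref{prop-H-0-unbound} obtains from Theorem~\ref{thm-const-crit} under the hypothesis of \emph{constant critical orbit relations}. That hypothesis is precisely what your contradiction assumption destroys: if $\lambda_0$ is a center, it satisfies a critical-orbit coincidence ($T_{d\lambda_0}^{\circ k+1}(1-\lambda_0)=1$) that a generic $\lambda\in\MH_0$ does not, and since any quasiconformal conjugacy must send the periodic critical point $1$ to $1$ and $1-\lambda_0$ to $1-\lambda$, a center can never be conjugate to a non-center. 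So under your hypothesis the conjugacy you invoke does not exist, and the relation does not propagate. The surrounding finiteness argument cannot rescue this on its own: the equation $T_{d\lambda}^{\circ k}(0)=1$ has only finitely many solutions in every capture component, yet each component of $\MH_n$ with $n\ge 1$ \emph{does} contain a center sitting as an isolated single-point conjugacy class inside the open component (Theorem~\ref{thm-qua-class}(2)(3)). Your argument draws no distinction between $\MH_0$ and these components, so it would ``prove'' too much. The paper instead argues directly from the dynamics for $\lambda\in\MH_0$: by Lemma~\ref{equiv-condi} the basin $\A_{d\lambda}(1)$ contains only the critical point $1$ of $T_{d\lambda}$ (while $1-\lambda\in\A_{d\lambda}(\infty)$), and since $T_{d\lambda}(1-\lambda)=0\neq 1$ the free critical orbit enters the superattracting basin at a point other than the center and never lands on the cycle $1\leftrightarrow\infty$. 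You need an input of this dynamical kind, specific to $\MH_0$, to close your third part.
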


It will be proved in next section that every hyperbolic component in $\MM_d$ and capture domain in $\MH_n$ has exactly one center, where $n\geq 1$ (Theorem \ref{thm-H_n-simp}).

\begin{proof}
Let $W$ be a hyperbolic component in $\MM_d$. For every $\lambda\in W$, let $m(\lambda)$ be the multiplier of the attracting periodic orbit of $T_{d\lambda}$ other than $1\leftrightarrow\infty$. It can be checked directly that the multiplier mapping $\lambda\mapsto m(\lambda)$ defined from $W$ to $\D$ is proper and holomorphic. This means that $W$ has at least one center.

Let $W$ be a component of $\MH_n$, where $n\geq 1$. Then for every $\lambda\in W$, $T_{d\lambda}^{\circ n}(0)\in\A_{d\lambda}(1)$ and $n$ is smallest. Let $\psi_\lambda:\A_{d\lambda}(1)\rightarrow\D$ be the unique B\"{o}ttcher map define on the immediate basin of $1$ such that $\psi_\lambda\circ U_{d\lambda}=(\psi_\lambda(z))^d$, $\psi_\lambda(1)=0$ and $\psi_\lambda'(1)=1$. By the definition of $\psi_\lambda$, it follows that $\psi_\lambda$ depends holomorphically on $\lambda \in W$. Define a map $m:W\rightarrow \D$ by $m(\lambda)=\psi_\lambda(T_{d\lambda}^{\circ n}(0))$. It is clearly that $m$ is holomorphic. We then prove $m$ is proper. Let $\lambda_k\in H$ be a sequence converging to $\lambda\in \partial W$ as $n\rightarrow \infty$. Suppose that there exists a subsequence of $\lambda_k$, denote also by $\lambda_k$, such that $m(\lambda_k)$ converges to an interior point $w\in\D$. Since the family of univalent mappings $\{\psi_{\lambda_k}^{-1}:\D\rightarrow\C\}$ is normal, we can suppose that $\psi_{\lambda_k}^{-1}\rightarrow \psi^{-1}$ locally uniformly on $\D$. So $\psi^{-1}(\D)\subset\A_{d\lambda}(1)$. This means that $\psi^{-1}(w)=\lim_{k\rightarrow\infty}\psi_{\lambda_k}^{-1}(m(\lambda_k))=\lim_{k\rightarrow\infty}T_{d\lambda_k}^{\circ n}(0)=T_{d\lambda}^{\circ n}(0)\in\A_{d\lambda}(1)$. This contradicts $T_{d\lambda}^{\circ n}(0)\in J_{d\lambda}$ since $\lambda\in\partial W$.

Finally, by the definition of $\MH_0$ and Lemma \ref{equiv-condi}, $\A_{d\lambda}(1)$ contains only one critical point $1$ (counted without multiplicity). Note that $\A_{d\lambda}(1)$ lies in a superattracting periodic Fatou component and $T_{d\lambda}(1-\lambda)=0\neq 1$, it follows that the orbit of $1-\lambda$ is disjoint with the orbit $1\leftrightarrow\infty$. The proof is complete.
\end{proof}

Now we give a complete characterization of the quasiconformal conjugacy classes in $\mathcal{R}_d$.

\begin{thm}\label{thm-qua-class}
Quasiconformal conjugacy classes in $\mathcal{R}_d$ can be listed as follows:

$(1)$ Hyperbolic components in the interior of $\MM_d$ with the center removed.

$(2)$ Capture components of $\MH_n$ with the center (if any) removed, where $n\geq 0$.

$(3)$ Centers of hyperbolic or capture domains.

$(4)$ Queen components in the interior of $\MM_d$.

$(5)$ Single points on the boundary of $\MM_d$.
\end{thm}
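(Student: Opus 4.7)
The plan is to combine Corollary \ref{cor-quasi-rid} with the McMullen--Sullivan principle (Theorem \ref{thm-const-crit}), applied to the decomposition of parameter space given in Proposition \ref{prop-decp}. Since the orbit $1\leftrightarrow\infty$ is always a superattracting $2$-cycle and, by Lemma \ref{equiv-condi}, the critical points $\xi_k$ and $\omega_k$ move together with $1$ and $1-\lambda$ into or out of the basins $\A_{d\lambda}(1),\A_{d\lambda}(\infty)$, the only genuinely free critical orbit is that of $1-\lambda$. Case (5) is then immediate: for $\lambda\in\partial\MM_d$ the map $T_{d\lambda}$ fails to be $J$-stable by Theorem \ref{thm-J-stable}, so Corollary \ref{cor-quasi-rid} forces its quasiconformal conjugacy class to be a single point.

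For the open regions I would verify in each case that the critical orbit relations among the points in \eqref{Crit-U-lambda} are locally constant, then invoke Theorem \ref{thm-const-crit}. In a hyperbolic component $W$ in the interior of $\MM_d$, the free critical point is attracted to a secondary attracting cycle whose period is locally constant, and the only orbit coincidence that can appear is that $1-\lambda$ itself lies on this cycle, which by definition happens precisely at the center of $W$; hence $T_{d\lambda}$ is quasiconformally constant on $W$ with the center removed, yielding (1). The same argument handles a capture component $W\subset\MH_n$ with $n\geq 1$: on $W$ one has $T_{d\lambda}^{\circ n}(0)\in\A_{d\lambda}(1)$, and the unique new coincidence $T_{d\lambda}^{\circ n}(0)=1$ appears exactly at the center, giving (2) for $n\geq 1$. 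For $\MH_0$, Lemma \ref{lema-no-center} ensures there is no center, the relation $1-\lambda\in\A_{d\lambda}(\infty)$ persists, and no further coincidence is possible, so $\MH_0$ is a single conjugacy class and completes (2). A queer component carries no center by definition and no orbit coincidence at all, so Theorem \ref{thm-const-crit} directly gives that the whole component is a single conjugacy class, yielding (4).

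For (3), I would argue that each center is its own conjugacy class. At the center of a hyperbolic component $1-\lambda$ satisfies a periodicity relation $T_{d\lambda}^{\circ N}(1-\lambda)=1-\lambda$, and at the center of a capture component in $\MH_n$ it satisfies $T_{d\lambda}^{\circ(n+1)}(1-\lambda)=1$. Either relation is preserved by any topological conjugacy but fails at every nearby parameter, so the center is not quasiconformally conjugate to any other element of its component. Combined with the dichotomy of Corollary \ref{cor-quasi-rid}, this forces the center's class to be a singleton. Together with Proposition \ref{prop-decp} the cases (1)--(5) exhaust $\mathcal{R}_d$.

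The main technical point that requires care is checking that in each open case the \emph{complete} list of critical orbit relations really is locally constant, ruling out hidden coincidences such as $T_{d\lambda}^{\circ k}(1-\lambda)=\omega_j$ or $\xi_j$. This is handled using Lemma \ref{equiv-condi}: whenever the free orbit is captured by $\A_{d\lambda}(1)\cup\A_{d\lambda}(\infty)$, all $\xi_j,\omega_j$ are already in those basins and the B\"{o}ttcher coordinates at $1$ and $\infty$ conjugate $T_{d\lambda}$ to a fixed power map there, so every coincidence inside the basins reduces to the orbit coincidence of $1-\lambda$ with $1$ that I have already enumerated. Once this is checked, Theorem \ref{thm-const-crit} applies without further work and the classification follows.
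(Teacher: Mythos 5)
Your proposal is correct and follows essentially the same route as the paper: Corollary \ref{cor-quasi-rid} disposes of the boundary points and shows the classes are singletons or open, Theorem \ref{thm-const-crit} handles the hyperbolic and capture components once the critical orbit relations are seen to be constant off the centers, and the queer components are treated as in Zakeri's analysis. The paper's own proof is just a terser version of this, deferring the bookkeeping of orbit coincidences (which you carry out explicitly, correctly reducing any relation involving $\xi_j$ or $\omega_j$ to the orbit of $1-\lambda$ meeting $1$ or being periodic) to the cited references.
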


\begin{proof}
By Corollary \ref{cor-quasi-rid}, the five cases stated in the theorem are disjoint to each other and (4)(5) are indeed quasiconformal conjugacy classes. (1)(2) are quasiconformal conjugacy classes by Theorem \ref{thm-const-crit}. As every queer component is a conjugacy class, one can get a proof in \cite[Theorem 3.4]{Za} by a word for word analysis.
\end{proof}

\section{Simply connectivity of the capture domains}\label{sec-simp-con}

In this section, we prove that the non-escaping locus $\MM_d$ is connected. This amounts to showing that $\MH_0$ is homeomorphic to the punctured disk $\D^*:=\D\setminus\{0\}$ and each of the component of $\MH_n$ is homeomorphic to the unit disk for $n\geq 1$.

One way to do this is to follow the standard way of Douady-Hubbard's parameterization of the hyperbolic components of the quadratic Mandelbrot set \cite{Do}. This method was developed by Roesch to study the parameter space of the cubic Newton maps \cite{Ro1,Ro2} and Qiu, Roesch, Wang and Yin to study the parameter space of the McMullen maps \cite{QRWY}. Moreover, this parameterized method was generated and then used in the proof of $\MM_2$ is connected \cite[Theorem 1.1]{WQYQG}.

However, to prove $\MH_0$ is homeomorphic to the punctured disk $\D^*$ and each of the component of $\MH_n$ is homeomorphic to the unit disk for $n\geq 1$, it would be much easier to use the methods of Teichm\"{u}ller theory of the rational maps which was developed in \cite{McS} (in which, a different proof of the connectivity of the Mandelbrot set was given).

We first recall some definitions in \cite{McS}.
By definition, the \emph{Teichm\"{u}ller space} $\Teich(T_{d\lambda})$ of $T_{d\lambda}$ consists of all pairs $(T_{d\lambda'},[\varphi])$, where $\varphi:\EC\rightarrow\EC$ is a quasiconformal mapping which conjugates $T_{d\lambda'}$ to $T_{d\lambda}$. Here $[\varphi]$ means the isotopy class of $\varphi$. The \emph{modular group} $\Mod(T_{d\lambda})$ is the group of isotopy classes of quasiconformal homeomorphism commuting with $T_{d\lambda}$. The modular group $\Mod(T_{d\lambda})$ acts on the Teichm\"{u}ller space $\Teich(T_{d\lambda})$ properly discontinuously by $[\psi](T_{d\lambda'},[\varphi])=(T_{d\lambda'},[\psi\circ\varphi])$. The \emph{moduli space} of $T_{d\lambda}$ is defined as the quotient $\Teich(T_{d\lambda})/\Mod(T_{d\lambda})$, which is isomorphic to the quasiconformal conjugacy class of $T_{d\lambda}$.

Moreover, one can define the Teichm\"{u}ller space $\Teich(U,T_{d\lambda})$ on an open set $U$ which is invariant under $T_{d\lambda}$. The set $\Teich(U,T_{d\lambda})$ consists of all the triples $(V,T_{d\lambda'},[\varphi])$, where $V$ is open and invariant under $T_{d\lambda'}$, and the quasiconformal mapping $\varphi:V\rightarrow U$ conjugates $T_{d\lambda'}$ to $T_{d\lambda}$. Here $[\varphi]$ denotes the isotopy class of $\varphi$ relative ideal boundary of $V$.

\begin{thm}\label{thm-H_n-simp}
Each component of $\MH_n$ is homeomorphic to $\D$ and contains exactly one center, where $n\geq 1$. Moreover, $\MH_0$ is homeomorphic to the punctured disk $\D^*$.
\end{thm}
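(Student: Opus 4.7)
My plan is to construct, for each component $W$ of $\MH_n$ with $n \geq 1$, an explicit biholomorphism $W \to \D$, and separately a biholomorphism $\MH_0 \to \D^*$. The candidate for $n \geq 1$ is the ``captured critical value'' map $m : W \to \D$ introduced in Lemma \ref{lema-no-center}, namely $m(\lambda) = \psi_\lambda(T_{d\lambda}^{\circ n}(0))$, where $\psi_\lambda$ is the B\"{o}ttcher coordinate of $U_{d\lambda}$ at the superattracting fixed point $1$, chosen univalent on the component of its domain of definition that contains $T_{d\lambda}^{\circ n}(0)$. That lemma already yields that $m$ is holomorphic and proper, so the task reduces to showing $\deg(m) = 1$, together with an analogous construction on $\MH_0$.

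The degree count will be based on Theorem \ref{thm-const-crit} combined with a quasiconformal surgery argument. Given two parameters $\lambda_1, \lambda_2 \in W$ with $m(\lambda_1) = m(\lambda_2) = w$, the captured critical orbits sit at identical positions in the B\"{o}ttcher model, so the two dynamics share all critical orbit relations; Theorem \ref{thm-const-crit} then produces a quasiconformal conjugacy between $T_{d\lambda_1}$ and $T_{d\lambda_2}$. Conversely, for each $w \in \D$ I would build a preimage as follows: fix a basepoint $\lambda_0 \in W$, construct a Beltrami coefficient $\mu$ on $\A_{d\lambda_0}(1)$ whose integrating map takes $T_{d\lambda_0}^{\circ n}(0)$ to $\psi_{\lambda_0}^{-1}(w)$ while fixing $1$ and the boundary of the basin, spread $\mu$ along the grand orbit by pullback under $U_{d\lambda_0}$, and extend by zero on the Julia set. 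Integrating $\mu$ via the measurable Riemann mapping theorem and normalizing back into $\mathcal{R}_d$ should give a unique $\lambda_w \in W$ with $m(\lambda_w) = w$. Combined with properness, this forces $m$ to be a biholomorphism, and $m^{-1}(0)$ becomes the unique center of $W$.

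For $\MH_0$, Lemma \ref{lema-no-center} rules out centers, so the target should be $\D^*$. The parallel construction uses the B\"{o}ttcher coordinate $\varphi_\lambda$ at $\infty$: define $M : \MH_0 \to \D^*$ by $M(\lambda) = \varphi_\lambda(1 - \lambda)$, where $\varphi_\lambda$ is the univalent extension of B\"{o}ttcher's coordinate on the maximal topological disk around $\infty$ inside $\A_{d\lambda}(\infty)$. By Lemma \ref{equiv-condi}(4), $1 - \lambda \in \A_{d\lambda}(\infty)$ throughout $\MH_0$, and $M(\lambda) \neq 0$ since $1 - \lambda \neq \infty$ for $\lambda \in \C^*$. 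Properness follows from the same normal-families argument as in Lemma \ref{lema-no-center}, and injectivity follows from the same surgery argument as above. The omitted value $0 \in \D$ corresponds to $\lambda \to \infty$, matching the unboundedness of $\MH_0$ asserted in Proposition \ref{prop-H-0-unbound}.

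The main obstacle is the rigorous execution of the surgery step. After integrating the perturbed Beltrami coefficient, the resulting rational map is \emph{a priori} only well-defined up to M\"{o}bius conjugacy, and I must verify that it admits a unique normalization of the form $T_{d\lambda}$ for some $\lambda \in \C^*$. This requires tracking the images of $1$ and $\infty$ (pinned by the rigid superattracting cycle $1 \leftrightarrow \infty$) and verifying that the involutive splitting $U_{d\lambda} = T_{d\lambda} \circ T_{d\lambda}$ is preserved under the deformation, so that the new map actually lies in $\mathcal{R}_d$. Once these verifications are in hand, the surgery yields a holomorphic section of $m$ (respectively $M$), closing the argument and establishing Theorem \ref{thm-H_n-simp}.
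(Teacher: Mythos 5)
You have chosen the Douady--Hubbard style parameterization that the paper explicitly sets aside: the paper instead identifies a component $W$ of $\MH_n$, with its centers removed, with $\Teich(U,T_{d\lambda})/\Mod(T_{d\lambda})\simeq\mathbb{H}/\Z\simeq\D^*$ via McMullen--Sullivan theory, and reads off the topology and the uniqueness of the center simultaneously. Your route can in principle be carried out (it is essentially what \cite{WQYQG} does for $d=2$), but as written it has two genuine gaps.

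First, the injectivity of $m$ is not established. The step ``$m(\lambda_1)=m(\lambda_2)$ implies the two maps share all critical orbit relations, so Theorem \ref{thm-const-crit} gives a quasiconformal conjugacy'' is vacuous: by Theorem \ref{thm-qua-class} the whole component $W$ minus its centers is already a single quasiconformal conjugacy class, so this conclusion holds for \emph{every} pair of parameters in $W$ and says nothing about the fibers of $m$. To get $\deg m=1$ you must either upgrade the conjugacy to a conformal (hence M\"obius) one when the B\"ottcher positions agree --- a genuine rigidity argument using that hyperbolic maps carry no invariant line fields together with a pullback of the conformal identification of the basins --- or show that the image of your surgery section $w\mapsto\lambda_w$ is \emph{all} of $W$ (open since the section is holomorphic and nonconstant, closed by continuity of $m$ and of the section, hence equal to $W$ by connectedness). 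The phrase ``combined with properness, this forces $m$ to be a biholomorphism'' skips exactly this: surjectivity plus properness only yields $\deg m\geq 1$, and the ``uniqueness'' of the surgery output does not rule out preimages of $w$ that the surgery based at $\lambda_0$ never produces.

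Second, for $\MH_0$ the map $M(\lambda)=\varphi_\lambda(1-\lambda)$ is not well defined as stated. Unlike $\A_{d\lambda}(1)$ for $\lambda\in\MH_n$ with $n\geq1$, which contains the single critical point $1$ of $U_{d\lambda}$ so that $\psi_\lambda$ is a conformal bijection onto $\D$, the basin $\A_{d\lambda}(\infty)$ for $\lambda\in\MH_0$ contains the critical points $\xi_1,\dots,\xi_{d-1}$ (critical preimages of $\infty$) as well as $1-\lambda$. Consequently the B\"ottcher coordinate at $\infty$ extends univalently only to a proper subdomain bounded by a critical equipotential, and $1-\lambda$ lies on (not inside) the boundary of that maximal domain, where $\varphi_\lambda$ is a branch point of the extension rather than a single-valued function. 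The standard repair is to evaluate at the critical value, e.g.\ $M(\lambda)=\varphi_\lambda\bigl(U_{d\lambda}(1-\lambda)\bigr)$, which does lie in the univalent domain; one must then still prove properness onto $\D^*$ rather than $\D$ and run the degree count as above. The normalization issue you flag at the end is real but manageable: the Beltrami form must be made invariant under $T_{d\lambda_0}$ (not merely $U_{d\lambda_0}$) in the spreading step, and a degree-$d$ map with two fully degenerate critical points forming the cycle $1\mapsto\infty\mapsto1$ and second critical value $v\neq 1,\infty$ is carried into the family $\mathcal{R}_d$ by the unique M\"obius transformation fixing $1$ and $\infty$ and sending $v$ to $0$.
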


\begin{proof}
Let $W$ be a component of $\MH_n$ with all centers removed. Then the forward orbit of $1-\lambda$ under $T_{d\lambda}$ is infinite for $\lambda\in W$. By Theorem \ref{thm-qua-class}, $W$ denotes a single quasiconformal conjugacy class.

For any basepoint $\lambda\in W$, it follows that the critical point $1-\lambda$ belongs to the attracting basin of the cycle $1\mapsto\infty\mapsto 1$. In particular, $T_{d\lambda}^{\circ n}(0)\in\A_{d\lambda}(1)$ and $T_{d\lambda}^{\circ n}(0) \neq 1$. Define the Green function on $\A_{d\lambda}(1)$ by
\begin{equation*}
G_{d\lambda}(z)=-\lim_{k\rightarrow\infty}d^{-k}\log|U_{d\lambda}^{\circ k}(z)-1|, \text{~where~} z\in \A_{d\lambda}(1).
\end{equation*}
Note that $G_{d\lambda}$ can be extended to the Fatou set of $T_{d\lambda}$ by pulling back.

Let $\gamma$ be the equipotential of $G_{d\lambda}$ passing through $1-\lambda$. Then $\gamma$ is homeomorphic to the figure 8. Define
\begin{equation*}
\widehat{J}_{d\lambda}:=J_{d\lambda}\cup\bigcup_{n\in\Z}T_{d\lambda}^{\circ n}(\gamma\cup\{0\}).
\end{equation*}
Then $\widehat{J}_{d\lambda}$ is the closure of the grand orbits of all periodic points and critical points of $T_{d\lambda}$. The complement $U:=\EC\setminus\widehat{J}_{d\lambda}$ consists of countably many annuli with finite modulus which lie in a same grand orbit. By \cite[Theorem 6.2]{McS}, we have
\begin{equation*}
\Teich(T_{d\lambda})\simeq \Teich(U,T_{d\lambda})\times M_1(J_{d\lambda},T_{d\lambda}),
\end{equation*}
where $M_1(J_{d\lambda},T_{d\lambda})$ denotes the unit ball in the space of all $T_{d\lambda}$-invariant Beltrami differentials supported on $J_{d\lambda}$. Note that every hyperbolic rational map carries no invariant line fields on the Julia set, it follows that $M_1(J_{d\lambda},T_{d\lambda})$ is trivial since $T_{d\lambda}$ is hyperbolic when $\lambda\in W\subset\MH_n$.

Since $W$ denotes a single quasiconformal conjugacy class, we have
\begin{equation*}
W\simeq \Teich(T_{d\lambda})/\Mod(T_{d\lambda})\simeq \Teich(U,T_{d\lambda})/\Mod(T_{d\lambda})\simeq \mathbb{H}/\Mod(T_{d\lambda})
\end{equation*}
by \cite[Theorem 6.1]{McS}. Note that every quasiconformal self-conjugacy $\psi$ of $T_{d\lambda}$ fixes the grand orbits of the critical points $1$ and $1-\lambda$ and hence fixes the boundaries of each annulus of $U$. Moreover, $\psi$ is the identity on $J_{d\lambda}$. Therefore, $[\psi]\in\Mod(T_{d\lambda})$ is identity on $\widehat{J}_{d\lambda}$ and it is possibly a power of a Dehn twist in the annuli of $U$. This means that $\Mod(T_{d\lambda})$ is a subgroup of $\Z$.

By Lemma \ref{lema-no-center}, each $W$ cannot be simply connected is a component of $\MH_n$ for $n\geq 1$. On the other hand, $W$ is not simply connected if $W=\MH_0$ by Proposition \ref{prop-H-0-unbound}. So $\Mod(T_{d\lambda})=\Z$. This means that $W$ is homeomorphic to a punctured disk. This means that each $W$ contains exactly only one center if $W\neq\MH_0$. The proof is complete.
\end{proof}

\vskip0.2cm
\noindent\emph{Proof of Theorem \ref{M-connected}}. This is a direct corollary of Proposition \ref{prop-decp} and Theorem \ref{thm-H_n-simp}.
\hfill $\square$

\section{Proof of the asymptotic formula}\label{pf-asym-fml}

By Proposition \ref{prop-H-0-unbound}, if the parameter $\lambda$ lies in the unbounded capture domain $\MH_0$, then the Julia set $J_{d\lambda}$ is a quasicircle. In this case, $J_{d\lambda}$ moves holomorphically in $\MH_0$ and its Hausdorff dimension depends real analytically on $\lambda$ by a classic result of Ruelle. The following Theorem \ref{Ruelle} is a weak version of \cite[Corollary 6]{Ru}.

\begin{thm}\label{Ruelle}
Let $f_\lambda:\Lambda\times\EC\rightarrow\EC$ be a holomorphic family of hyperbolic rational maps parameterized by $\Lambda$, where $\Lambda$ is a complex manifold. Then the Hausdorff dimension of the Julia set of $f_\lambda$ depends real analytically on $\lambda\in\Lambda$.
\end{thm}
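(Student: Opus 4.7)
The plan is to reduce real-analyticity of $\lambda \mapsto \dim_H(J_{f_\lambda})$ to real-analyticity of the topological pressure via Bowen's formula. The Bowen--Manning--McCluskey theorem, valid for any hyperbolic rational map, states that $s(\lambda) := \dim_H(J_{f_\lambda})$ is the unique solution of $P(f_\lambda, -s\log|f_\lambda'|) = 0$, where $P$ denotes the topological pressure of the dynamical system $f_\lambda|_{J_{f_\lambda}}$ with respect to the geometric potential $-s\log|f_\lambda'|$. So the strategy has three steps: (i) produce a smooth parameterization of the Julia sets and the associated potentials; (ii) show that the pressure function $\Psi(\lambda,s) := P(f_\lambda, -s\log|f_\lambda'|)$ is jointly real-analytic on $\Lambda \times (0,2)$; (iii) apply the implicit function theorem.

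For step (i), the hypothesis that $f_\lambda$ is a holomorphic family of \emph{hyperbolic} rational maps means no bifurcation can occur. The $\lambda$-lemma of Ma\~{n}\'{e}--Sad--Sullivan furnishes, locally in $\lambda$, a holomorphic motion $h_\lambda : J_{f_{\lambda_0}} \to J_{f_\lambda}$ which conjugates $f_{\lambda_0}$ to $f_\lambda$ on their Julia sets. Because $J_{f_\lambda}$ stays uniformly away from $\Crit(f_\lambda)$ by hyperbolicity, pulling back the potentials via $h_\lambda$ yields a real-analytic family of H\"{o}lder potentials on the fixed reference expanding repeller $J_{f_{\lambda_0}}$, with H\"{o}lder exponent uniform on compact subsets of $\Lambda$.

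For step (ii), the pressure of a H\"{o}lder potential on a conformal expanding repeller equals the logarithm of the leading eigenvalue of the Ruelle transfer operator $\mathcal{L}_{\lambda,s} \varphi (z) = \sum_{w \in f_\lambda^{-1}(z)} |f_\lambda'(w)|^{-s} \varphi(w)$ acting on a suitable Banach space of H\"{o}lder functions. Hyperbolicity guarantees that the inverse branches of $f_\lambda$ are well-defined holomorphic maps that depend analytically on $\lambda$, so $(\lambda, s) \mapsto \mathcal{L}_{\lambda,s}$ is a real-analytic family of bounded operators. The leading eigenvalue is simple and isolated from the rest of the spectrum (Ruelle--Perron--Frobenius theorem provides the spectral gap), so Kato's perturbation theory for analytic families yields real-analytic dependence of this eigenvalue, and hence of $\Psi$, on $(\lambda,s)$.

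For step (iii), a standard computation gives $\partial_s \Psi(\lambda,s) = -\int \log|f_\lambda'|\, d\mu_{\lambda,s}$, where $\mu_{\lambda,s}$ is the unique equilibrium state; this is strictly negative because $\log|f_\lambda'| > 0$ on the Julia set of a hyperbolic map. The implicit function theorem applied to $\Psi(\lambda, s(\lambda)) = 0$ then delivers a real-analytic $s(\lambda)$, which by Bowen's formula equals $\dim_H(J_{f_\lambda})$. The main obstacle is the operator-analyticity step: one must fix a Banach space of H\"{o}lder (or smooth) observables on $J_{f_{\lambda_0}}$ on which $\mathcal{L}_{\lambda,s}$ acts analytically in $(\lambda,s)$ and admits a spectral gap uniform on compact parameter sets. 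This is the technical heart of the argument, but is standard in thermodynamic formalism once uniform expansion on the Julia set is in place.
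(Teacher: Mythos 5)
The paper does not actually prove this statement: it is quoted as a weak form of \cite[Corollary 6]{Ru}, so there is no internal argument to compare against. Your sketch is, in essence, a reconstruction of Ruelle's own proof via thermodynamic formalism, and it is correct in outline: Bowen's formula $P(f_\lambda,-s\log|f_\lambda'|)=0$ characterizes $\dim_H(J_{f_\lambda})$ for a hyperbolic rational map, the pressure is the logarithm of the leading eigenvalue of the transfer operator, Kato perturbation theory gives analytic dependence of that isolated simple eigenvalue, and the implicit function theorem (with $\partial_s\Psi<0$ from uniform expansion) finishes. This is exactly the mechanism behind the cited result, so you are not taking a different route so much as supplying the proof the paper outsources.

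Two points deserve more care than your sketch gives them, since they are where the actual work lives. First, the conjugating holomorphic motion $h_\lambda$ on the Julia set does not come from the $\lambda$-lemma alone; you need $J$-stability of the hyperbolic family (Ma\~{n}\'{e}--Sad--Sullivan applied to the closure of the repelling periodic points) to get an equivariant motion, and $h_\lambda$ is in general only H\"{o}lder in $z$ with exponent degrading as $\lambda$ leaves the base point --- so the H\"{o}lder class on which $\mathcal{L}_{\lambda,s}$ acts must be chosen locally in $\lambda$, which is harmless because real-analyticity is a local property. Second, real-analyticity of $\lambda\mapsto\mathcal{L}_{\lambda,s}$ as a map into bounded operators on $C^\alpha(J_{f_{\lambda_0}})$ requires checking that the pulled-back weight $z\mapsto|f_\lambda'(h_\lambda(z))|^{-s}$ is an analytic Banach-space-valued function of $(\lambda,s)$, not merely pointwise analytic; this follows from holomorphy of $(\lambda,z)\mapsto f_\lambda'$ away from $\Crit(f_\lambda)$ together with uniform bounds on compact parameter sets, but it is the step you flag as ``standard'' and is the one a referee would ask you to write out. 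With those two points made explicit the argument is complete, and it proves precisely what the paper imports from \cite{Ru}.
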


Let $\Omega$ be a closed subset of $\R^n$. A map $S:\Omega\rightarrow \Omega$ is called a \emph{contraction} on $\Omega$ if there exists a real number $c\in(0,1)$ such that $|S(x)-S(y)| \leq c|x-y|$ for all $x, y\in\Omega$. A finite family of contractions $\{S_1, S_2,\cdots, S_m\}$ defined on $\Omega\subset\R^n$, with $m\geq 2$, is called an \emph{iterated function system} or IFS in short.

To compute the Hausdorff dimension of $J_{d\lambda}$ with $\lambda\in\MH_0$, we need the following result (see \cite[Theorem 9.1, Propositions 9.6 and 9.7]{Fa}).

\begin{thm}[{\cite{Fa}}]\label{Falconer}
Let $\{S_1,\ldots,S_m\}$ be an IFS on a closed set $\Omega\subset\mathbb{R}^n$ such that $|S_i(x)-S_i(y)|\leq c_i|x-y|$ with $0<c_i<1$. Then:

$(1)$ There exists a unique non-empty compact set $J$ such that $J=\bigcup_{\,i=1}^{\,m} S_i(J)$.

$(2)$ The Hausdorff dimension $\dim_H(J)$ of $J$ satisfies $\dim_H(J)\leq s$, where $\sum_{i=1}^mc_i^s=1$.

$(3)$ If we require further $|S_i(x)-S_i(y)|\geq b_i|x-y|$ for $0<b_i<1$, then $\dim_H(J)\geq s'$, where $\sum_{i=1}^m b_i^{s'}=1$.
\end{thm}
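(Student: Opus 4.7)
The plan is to prove the three parts in sequence, following the classical Hutchinson–Falconer strategy: part (1) as a fixed-point problem, part (2) as a natural-cover computation, and part (3) as a mass distribution argument.

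For (1), I would work in the space $\mathcal{K}(\Omega)$ of non-empty compact subsets of $\Omega$ endowed with the Hausdorff metric $d_H$, which is a complete metric space. Define the Hutchinson operator $F\colon\mathcal{K}(\Omega)\to\mathcal{K}(\Omega)$ by $F(K)=\bigcup_{i=1}^m S_i(K)$. The Lipschitz estimates on each $S_i$ give
\[
d_H(F(K),F(K'))\leq (\max_i c_i)\, d_H(K,K'),
\]
and since $\max_i c_i<1$, $F$ is a strict contraction. The Banach fixed-point theorem then supplies a unique compact set $J$ with $F(J)=J$.

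For (2), I would iterate the self-similarity to obtain $J=\bigcup_{|\mathbf{i}|=k}S_{\mathbf{i}}(J)$, where $S_{\mathbf{i}}=S_{i_1}\circ\cdots\circ S_{i_k}$ and $c_{\mathbf{i}}=c_{i_1}\cdots c_{i_k}$. Since $\textup{diam}(S_{\mathbf{i}}(J))\leq c_{\mathbf{i}}\textup{diam}(J)$, the family $\{S_{\mathbf{i}}(J)\}_{|\mathbf{i}|=k}$ is a cover of $J$ satisfying
\[
\sum_{|\mathbf{i}|=k}\bigl(c_{\mathbf{i}}\,\textup{diam}(J)\bigr)^{s}=\bigl(\textup{diam}(J)\bigr)^{s}\Bigl(\sum_{i=1}^{m}c_{i}^{s}\Bigr)^{k}=\bigl(\textup{diam}(J)\bigr)^{s},
\]
using the defining relation $\sum_i c_i^s=1$. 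Letting $k\to\infty$, the diameters of the covering sets tend to $0$ while the $s$-sum stays bounded by $(\textup{diam}(J))^s$, so $\mathcal{H}^{s}(J)<\infty$ and hence $\dim_H J\leq s$.

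For (3), I would build a self-similar probability measure $\mu$ on $J$ by assigning weights $p_i=b_i^{s'}$ (which sum to $1$ by the hypothesis $\sum_i b_i^{s'}=1$) and taking the pushforward of the Bernoulli measure $\prod_{k\geq 1}(p_1,\ldots,p_m)$ on the symbol space $\{1,\ldots,m\}^{\mathbb{N}}$ via the coding map $(i_1,i_2,\ldots)\mapsto\bigcap_{k\geq 1}S_{i_1}\circ\cdots\circ S_{i_k}(\Omega)$. The lower Lipschitz bound $|S_i(x)-S_i(y)|\geq b_i|x-y|$ guarantees that a level-$k$ cylinder image contains a ball of radius comparable to $b_{\mathbf{i}}$. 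Under a separation hypothesis (implicit in the quoted formulation, typically the open set condition), only boundedly many cylinders of a given scale meet any ball $B(x,r)$, which then yields the Frostman-type bound $\mu(B(x,r))\leq C r^{s'}$. The mass distribution principle finally gives $\dim_H J\geq s'$. The main obstacle is precisely this pointwise estimate on $\mu(B(x,r))$: without separation, overlapping cylinders can pile up uncontrollably and the bound fails, so any rigorous write-up has to either invoke an explicit separation condition or verify one from the geometry of the particular IFS to which the theorem is applied.
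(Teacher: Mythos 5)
The paper offers no proof of this theorem: it is quoted, with attribution, from Falconer's book (\cite[Theorem 9.1, Propositions 9.6 and 9.7]{Fa}), so there is no in-paper argument to compare against. Your treatment of (1) and (2) is the standard Hutchinson--Falconer proof and is correct as written: the Hausdorff-metric contraction gives existence and uniqueness of the attractor, and the level-$k$ natural covers give $\mathcal{H}^{s}(J)\leq(\textup{diam}\,J)^{s}<\infty$, hence $\dim_H J\leq s$.

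For (3) you have correctly identified a genuine defect, but it is in the statement rather than in your proof strategy: as quoted, part (3) is false without a separation hypothesis (take all $S_i$ equal to one contraction; then $J$ is a single point while $s'>0$). Falconer's Proposition 9.7 assumes in addition that the union $J=\bigcup_{i=1}^{m}S_i(J)$ is disjoint. With that hypothesis your mass-distribution plan goes through in the standard way: set $d=\min_{i\neq j}\textup{dist}(S_i(J),S_j(J))>0$, show inductively that two level-$k$ cylinders $S_{\mathbf{i}}(J)$ and $S_{\mathbf{j}}(J)$ whose words first differ at position $l$ are separated by at least $b_{i_1}\cdots b_{i_{l-1}}d$, deduce that a ball $B(x,r)$ meets boundedly many cylinders of diameter comparable to $r$, and apply the mass distribution principle to the Bernoulli measure with weights $p_i=b_i^{s'}$. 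In the paper's only use of this theorem (Lemma \ref{lema-fix}) the images $S_i(U)$ are pairwise essentially disjoint half-open quadrilaterals, so the omission is harmless there; but a self-contained write-up of (3) must state the disjointness (or an open set condition) explicitly, exactly as you anticipated.
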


The non-empty compact set $J$ appeared in Theorem \ref{Falconer}(1) is called the \emph{attractor} of the IFS $\{S_1,\ldots,S_m\}$.

Let $f$ be a rational map with degree at least two. We use $\text{Fix}(f)$ to denote the set of all the fixed points in the Julia set of $f$.

\begin{lema}\label{lema-fix}
Let $f$ be a hyperbolic rational map whose Julia set $J$ is a quasicircle. Then the Hausdorff dimension $D:=\dim_H(J)$ of $J$ is determined by $A_n(D)=\mathcal{O}(1)$ as $n\rightarrow\infty$, where
\begin{equation}\label{O(1)=}
A_n(D)=\sum_{z\in\,\textup{Fix}(f^{\circ n})}|(f^{\circ n})'(z)|^{-D}.
\end{equation}
\end{lema}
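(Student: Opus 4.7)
The plan is to realize $J$ as the attractor of an iterated function system whose contractions are inverse branches of $f^{\circ n}$, to compare the contraction ratios with $|(f^{\circ n})'|^{-1}$ at periodic points via Koebe distortion, and then to convert the two-sided dimension bounds of Theorem \ref{Falconer} into the Bowen-type asymptotic $A_n(D) = \mathcal{O}(1)$.

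Since $f$ is hyperbolic and $J$ is a quasicircle, the Fatou set has exactly two completely invariant simply connected components, and $f|_J$ is topologically conjugate to $z \mapsto z^d$ on $S^1$, where $d = \deg f$. For each sufficiently large $n$, the map $f^{\circ n}$ admits $d^n$ univalent inverse branches $g_1^{(n)}, \ldots, g_{d^n}^{(n)}$ defined on a common closed annular neighborhood $\Omega \supset J$ that is mapped strictly into itself by every branch. The collection $\{g_i^{(n)}\}$ is an IFS with attractor $J$, and each branch has a unique fixed point $z_i \in J$ which belongs to $\textup{Fix}(f^{\circ n})$ and satisfies $g_i^{(n)\prime}(z_i) = 1/(f^{\circ n})'(z_i)$; this assignment realises an essentially bijective correspondence with $\textup{Fix}(f^{\circ n}) \cap J$ up to a bounded discrepancy coming from the choice of a combinatorial base arc.

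Because hyperbolicity yields a positive gap between $J$ and the postcritical set, each $g_i^{(n)}$ extends univalently to a Koebe-enlarged parent neighborhood of $\Omega$, and iterating inverse branches keeps those neighborhoods inside a fixed hyperbolic region while shrinking them geometrically. Koebe's distortion theorem therefore supplies an absolute constant $K \geq 1$, independent of $n$ and $i$, such that the forward and reverse Lipschitz constants $c_i^{(n)}$ and $b_i^{(n)}$ of $g_i^{(n)}$ on $\Omega$ satisfy
\begin{equation*}
K^{-1} |(f^{\circ n})'(z_i)|^{-1} \;\leq\; b_i^{(n)} \;\leq\; c_i^{(n)} \;\leq\; K\, |(f^{\circ n})'(z_i)|^{-1}.
\end{equation*}
Feeding these into Theorem \ref{Falconer} gives the upper bound $\dim_H(J) \leq s$ whenever $\sum_i (c_i^{(n)})^s \leq 1$, i.e.\ whenever $K^s A_n(s) \lesssim 1$, and the lower bound $\dim_H(J) \geq s'$ whenever $\sum_i (b_i^{(n)})^{s'} \geq 1$, i.e.\ whenever $K^{-s'} A_n(s') \gtrsim 1$.

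Since $s \mapsto A_n(s)$ is strictly decreasing and $D = \dim_H(J)$ is unique, applying the lower-bound implication at any exponent $s < D$ forces $A_n(s) \to \infty$ (otherwise $A_n(s)$ would stay bounded above, eventually violating $\dim_H(J) \geq s$-type lower bounds for all $s' < D$), while the upper-bound implication applied at any exponent $s > D$ symmetrically forces $A_n(s) \to 0$; the boundary case $s = D$ therefore yields precisely $A_n(D) = \mathcal{O}(1)$, with the bound holding both above and away from zero, so the stated asymptotic characterizes $D$ uniquely. The main obstacle is making the Koebe distortion uniform in $n$: one must organise $\Omega$ and its parent domain so that every iterated inverse branch enjoys a common Koebe margin, which follows from the uniform expansion of $f$ on $J$ coming from hyperbolicity, ensuring that successive distortion contributions compound into a convergent geometric series rather than accumulating as $n \to \infty$.
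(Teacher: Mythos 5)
Your strategy coincides with the paper's: realize $J$ via inverse branches of $f^{\circ n}$ as an IFS, use Koebe distortion to compare the Lipschitz constants with $|(f^{\circ n})'|^{-1}$ at the branch fixed points, and feed the result into Theorem \ref{Falconer}. But two steps do not hold up as written. First, the IFS cannot live on ``a common closed annular neighborhood $\Omega\supset J$''. The restriction of $f$ between annular neighborhoods of the quasicircle is a degree-$d$ covering of essential annuli, so $f^{\circ n}$ admits no univalent inverse branches defined on all of $\Omega$: a section would split the index-$d^n$ inclusion of fundamental groups. One must first cut $\Omega$ along an arc transverse to $J$ (the paper cuts along $\gamma$ and passes to logarithmic coordinates, turning the annulus into a strip), define the $d^n$ branches on the resulting quadrilateral, and then account for the attractor upstairs mapping onto $J$ with two endpoints identified --- whence $d^n$ branches versus the $d^n-1$ points of $\textup{Fix}(f^{\circ n})$, and an extra term $|F_n'(\zeta_{d^n})|^{-D}$ that must be checked to vanish as $n\to\infty$. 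Your phrase ``combinatorial base arc'' gestures at this, but the IFS as you set it up is not well defined, and Theorem \ref{Falconer} cannot be invoked until it is.

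Second, the concluding inference is a non sequitur: from ``$A_n(s)\to\infty$ for every $s<D$'' and ``$A_n(s)\to 0$ for every $s>D$'' one cannot conclude that $A_n(D)$ is bounded above and away from zero (take $A_n(s)=n\,e^{n(D-s)}$, for which $A_n(D)=n\to\infty$). The correct deduction, whose ingredients you already have, is the direct sandwich: let $s_2(n)$ and $s_1(n)$ solve $\sum_i (c_i^{(n)})^{s}=1$ and $\sum_i (b_i^{(n)})^{s}=1$, so that $s_1\leq D\leq s_2$ by Theorem \ref{Falconer}. Your distortion estimates give $A_n(s_2)\geq K^{-s_2}$ and $A_n(s_1)\leq K^{s_1}$, and since $s\mapsto A_n(s)$ is decreasing,
\begin{equation*}
K^{-s_2}\leq A_n(s_2)\leq A_n(D)\leq A_n(s_1)\leq K^{s_1};
\end{equation*}
a uniform-in-$n$ bound on $s_1,s_2$ (available from the uniform expansion of $f$ on $J$) then yields $A_n(D)=\mathcal{O}(1)$ with a matching lower bound. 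This is precisely the chain \eqref{inequality} in the paper, which you should substitute for the limiting argument in your last paragraph.
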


Under the assumption of Lemma \ref{lema-fix}, $\text{Fix}(f^{\circ n})$ denotes the collection of all the repelling periodic points of $f$ with period exactly $n$. The Julia set of a hyperbolic rational map can be seen as the limit of a sequence of IFS. These IFS are defined in terms of the inverse branches of the iterations of the rational map. The original proof idea of Lemma \ref{lema-fix} comes from \cite[Lemma 2.6]{YW} and the proof appeared here is an improved version.

\begin{proof}
Let $d\geq 2$ be the degree of $f$. Since $f$ is hyperbolic and the Julia set $J$ of $f$ is a quasicircle, there exist a pair of closed annular neighborhoods $W_1,W_2$ of $J$ and a quasiconformal mapping $\phi:W_1\rightarrow\mathbb{A}_\varepsilon$, such that $\phi$ conjugates $f:W_1\rightarrow W_2$ to $z\mapsto z^d$ or $z\mapsto z^{-d}$, where $\mathbb{A}_\varepsilon:=\{z:1-\varepsilon\leq |z|\leq 1+\varepsilon\}$ is a closed annular neighborhood of the unit circle and $\varepsilon>0$ is small enough.
Without loss of generality, we only consider the first case since the completely similar argument can be applied to the second one.

In order to define IFS, it is more convenient to lift $J$ and $f$ under the exponential map. Hence we assume further that $J$ separates $0$ and $\infty$. Define a curve $\gamma:=\phi^{-1}([(1-\varepsilon)^d,(1+\varepsilon)^d])\subset W_2$. Fix a component of $\exp^{-1}(W_2\setminus\gamma)$ and denote it by $U$. Then $U$ is topologically a strip and $\exp:U\rightarrow W_2\setminus\gamma$ is conformal in the interior of $U$, whose inverse is denoted by $\log:W_2\setminus\gamma\rightarrow U$ (see Figure \ref{Fig_lift}).

\begin{figure}[!htpb]
  \setlength{\unitlength}{1mm}
  \centering
  \includegraphics[width=140mm]{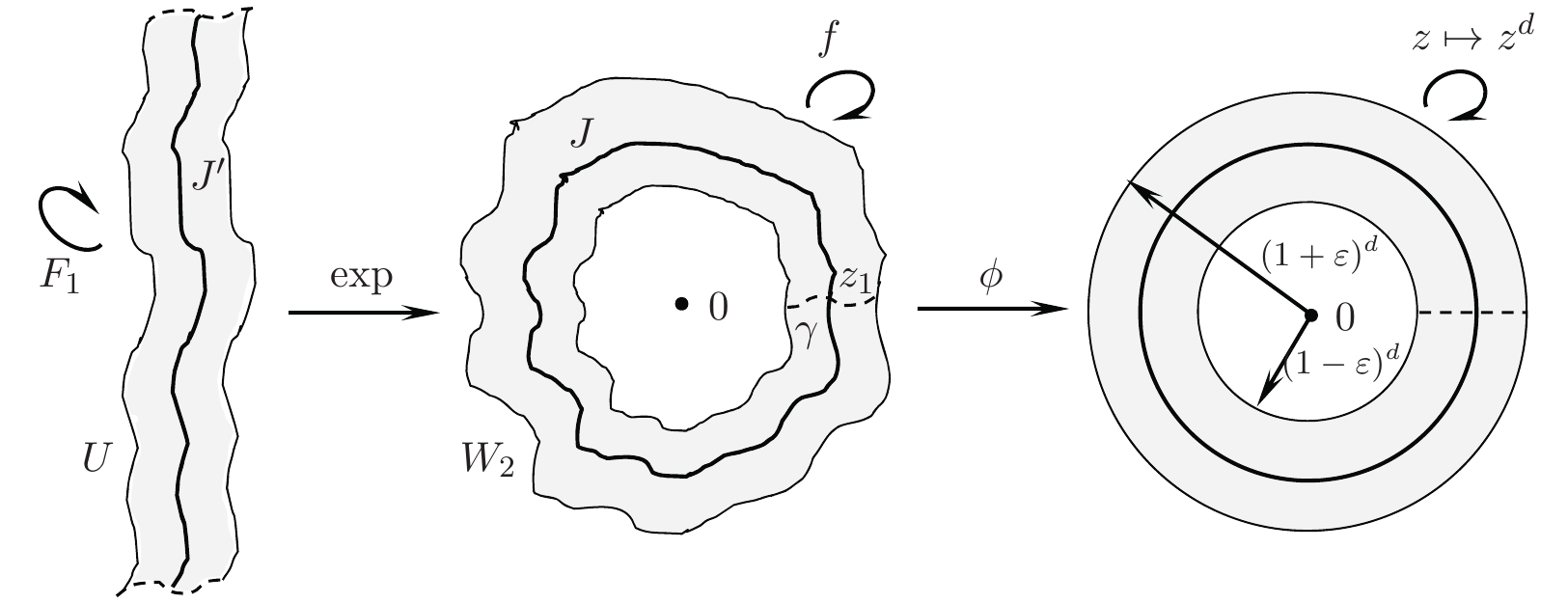}
  \caption{Sketch illustration of the construction of the IFS.}
  \label{Fig_lift}
\end{figure}

For each $n\geq 1$, the map $f^{\circ n}:W_1\rightarrow W_2$ has $d^n$ inverse branches, say $T_1,\cdots,T_{d^n}$, each maps $W_2\setminus\gamma$ onto a half open quadrilateral such that their images are arranged in anticlockwise order one by one. Let $S_i:=\log\circ T_i\circ\exp$ be the map defined in $U$, where $1\leq i\leq d^n$. It is easy to see each $S_i$ is conformal in the interior of $U$ and can be conformally extended to an open neighborhood of $\overline{U}$.

Now it is easy to see $\{S_1,\cdots,S_{d^n}\}$ is an IFS defined on $\overline{U}$ since $f$ is strictly expanding on $W_1$. The attractor $J'$ of $\{S_1,\cdots,S_{d^n}\}$ is a closed set satisfying $J=\exp(J')$. Moreover, $J\setminus\{z_1\}$ is the conformal image of $J'$ with two ends removed, where $z_1\in J\cap\gamma$ is a fixed point of $f$. This means that the Hausdorff dimensions of $J'$ and $J$ satisfy $\dim_H(J')=\dim_H(J)$.

Let $F_n|_U:=\bigsqcup_{i=1}^{d^n}S_i^{-1}|_{S_i(U)}$ be the lift of $f^{\circ n}$ under $\exp$. Then each $S_i(U)$ contains exactly one fixed point $\zeta_i\in J'$ of $F_n$ in its interior for $1<i<d^n$ and on its boundary for $i=1$ and $d^n$. Since $S_i$ can be conformally extended to an open neighborhood of $\overline{U}$, by Koebe's distortion theorem, there exist two constants $0<C_1\leq 1\leq C_2$ both independent of $n$, such that
\begin{equation*}
\frac{C_1}{|F_n'(\zeta_i)|}\leq\frac{|S_i(x)-S_i(y)|}{|x-y|}
\leq\frac{C_2}{|F_n'(\zeta_i)|},~\forall\,1\leq i\leq d^n,\  x,y\in \overline{U}.
\end{equation*}

By Theorem \ref{Falconer}, the Hausdorff dimension $D=\dim_H(J')=\dim_H(J)$ satisfies $s_1\leq D\leq s_2$, where $\sum_{i=0}^{d^n} C_j^{s_j}|F_n'(\zeta_i)|^{-s_j}=1$ and $j=1,2$. Then, we have
\begin{equation}\label{inequality}
\frac{1}{C_2^D}\leq \frac{1}{C_2^{s_2}}\leq \sum_{i=1}^{d^n}\frac{1}{|F_n'(\zeta_i)|^{s_2}}
\leq \sum_{i=1}^{d^n}\frac{1}{|F_n'(\zeta_i)|^{D}}\leq \sum_{i=1}^{d^n}\frac{1}{|F_n'(\zeta_i)|^{s_1}}
=\frac{1}{C_1^{s_1}}\leq \frac{1}{C_1^D}.
\end{equation}
The $d^n-1$ fixed points of $f^{\circ n}$ in the Julia set $J$ are $\{z_i=\exp(\zeta_i):1\leq i<d^n\}$. In particular, $z_1=\exp(\zeta_1)=\exp(\zeta_{d^n})$. Since $F_n$ is conformally conjugate to $f^{\circ n}$ in the interior of each $S_i(U)$, we have $F_n'(\zeta_i)=(f^{\circ n})'(z_i)$ for $1\leq i<d^n$. Therefore, by \eqref{inequality}, we have
\begin{equation*}
 \sum_{z\in\,\textup{Fix}(f^{\circ n})}\frac{1}{|(f^{\circ n})'(z)|^D}
=\sum_{i=1}^{d^n}\frac{1}{|(f^{\circ n})'(z_i)|^{D}}
=\sum_{i=1}^{d^n}\frac{1}{|F_n'(\zeta_i)|^{D}}-|F_n'(\zeta_{d^n})|^{-D}=\mathcal{O}(1)
\end{equation*}
since $|F_n'(\zeta_{d^n})|\rightarrow\infty$ as $n\rightarrow\infty$. The proof is complete.
\end{proof}

As the parameter $\lambda$ tends to $\infty$, the diameter of the Julia set $J_{d\lambda}$ of $T_{d\lambda}$ becomes larger and larger in the Euclidean metric and the shape of $J_{d\lambda}$ becomes more and more circular (see Figure \ref{Fig_quasicircle}). Therefore, one can make a scaling of $J_{d\lambda}$ (or equivalently, make a conjugate), such the new Julia set converges to the unit circle.
\begin{figure}[!htpb]
  \setlength{\unitlength}{1mm}
  \centering
  \includegraphics[width=70mm]{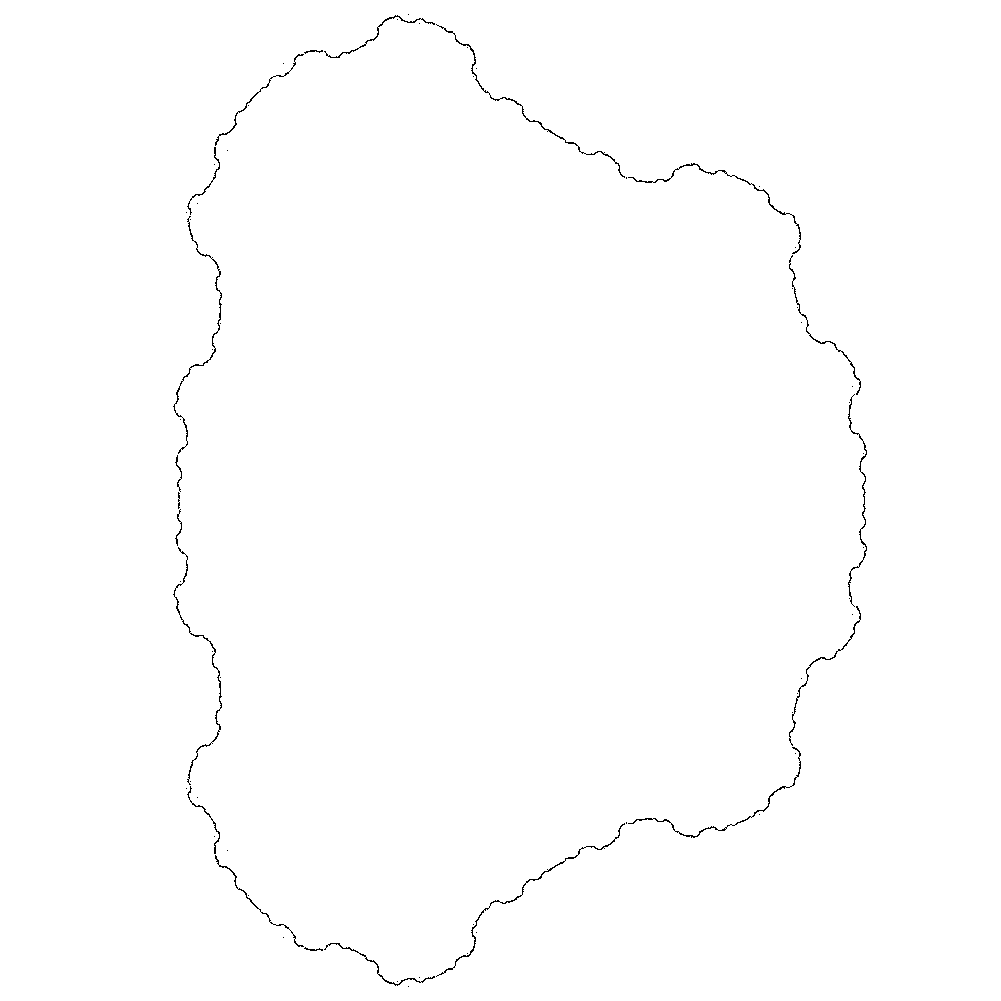}
  \includegraphics[width=70mm]{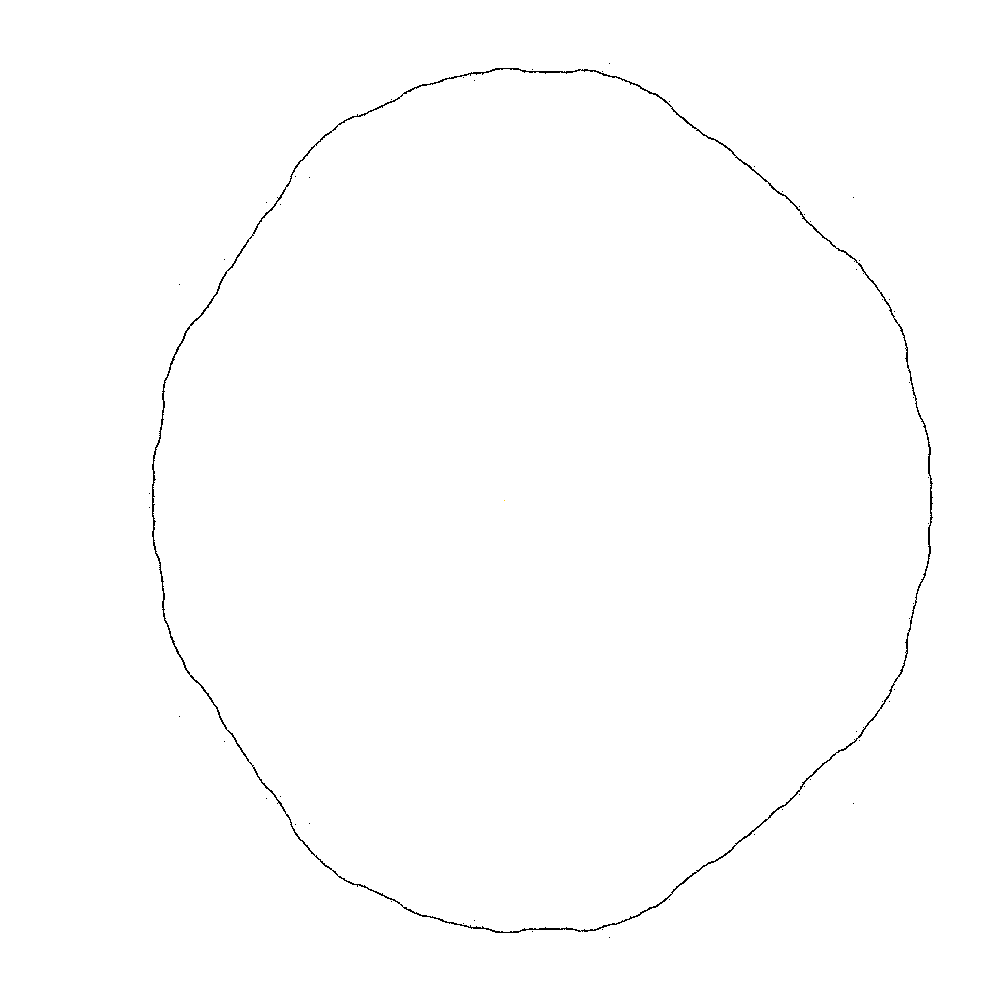}
  \caption{The Julia sets of $T_{2\lambda}$, both are quasicircles, where $\lambda=30$ and $1000$, respectively. It can be seen that the Julia set becomes more circular as the parameter $\lambda$ becomes more larger (compare the right picture in Figure \ref{Fig_Julia-Siegel}). Figure ranges: $[-10,16]\times[-13,13]$ and $[-125,125]\times[-125,125]$.}
  \label{Fig_quasicircle}
\end{figure}

Specifically, define
\begin{equation}
J_{d\lambda}^*=\{\lambda^{-\frac{d}{d+1}}(z-1) : z\in J_{d\lambda}\}.
\end{equation}
The following Lemma \ref{limit-mcm} has been proved in \cite[Theorem 4.3]{Qi} as a special case.

\begin{lema}\label{limit-mcm}
The scaled Julia set $J_{d\lambda}^*$ converges to the unit circle in the Hausdorff topology as $\lambda$ tends to $\infty$ and the Hausdorff dimension of $J_{d\lambda}$ tends to $1$ as $\lambda$ tends to $\infty$.
\end{lema}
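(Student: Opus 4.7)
The strategy is to recognize that the scaling $z\mapsto \lambda^{-d/(d+1)}(z-1)$ used to define $J_{d\lambda}^*$ is precisely the conjugation $\varphi_\alpha$ already introduced in the proof of Proposition \ref{prop-H-0-unbound}. Indeed, writing $\alpha:=\lambda^{-1/(d+1)}$, we have $\alpha^d=\lambda^{-d/(d+1)}$, so $\varphi_\alpha(z)=\alpha^d(z-1)$ and therefore
\[
J_{d\lambda}^* \;=\; \varphi_\alpha(J_{d\lambda}) \;=\; J(f_\alpha),
\]
where $f_\alpha = \varphi_\alpha\circ T_{d\lambda}\circ\varphi_\alpha^{-1}$ is the explicit rational map of degree $d$ computed in that proposition. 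The problem is thereby transported to the family $\{f_\alpha\}$ in the limit $\alpha\to 0$.

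First I would check that, for some $r>0$, $\{f_\alpha\}_{\alpha\in\D(0,r)}$ is a holomorphic family of hyperbolic rational maps including the limit point $\alpha=0$. Holomorphic dependence on $\alpha$ is immediate because the coefficients of $f_\alpha$ are polynomials in $\alpha$; the degree does not drop at $\alpha=0$, since the coefficient of $1/z^d$ is identically $1$; and $f_0(z)=1/z^d$ is hyperbolic with Julia set equal to the unit circle $S^1$. For $0<|\alpha|<r$, Proposition \ref{prop-H-0-unbound} already guarantees that $f_\alpha$ is hyperbolic with a quasicircle Julia set.

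Next, for the Hausdorff-topology convergence $J(f_\alpha)\to S^1$, I would invoke the fact that within a hyperbolic component the Julia set moves holomorphically, hence depends continuously on the parameter in the Hausdorff topology (Ma\~n\'e--Sad--Sullivan $\lambda$-lemma, or equivalently the fact that each $f_\alpha$ with $\alpha$ near $0$ is quasiconformally conjugate to $f_0$ via a holomorphic motion). Thus $J(f_\alpha)\to J(f_0)=S^1$ as $\alpha\to 0$, which is the same as $\lambda\to\infty$. For the dimension statement I would apply Theorem \ref{Ruelle} directly to the family $\{f_\alpha\}_{\alpha\in\D(0,r)}$: the function $\alpha\mapsto\dim_H(J(f_\alpha))$ is real-analytic on a neighborhood of $0$, in particular continuous, so it tends to $\dim_H(J(f_0))=\dim_H(S^1)=1$ as $\alpha\to 0$.

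The main---though ultimately mild---subtlety is the care needed at the boundary value $\alpha=0$. One must confirm that the algebraic expression for $f_\alpha$ really does extend holomorphically across $\alpha=0$ without a change of degree (it does, from the explicit formula in Proposition \ref{prop-H-0-unbound}), and that $\alpha=0$ lies in the interior of the hyperbolic parameter region, so that both the $\lambda$-lemma and Ruelle's theorem apply uniformly through $0$. Once this is in place, the lemma follows from standard continuity principles, with no direct computation on $J_{d\lambda}$ itself.
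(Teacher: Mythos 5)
Your argument is correct, but note that the paper does not actually prove this lemma: it is imported wholesale from \cite[Theorem 4.3]{Qi} and used as a black box. What you have written is therefore a genuine self-contained proof rather than a variant of the paper's, and it is assembled entirely from ingredients the authors themselves set up in \S\ref{pf-asym-fml}: the identity $J_{d\lambda}^*=\varphi_\alpha(J_{d\lambda})=J_\alpha$ with $\alpha=\lambda^{-1/(d+1)}$ (recorded in the paper as the remark ``$J_\alpha=J_{d\lambda}^*$''), the fact that $f_\alpha$ is a holomorphic family of hyperbolic degree-$d$ rational maps on a disk $\D_\varepsilon$ containing $\alpha=0$ with $f_0(z)=z^{-d}$, the Ma\~n\'e--Sad--Sullivan holomorphic motion $\phi_\alpha$ of $J_0=\T$ (which the paper invokes anyway in the proof of Theorem \ref{thm-Haus}), and Ruelle's Theorem \ref{Ruelle}. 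Joint continuity of the motion on the compact set $\T$ gives uniform convergence $\phi_\alpha\to\mathrm{id}$, hence Hausdorff convergence $J_\alpha\to\T$; real-analyticity of $\alpha\mapsto\dim_H(J_\alpha)$ at $\alpha=0$, together with $\dim_H(\T)=1$ and invariance of dimension under the affine map $\varphi_\alpha$, gives the dimension statement. Two small points deserve an explicit sentence: first, writing $f_\alpha(z)=\bigl((1+\alpha z)^d-(\alpha z)^d\bigr)/z^d$, the numerator equals $1$ at $z=0$, so numerator and denominator never share a root and the degree is $d$ for every $\alpha$ including $\alpha=0$ (you address this); second, $\alpha=\lambda^{-1/(d+1)}$ is defined only up to a $(d+1)$-st root of unity, but every determination tends to $0$ as $\lambda\to\infty$ and the unit circle is rotation invariant, so the ambiguity is harmless. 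Your approach buys a proof internal to the paper's own toolkit, at the cost of leaning on the $\lambda$-lemma and Ruelle's theorem rather than on the cited external estimate.
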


Although Lemma \ref{limit-mcm} is significant, however, we want to know further about the asymptotic formula of of the Hausdorff dimension of $J_{d\lambda}$ as $\lambda$ tends to $\infty$. In order to calculate the Hausdorff dimension of $J_{d\lambda}$, we do some setting first.

Recall that in Proposition \ref{prop-H-0-unbound}, $\alpha=\lambda^{-\frac{1}{d+1}}$. Then $\lambda\alpha^d=\alpha^{-1}$. Let $\varphi_\alpha(z)=\alpha^d(z-1)$ be the linear transformation as before. We define a new rational map with parameter $\alpha$ as
\begin{equation}\label{f-beta}
f_\alpha(z):= \varphi_\alpha\circ T_{d\lambda}\circ\varphi_\alpha^{-1}=\sum_{i=0}^{d-1}\frac{C_d^i\,\alpha^i}{z^{d-i}}
=\frac{1}{z^d}+\frac{C_d^1\alpha}{z^{d-1}}+\cdots+\frac{C_d^1\alpha^{d-1}}{z}.
\end{equation}
This means that there exists a small $\varepsilon>0$ such that $f_\alpha:\D_\varepsilon\times\EC\rightarrow\EC$ is a holomorphic family of hyperbolic rational maps parameterized by $\D_\varepsilon$, where $\D_\varepsilon:=\{z:|z|<\varepsilon\}$.
Note that the Hausdorff dimension is invariant under a conformal isomorphism. This means that we only need to calculate the Hausdorff dimension of the Julia set $J_\alpha$ of $f_\alpha$ with $\alpha\in\D_\varepsilon$ since $\dim_H(J_\alpha)=\dim_H(J_{d\lambda})$. We would like to remark that $J_\alpha=J_{d\lambda}^*$.

Let $E$ be a subset of $\EC$ and $(\Lambda,\lambda_0)$ a connected complex manifold with basepoint $\lambda_0$. A family of maps $h_\lambda:E\rightarrow\EC$ is called a \textit{holomorphic motion} of $E$ parameterized by $\Lambda$ and with base point $\lambda_0$ if: (1) For each $\lambda\in\Lambda$, $h_\lambda$ is injective on $E$; (2) For each $z\in E$, $h_\lambda(z)$ is a holomorphic function of $\lambda\in\Lambda$; and (3) $h_{\lambda_0}$ is identity on $E$ (see \cite{Ly}, \cite{MSS} or \cite[Chap.\,4]{Mc}).

\vskip0.2cm
\noindent\textit{Proof of Theorem \ref{thm-Haus}.}
By \eqref{f-beta}, it follows that the Julia set $J_\alpha$ is the unit circle if $\alpha=0$.
For $z\in J_0=\T$, we have $f_0(z)=z^{-d}$. Note that $f_\alpha$ is a holomorphic family of hyperbolic rational maps with parameter $\alpha\in\D_\varepsilon$. There exists a holomorphic motion $\phi_\alpha:J_0\rightarrow\EC$ of $J_0$ parameterized by $\D_\varepsilon$ and with base point $0$ such that $\phi_\alpha(J_0)=J_\alpha$ and
\begin{equation}\label{conjugation}
f_\alpha\circ\phi_\alpha(z)=\phi_\alpha\circ f_0(z)=\phi_\alpha(z^{-d})
\end{equation}
for all $z\in J_0$, see \cite[Chap. 4]{Mc}. Since every point on $J_0$ moves holomorphically, we can write $\phi_\alpha(z)$ in power series of $\alpha$ as
\begin{equation}\label{z(t)}
\phi_\alpha(z)=z\,(1+u_1(z)\alpha+u_2(z)\alpha^2+\mathcal{O}(\alpha^3)),
\end{equation}
where $z\in J_0$.

In the following, we adopt the notation $q:=-d$ since the negative sign is boring in the expressions during the calculation. Meantime, we assume that $d\geq 3$ first. If $\alpha$ is small enough, we can expand $f_\alpha$ in \eqref{f-beta} in power series of $\alpha$ as
\begin{equation}\label{f-beta-expand}
f_\alpha(z)= z^q-qz^{q+1}\alpha+\frac{q(q+1)}{2}z^{q+2}\alpha^2+\mathcal{O}(\alpha^3).
\end{equation}

Substituting \eqref{z(t)} and \eqref{f-beta-expand} into \eqref{conjugation}, then comparing the terms to the second order in $\alpha$, we obtain the following equations:
\begin{eqnarray}
u_1(z^q)-qu_1(z) &=& -qz,  \label{equation_1}\\
u_2(z^q)-qu_2(z) &=& \frac{q(q-1)}{2}u_1^2(z)-q(q+1)zu_1(z)+\frac{q(q+1)}{2}z^2. \label{equation_2}
\end{eqnarray}
For each non-zero integer $l\in\Z$, the functional equation
\begin{equation}\label{fun-equ}
u(z^q)-qu(z) = -q z^l
\end{equation}
has the formal solution
\begin{equation}\label{solution}
u(z)=\sum_{k=0}^{+\infty}\frac{z^{lq^{k}}}{q^k}.
\end{equation}
Note that the solution \eqref{solution} is convergent if $|z|\leq 1$. This means that the solution of \eqref{equation_1} is
\begin{equation}\label{solution_u1}
u_1(z)=\sum_{k=0}^{+\infty}\frac{z^{q^k}}{q^k}.
\end{equation}
Therefore, the equation \eqref{equation_2} can be reduced to
\begin{equation}
u_2(z^q)-qu_2(z) = -q \left( (q+1)\sum_{l=0}^{+\infty}\frac{z^{q^l+1}}{q^l}
-\frac{q-1}{2}\left(\sum_{l=0}^{+\infty}\frac{z^{q^l}}{q^l}\right)^2 -\frac{q+1}{2}z^2\right).
\end{equation}
By \eqref{fun-equ} and \eqref{solution}, the solution of $u_2$ is
\begin{equation}\label{solution_u2}
u_2(z)=\sum_{k=0}^{+\infty}\left((q+1)\sum_{l=0}^{+\infty}\frac{z^{q^{l+k}+q^{k}}}{q^{l+k}}
         -\frac{(q-1)}{2q^{k}}\left(\sum_{l=0}^{+\infty}\frac{z^{q^{l+k}}}{q^{l}}\right)^2
         -\frac{(q+1)}{2q^{k}}z^{2q^{k}}\right).
\end{equation}

For each $n\geq 1$, the collection of the fixed points of $f_\alpha^{\circ n}$ on the Julia set $J_\alpha$ forms the finite set
\begin{equation}\label{fixed_point}
\text{Fix}(f_\alpha^{\circ n})=\left\{\phi_\alpha(e^{2\pi it_j}):t_j=\frac{j}{q^n-1},1\leq j\leq |q^n-1|\right\}.
\end{equation}
By \eqref{conjugation} and the chain rule, we have
$(f_\alpha^{\circ n})'(\phi_\alpha(e^{2\pi it_j}))=\prod_{m=0}^{n-1}f'_\alpha(\phi_\alpha(e^{2\pi iq^m t_j}))$.
The calculation in Appendix (\S\ref{sec-appendix}) shows that for every $D>0$ and all sufficiently large $n$, the following holds:
\begin{equation}\label{appendix}
\frac{1}{|q^n-1|}\sum_{j=1}^{|q^n-1|}\prod_{m=0}^{n-1} \left|{f'_\alpha}(\phi_\alpha(e^{2\pi i q^m t_j}))\right|^{-D}
=|q|^{-nD}\left(1+\frac{D^2n}{4}|\alpha|^2+\mathcal{O}(\alpha^3)\right).
\end{equation}
Let $D_\alpha:=\dim_H(J_\alpha)$ be the Hausdorff dimension of $J_\alpha$. One can write the corresponding \eqref{O(1)=} of $f_\alpha$ in Lemma \ref{lema-fix} as
\begin{equation}\label{q^n-1}
|q^n-1|\,|q|^{-nD_\alpha}\left(1+\frac{D_\alpha^2n}{4}|\alpha|^2+\mathcal{O}(\alpha^3)\right) =\mathcal{O}(1).
\end{equation}

Fix some large $n$, when $\alpha$ is small enough, \eqref{q^n-1} is equivalent to
\begin{equation}\label{exp}
\exp{\left(n\left(\frac{D_\alpha^2}{4}|\alpha|^2-(D_\alpha-1)\log{|q|}\right)
+\mathcal{O}(\alpha^3)\right)}=\mathcal{O}(1).
\end{equation}
By Theorem \ref{Ruelle} and Lemma \ref{limit-mcm}, $D_\alpha$ depends real analytically on $\alpha$ in a small neighborhood of the origin and $D_0=1$. This means that in a small neighborhood of 0, the Hausdorff dimension of $J_\alpha$ can be written as
\begin{equation}\label{finish-last}
D_\alpha=1+a_{10}\alpha+a_{01}\overline{\alpha}+a_{20}\alpha^2+a_{02}\overline{\alpha}^2+a_{11}|\alpha|^2+\mathcal{O}(\alpha^3).
\end{equation}
Substituting \eqref{finish-last} into \eqref{exp} and comparing the corresponding coefficients, we have
\begin{equation}
a_{10}=a_{01}=a_{20}=a_{02}=0\text{~and~} a_{11}=1/(4\log|q|).
\end{equation}
This means that
\begin{equation}\label{finish}
D_\alpha=1+\frac{|\alpha|^2}{4\log{|q|}}+\mathcal{O}(\alpha^3).
\end{equation}
Note that $q=-d$ and $\alpha=\lambda^{-\frac{1}{d+1}}$. This ends the proof of Theorem \ref{thm-Haus} in the case of $d\geq 3$.

If $d=2$, then \eqref{f-beta-expand} can be written as $f_\alpha(z)= z^q-qz^{q+1}\alpha$. Following the calculation process of $d\geq 3$ and carefully omitting some corresponding terms, it can be checked that Theorem \ref{thm-Haus} still holds for $d=2$. The proof is complete.
\hfill $\square$

\section{Appendix}\label{sec-appendix}

This section will devote to proving \eqref{appendix}. From \eqref{f-beta-expand}, we have
\begin{equation}\label{f-beta-deri}
f_\alpha'(z)= q z^{q-1}-q(q+1)z^q\alpha+\frac{q(q+1)(q+2)}{2}z^{q+1}\alpha^2+\mathcal{O}(\alpha^3).
\end{equation}
Substituting \eqref{z(t)} into \eqref{f-beta-deri}, we have
\begin{equation}\label{f_alpha-de}
\begin{split}
f'_\alpha(\phi_\alpha(z))=
&~q{z}^{q-1}+ q z^{q-1}[(q-1)u_1(z)-(q+1)z]\,\alpha
  + qz^{q-1}\left[ \frac{(q+1)(q+2)}{2}z^2\right.\\
&\left.  +\frac{(q-1)(q-2)}{2}u_1^2(z)-q(q+1)z u_1(z)+(q-1)u_2(z)\right]\alpha^2+\MO(\alpha^3).
\end{split}
\end{equation}

Define $\sigma:=\sigma(t)=e^{2\pi i t}\in\T$. Then $\sigma\overline{\sigma}=1$. For $0\leq m\leq n-1$, by \eqref{f_alpha-de}, we have
\begin{equation}\label{f_alpha-de-m}
\begin{split}
 &~ |f'_\alpha(\phi_\alpha(\sigma^{q^m}))|^2
    = f'_\alpha(\phi_\alpha(\sigma^{q^m}))\,\overline{f'_\alpha(\phi_\alpha(\sigma^{q^m}))} \\
=&~ q^2+A_m\alpha+\overline{A}_m\overline{\alpha}+A_m\overline{A}_m|\alpha|^2/q^2
    +B_m\alpha^2+\overline{B}_m\overline{\alpha}^2+\mathcal{O}(\alpha^3),
\end{split}
\end{equation}
where
\begin{equation}\label{A_m}
A_m =  q^2(q-1)\,u_1(\sigma^{q^m})-q^2(q+1)\,\sigma^{q^m}
\end{equation}
and
\begin{equation}\label{B_m}
\begin{split}
B_m = &~ \frac{q^2(q+1)(q+2)}{2}{\sigma}^{2q^m} + \frac{q^2(q-1)(q-2)}{2}u_1^2(\sigma^{q^m})\\
      &~ -q^3(q+1){\sigma}^{q^m}u_1(\sigma^{q^m}) + q^2(q-1)u_2(\sigma^{q^m}).
\end{split}
\end{equation}

For every $D>0$, by \eqref{f_alpha-de-m}, we have
\begin{equation}\label{times-f'}
\begin{split}
& \prod_{m=0}^{n-1}|f'_\alpha(\phi_\alpha(\sigma^{q^m}))|^{-D}
   =\prod_{m=0}^{n-1}(|f'_\alpha(\phi_\alpha(\sigma^{q^m}))|^2)^{-\frac{D}{2}} \\
& =|q|^{-nD}\prod_{m=0}^{n-1}
   \left(1+\frac{A_m\alpha+\overline{A}_m\overline{\alpha}+B_m\alpha^2+\overline{B}_m\overline{\alpha}^2}{q^2}
  +\frac{A_m\overline{A}_m|\alpha|^2}{q^4}+\MO(\alpha^3)\right)^{-\frac{D}{2}}\\
& = |q|^{-nD}-\frac{D}{2}|q|^{-nD-2}\sum_{m=0}^{n-1}\left(A_m\alpha+\overline{A}_m\overline{\alpha}+
   B_m\alpha^2+\overline{B}_m\overline{\alpha}^2\right)\\
&  -\frac{D}{2}|q|^{-nD-4}\left(\sum_{0\leq m_1<m_2\leq n-1}(A_{m_1}A_{m_2}\alpha^2+\overline{A}_{m_1}
   \overline{A}_{m_2}\overline{\alpha}^2)
   + \sum_{0\leq m_1,m_2\leq n-1}A_{m_1}\overline{A}_{m_2}|\alpha|^2\right)\\
   &+\frac{D(D+2)}{8}|q|^{-nD-4}\left(\sum_{m=0}^{n-1}(A_m\alpha+\overline{A}_m\overline{\alpha})\right)^2
     +\mathcal{O}(\alpha^3).
\end{split}
\end{equation}

\begin{lema}\label{lemma_Appendix}
Let $m,m_1,m_2\in \mathbb{N}$. If $n\geq 1$, then:

$(1)~q^m\not\equiv 0 \textup{ mod }q^n-1$.

$(2)~q^{m_1}+q^{m_2}\not\equiv 0 \textup{ mod }q^n-1$.

$(3)~q^{m_1}-q^{m_2} \equiv 0 \textup{ mod }q^n-1$ if and only if $m_1-m_2=kn$ for some $k\in\Z$.
\end{lema}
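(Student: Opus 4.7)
The three claims are elementary number-theoretic facts about divisibility in $\Z$, and my plan is to handle them uniformly with two standard tools. The first is coprimality: from the identity $q\cdot q^{n-1}-(q^n-1)=1$ one reads off $\gcd(q,q^n-1)=1$, so every power $q^m$ is a unit modulo $q^n-1$ and can be cancelled from any divisibility relation. The second is a crude size estimate: since the appendix operates under the standing assumption $d\geq 3$, i.e.\ $|q|\geq 3$, one has $|q^n-1|\geq |q|^n-1$, while $|q^r\pm 1|\leq |q|^r+1$, and the gap $|q|^n-|q|^{n-1}=|q|^{n-1}(|q|-1)$ is comfortably at least $6$ as soon as $n\geq 2$.

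For (1), coprimality forces $q^n-1\mid q^m$ to imply $|q^n-1|=1$, which is impossible because a direct check gives $|q^n-1|\geq 4$ for $q=-d$ with $d\geq 3$ and $n\geq 1$.

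For (2) and (3) the plan is identical. Use the factorisations
\begin{equation*}
q^{m_1}+q^{m_2}=q^{\min(m_1,m_2)}\bigl(1+q^{|m_1-m_2|}\bigr),\qquad
q^{m_1}-q^{m_2}=q^{\min(m_1,m_2)}\bigl(q^{|m_1-m_2|}-1\bigr);
\end{equation*}
by coprimality the two questions reduce to whether $q^n-1$ divides $1+q^k$ or $q^k-1$ for the nonnegative integer $k=|m_1-m_2|$. Perform Euclidean division $k=sn+r$ with $0\leq r<n$; since $q^{sn}\equiv 1\pmod{q^n-1}$, both congruences may be rewritten with $r$ in place of $k$. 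The ``if'' direction of (3) is then immediate because $r=0$ gives $q^r-1=0$.

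The essential remaining work, and the only place where one has to be a little careful, is the ``only if'' direction of (3) together with the full statement (2): one must show that $q^n-1$ cannot divide $q^r-1$ or $q^r+1$ when $1\leq r<n$. Here I would first rule out vanishing ($q^r=\pm 1$ has no solution for $|q|\geq 3$ and $r\geq 1$) and then invoke the size estimate $|q^r\pm 1|\leq |q|^{n-1}+1<|q|^n-1\leq |q^n-1|$, valid whenever $|q|\geq 3$ and $n\geq 2$. The degenerate case $n=1$ is handled separately and is either vacuous (since $r<n=1$ forces $r=0$) or reduces to noting that $|q-1|\geq 4$ does not divide $2$. No substantive obstacle is anticipated beyond the bookkeeping of these boundary cases.
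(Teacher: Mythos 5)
Your proposal is correct and follows essentially the same route as the paper: coprimality of $q$ with $q^n-1$ reduces (2) and (3) to whether $q^n-1$ divides $q^k+1$ or $q^k-1$, Euclidean division of the exponent by $n$ reduces further to a remainder $0\leq r<n$, and a size comparison $|q^r\pm 1|<|q^n-1|$ finishes the job. Your explicit factorisation $q^{\min(m_1,m_2)}(q^{|m_1-m_2|}\pm 1)$ and the attention to the $n=1$ and $r=0$ boundary cases are just slightly more careful renderings of steps the paper leaves implicit.
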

\begin{proof}
Since $(q,q^n-1)=1$, it means that $(q^m,q^n-1)=1$ for $m\geq 0$. Then (1) follows.

To prove (2), it suffices to show that $q^m+1\not\equiv 0 \textup{ mod }q^n-1$ for $m\geq 0$ since $q^n-1$ is relative prime to $q^{m'}$ for $m'\geq 0$ by (1). Set $m=kn+r$, where $k\geq 0$ and $0\leq r \leq n-1$. We have
\begin{equation*}
q^m+1= q^{kn+r}-q^r+q^r+1 \equiv q^r+1 \not\equiv 0 \textup{ mod }q^n-1
\end{equation*}
since $0<|q^r+1|<|q^n-1|$.

The proof of (3) is similar to that of (2). Since $q^n-1$ is relative prime to $q^{m'}$ for $m'\geq 0$, we need to find out the condition on $m$ such that $q^m-1\equiv 0 \textup{ mod }q^n-1$ for fixed $n\geq 1$.
Set $m=kn+r$, where $k\geq 0$ and $0\leq r \leq n-1$. We have
\begin{equation*}
q^m-1= q^{kn+r}-q^r+q^r-1 \equiv q^r-1 \textup{ mod }q^n-1.
\end{equation*}
This means that $q^m-1\equiv 0 \textup{ mod }q^n-1$ if and only if $r=0$ since $|q^r-1|<|q^n-1|$.
\end{proof}

Following \cite[$\S$ 2]{WBKS}, it is convenient to introduce the \emph{average notation}
\begin{equation}\label{average}
\langle G(t)\rangle_n:=\frac{1}{|q^n-1|}\sum_{j=1}^{|q^n-1|}G(t_j),
\end{equation}
where $G$ is a continuous function defined on the interval $[0,1)$ and $t_j=j/(q^n-1)$ is defined in \eqref{fixed_point}.

In order to prove \eqref{appendix}, we only need to prove for every $D>0$ and sufficiently large $n$, the following holds
\begin{equation}\label{appendix-reduce}
\left\langle\prod_{m=0}^{n-1} \left|{f'_\alpha}(\phi_\alpha(\sigma^{q^m}))\right|^{-D}\right\rangle_n
=|q|^{-nD}\left(1+\frac{D^2n}{4}|\alpha|^2+\mathcal{O}(\alpha^3)\right).
\end{equation}

For each $n\geq 1$ and any $k\in\Z$, it is straightforward to verify the average in \eqref{average} has the following useful property:
\begin{equation}\label{ave_property}               
\langle \sigma^k \rangle_n=\langle e^{2\pi i kt} \rangle_n=
\left\{                         
\begin{array}{ll}               
1  &~~~\text{if}~k\equiv 0~\text{mod}~q^n-1, \\             
0  &~~~\text{otherwise.}    
\end{array}                     
\right.                         
\end{equation}

\begin{lema}\label{lema-app-2}
For $0\leq m,m_1,m_2\leq n-1$, we have
$\langle \sigma^{q^m}\rangle_n=0$,
$\langle u_1(\sigma^{q^m})\rangle_n=0$,
$\langle \sigma^{q^{m_1}+q^{m_2}}\rangle_n=0$,
$\langle {\sigma}^{q^{m_1}}u_1(\sigma^{q^{m_2}})\rangle_n=0$,
$\langle u_1(\sigma^{q^{m_1}})u_1(\sigma^{q^{m_2}})\rangle_n=0$
and $\langle u_2(\sigma^{q^m})\rangle_n=0$.
\end{lema}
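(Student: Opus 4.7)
The plan is to reduce each of the six claimed averages to a sum of averages of pure monomials $\sigma^e$ and then read off that every exponent $e$ falls into one of the cases forbidden by Lemma \ref{lemma_Appendix}, so that \eqref{ave_property} kills every term. Since the coefficients in the series \eqref{solution_u1} and \eqref{solution_u2} for $u_1$ and $u_2$ decay geometrically like $|q|^{-k} = d^{-k}$ while $|\sigma|=1$, the series and any finite product of them converge absolutely and uniformly on the unit circle. Consequently, interchanging the finite average $\langle\,\cdot\,\rangle_n$ with the infinite sum (or double sum) is legal, and the averaging distributes onto each monomial separately.

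First I would handle the three easy cases. The quantity $\langle \sigma^{q^m}\rangle_n$ is already a single monomial with exponent $q^m$, and Lemma \ref{lemma_Appendix}(1) gives $q^m\not\equiv 0 \pmod{q^n-1}$, so \eqref{ave_property} yields $0$. Expanding
\[
u_1(\sigma^{q^m}) \;=\; \sum_{k\geq 0}\frac{\sigma^{q^{m+k}}}{q^{k}},
\]
every exponent $q^{m+k}$ is again a single power of $q$, so the same part of Lemma \ref{lemma_Appendix} applies. For $\langle \sigma^{q^{m_1}+q^{m_2}}\rangle_n$, the exponent is of the form $q^{a_1}+q^{a_2}$ and Lemma \ref{lemma_Appendix}(2) gives the result.

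Next I would do the two product cases. Expanding
\[
\sigma^{q^{m_1}} u_1(\sigma^{q^{m_2}}) \;=\; \sum_{k\geq 0}\frac{\sigma^{q^{m_1}+q^{m_2+k}}}{q^{k}}
\quad\text{and}\quad
u_1(\sigma^{q^{m_1}})\,u_1(\sigma^{q^{m_2}}) \;=\; \sum_{k_1,k_2\geq 0}\frac{\sigma^{q^{m_1+k_1}+q^{m_2+k_2}}}{q^{k_1+k_2}},
\]
one sees that every exponent is again a sum of two nonnegative powers of $q$, so Lemma \ref{lemma_Appendix}(2) and \eqref{ave_property} finish both claims.

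Finally, the main bookkeeping step is $\langle u_2(\sigma^{q^m})\rangle_n$. I would substitute $z=\sigma^{q^m}$ into \eqref{solution_u2} and treat its three summands separately. The first contributes exponents $q^{m+l+k}+q^{m+k}$; squaring $\sum_{l}q^{-l}\sigma^{q^{m+l+k}}$ in the middle term produces exponents $q^{m+l_1+k}+q^{m+l_2+k}$; and the last term $\frac{q+1}{2q^k}\sigma^{2q^{m+k}}$ has exponent $2q^{m+k}=q^{m+k}+q^{m+k}$. In every case the exponent is a sum of two nonnegative powers of $q$, hence nonzero mod $q^n-1$ by Lemma \ref{lemma_Appendix}(2). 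The only real obstacle is this bookkeeping for $u_2$: one must be careful when squaring the inner series and check that the resulting exponents retain the form $q^a+q^b$ after all the shifts by $k$; everything else is a routine application of \eqref{ave_property}.
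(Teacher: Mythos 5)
Your proposal is correct and follows essentially the same route as the paper, which proves the lemma by citing exactly the same three ingredients (the series \eqref{solution_u1} and \eqref{solution_u2}, the average property \eqref{ave_property}, and Lemma \ref{lemma_Appendix}(1)(2)) and leaving the monomial bookkeeping to the reader; you have simply written out that bookkeeping, correctly observing that every exponent that arises is either a single power $q^a$ or a sum $q^a+q^b$ of two nonnegative powers of $q$, so each average vanishes.
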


\begin{proof}
By \eqref{solution_u1} and \eqref{solution_u2}, the average property \eqref{ave_property} and Lemma \ref{lemma_Appendix}(1)(2), the equations stated in the Lemma can be verified directly.
\end{proof}

As an immediate corollary of Lemma \ref{lema-app-2}, from \eqref{A_m} and \eqref{B_m}, we have

\begin{cor}\label{appendix_cor}
$\left\langle A_m\right\rangle_n=\left\langle\overline{A}_m\right\rangle_n=0,
\left\langle B_m\right\rangle_n=\left\langle\overline{B}_m\right\rangle_n=0,
\left\langle A_{m_1}A_{m_2}\right\rangle_n=\left\langle\overline{A}_{m_1}\overline{A}_{m_2}\right\rangle_n=0
$ for $0\leq m,m_1,m_2\leq n-1.$
\end{cor}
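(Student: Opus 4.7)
The plan is to deduce Corollary \ref{appendix_cor} as a routine consequence of Lemma \ref{lema-app-2} by expanding each of $A_m$, $B_m$, and $A_{m_1}A_{m_2}$ into its elementary constituents and invoking the linearity of the average $\langle \cdot \rangle_n$ together with the identity $\overline{\langle G\rangle_n} = \langle \overline{G}\rangle_n$.

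First I would handle $\langle A_m\rangle_n$. By definition $A_m$ is a fixed $\C$-linear combination of $u_1(\sigma^{q^m})$ and $\sigma^{q^m}$, and both averages vanish by Lemma \ref{lema-app-2}. Complex conjugation then gives $\langle \overline{A}_m\rangle_n = 0$. Next, for $\langle B_m\rangle_n$, the four summands in the definition are (up to constants) $\sigma^{2q^m}$, $u_1^2(\sigma^{q^m})$, $\sigma^{q^m}u_1(\sigma^{q^m})$, and $u_2(\sigma^{q^m})$. The first is the special case $m_1=m_2=m$ of $\langle \sigma^{q^{m_1}+q^{m_2}}\rangle_n$; the second is the special case $m_1=m_2=m$ of $\langle u_1(\sigma^{q^{m_1}})u_1(\sigma^{q^{m_2}})\rangle_n$; the third and fourth appear explicitly in Lemma \ref{lema-app-2}. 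All four averages vanish, hence $\langle B_m\rangle_n = 0$, and conjugation handles $\langle \overline{B}_m\rangle_n$.

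Finally, for $\langle A_{m_1}A_{m_2}\rangle_n$, I would expand the product to obtain four terms, each a fixed constant times one of $\sigma^{q^{m_1}+q^{m_2}}$, $\sigma^{q^{m_1}}u_1(\sigma^{q^{m_2}})$, $u_1(\sigma^{q^{m_1}})\sigma^{q^{m_2}}$, and $u_1(\sigma^{q^{m_1}})u_1(\sigma^{q^{m_2}})$. Each of these averages is listed in Lemma \ref{lema-app-2} as $0$, so linearity yields $\langle A_{m_1}A_{m_2}\rangle_n = 0$. The conjugate version is identical.

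There is essentially no obstacle: the substantive work has already been done in Lemma \ref{lema-app-2}, where the combinatorics of $q$-adic residues mod $q^n-1$ (Lemma \ref{lemma_Appendix}) together with the series expansions \eqref{solution_u1}--\eqref{solution_u2} of $u_1$ and $u_2$ are used to show that no exponent in any of the relevant series reduces to $0$ modulo $q^n-1$. Once those vanishing averages are recorded, the corollary is just bookkeeping via linearity.
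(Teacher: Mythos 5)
Your proposal is correct and follows exactly the paper's route: the paper states the corollary as an immediate consequence of Lemma \ref{lema-app-2} applied term by term to the expansions \eqref{A_m} and \eqref{B_m}, which is precisely the linearity-plus-conjugation bookkeeping you carry out (including the observation that $\sigma^{2q^m}$ and $u_1^2(\sigma^{q^m})$ are the diagonal cases $m_1=m_2$ of the averages in the lemma). Your write-up simply makes explicit what the paper leaves as ``immediate.''
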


By \eqref{times-f'} and Corollary \ref{appendix_cor}, we have
\begin{equation}\label{appendix_ave}
\left\langle\prod_{m=0}^{n-1}|f'_\alpha(\phi_\alpha(\sigma^{q^m}))|^{-D}\right\rangle_n
=|q|^{-nD}\left(1+\frac{D^2}{4}|q|^{-4}\sum_{0\leq m_1,m_2\leq n-1}\langle A_{m_1}\overline{A}_{m_2}\rangle_n |\alpha|^2\right) +\mathcal{O}(\alpha^3).
\end{equation}

By \eqref{A_m} and \eqref{B_m}, we have
\begin{equation}\label{appendix_AA}
\begin{split}
\left\langle A_{m_1}\overline{A}_{m_2}\right\rangle_n=
&~q^4(q-1)^2\langle u_1(\sigma^{q^{m_1}})\overline{u_1(\sigma^{q^{m_2}})}\rangle_n
  +q^4(q+1)^2\langle{\sigma}^{q^{m_1}-q^{m_2}}\rangle_n\\
&-q^4(q^2-1)\langle u_1(\sigma^{q^{m_1}}){\sigma}^{-q^{m_2}}+\overline{u_1(\sigma^{q^{m_2}})}{\sigma}^{q^{m_1}}\rangle_n.
\end{split}
\end{equation}

Since $0\leq m_1,m_2\leq n-1$, it follows that $m_1-m_2=kn$ for $k\in\Z$ if and only if $m_1=m_2$. By Lemma \ref{lemma_Appendix}(3), we have
\begin{equation} \label{sigma_sigma}                
\left\langle{\sigma}^{q^{m_1}-q^{m_2}}\right\rangle_n=
\left\{                         
\begin{array}{ll}               
1 &~~~~\text{if}~m_1=m_2, \\             
0 &~~~~\text{otherwise}.    
\end{array}                     
\right.                         
\end{equation}
This means that
\begin{equation}\label{ingre-1}
\sum_{0\leq m_1,m_2\leq n-1}\langle \sigma^{q^{m_1}-q^{m_2}}\rangle_n=n.
\end{equation}

Similarly, by Lemma \ref{lemma_Appendix}(3), we have
\begin{equation}\label{u_sigma}
\begin{split}
&{\left\langle u_1(\sigma^{q^{m_1}})\sigma^{-q^{m_2}}\right\rangle_n=
 \sum_{k=0}^{+\infty}\frac{\langle\sigma^{q^{k+m_1}-q^{m_2}}\rangle_n}{q^k}}\\
=&~
\left\{
\begin{array}{ll}
\sum_{k=0}^{+\infty}\frac{1}{q^{n-(m_1-m_2)+kn}}=\frac{q^{m_1-m_2}}{q^n-1} &~~~~\text{if}~m_1>m_2,\\
\sum_{k=0}^{+\infty}\frac{1}{q^{m_2-m_1+kn}}=\frac{q^{n-(m_2-m_1)}}{q^n-1} &~~~~\text{if}~m_1\leq m_2.
\end{array}
\right.
\end{split}
\end{equation}
This means that
\begin{equation}\label{ingre-2}
\begin{split}
& \sum_{0\leq m_1,m_2\leq n-1}\langle u_1(\sigma^{q^{m_1}})\sigma^{-q^{m_2}} \rangle_n
=\sum_{0\leq m_2<m_1\leq n-1}\frac{q^{m_1-m_2}}{q^n-1}
  + \sum_{0\leq m_1\leq m_2\leq n-1}\frac{q^{n-(m_2-m_1)}}{q^n-1}\\
&~ =\frac{n}{q^n-1}(q+q^2+\cdots+q^n)=\frac{nq}{q-1}.
\end{split}
\end{equation}

Moreover, by Lemma \ref{lemma_Appendix}(3), we have
\begin{equation}\label{sigma-2}
\begin{split}
&{\left\langle u_1(\sigma^{q^{m_1}})\overline{u_1(\sigma^{q^{m_2}})}\right\rangle_n=
 \sum_{k_1=0}^{+\infty}\sum_{k_2=0}^{+\infty}\frac{\langle\sigma^{q^{k_1+m_1}-q^{k_2+m_2}}\rangle_n}{q^{k_1+k_2}}}\\
=&~
\left\{
\begin{array}{ll}
(\frac{1}{q^{m_1-m_2}}+\frac{1}{q^{n-(m_1-m_2)}})\frac{q^{2+n}}{(q^2-1)(q^n-1)} &~~~~\text{if}~m_1>m_2,\\
(\frac{1}{q^{m_2-m_1}}+\frac{1}{q^{n-(m_2-m_1)}})\frac{q^{2+n}}{(q^2-1)(q^n-1)} &~~~~\text{if}~m_1\leq m_2.
\end{array}
\right.
\end{split}
\end{equation}
This means that (similar to the reduction process of \eqref{ingre-2})
\begin{equation}\label{ingre-3}
\sum_{0\leq m_1,m_2\leq n-1}\langle u_1(\sigma^{q^{m_1}})\overline{u_1(\sigma^{q^{m_2}})} \rangle_n
=\frac{nq^2}{(q-1)^2}.
\end{equation}

By substituting \eqref{ingre-1}, \eqref{ingre-2} and \eqref{ingre-3} into \eqref{appendix_AA}, we have
\begin{equation}\label{simp-1}
\sum_{0\leq m_1,m_2\leq n-1}\langle A_{m_1}\overline{A}_{m_2}\rangle_n=nq^4.
\end{equation}
By \eqref{appendix_ave} and \eqref{simp-1}, it follows that \eqref{appendix-reduce} holds. The proof of \eqref{appendix} is complete.


\bibliography{References}

\end{document}